\documentclass[11pt]{amsart}
\usepackage[margin=2.75cm]{geometry}

\usepackage[noend,boxruled]{algorithm2e}
\SetKwFor{For}{\normalfont{for}}{:}{endfor}

\usepackage[english]{babel}
\usepackage{appendix}
\usepackage{amsmath}
\usepackage{amsfonts}
\usepackage{amssymb}
\usepackage{amsthm}
\usepackage{marginnote}
\usepackage{stmaryrd}
\usepackage{enumitem}
\usepackage[english]{babel}
\usepackage{yfonts}
\usepackage[T1]{fontenc}
\usepackage[utf8x]{inputenc}
\usepackage[normalem]{ulem}

\usepackage[numbers]{natbib}

\usepackage[backref=page]{hyperref}
\hypersetup{
  colorlinks   = true,          
  urlcolor     = red,          
  linkcolor    = blue,          
  citecolor   = violet             
}

\usepackage{calrsfs}
\DeclareMathAlphabet{\pazocal}{OMS}{zplm}{m}{n}

  \makeatletter
\def\@tocline#1#2#3#4#5#6#7{\relax
  \ifnum #1>\c@tocdepth 
  \else
    \par \addpenalty\@secpenalty\addvspace{#2}%
    \begingroup \hyphenpenalty\@M
    \@ifempty{#4}{%
      \@tempdima\csname r@tocindent\number#1\endcsname\relax
    }{%
      \@tempdima#4\relax
    }%
    \parindent\z@ \leftskip#3\relax \advance\leftskip\@tempdima\relax
    \rightskip\@pnumwidth plus4em \parfillskip-\@pnumwidth
    #5\leavevmode\hskip-\@tempdima
      \ifcase #1
       \or\or \hskip 1em \or \hskip 2em \else \hskip 3em \fi%
      #6\nobreak\relax
    \dotfill\hbox to\@pnumwidth{\@tocpagenum{#7}}\par
    \nobreak
    \endgroup
  \fi}
\makeatother

\usepackage{verbatim}
\usepackage{graphicx}
\usepackage{verbatim}
\usepackage{faktor}
\usepackage{xcolor}
\usepackage{xfrac}
\usepackage{tikz,tikz-cd}
\usetikzlibrary{decorations.pathmorphing,decorations.pathreplacing,patterns,arrows.meta,shapes.misc}

\definecolor{red}{rgb}{1,0,0}

\usepackage[all]{xy}
\usepackage{bbm}
\usepackage{tabularx}
\usepackage{longtable}
\usepackage{tabu}
\usepackage{booktabs}
\usepackage{mathtools}

\usepackage[]{textcomp}
\usepackage[sups]{Baskervaldx}
\usepackage{cabin}
\usepackage[varqu,varl]{inconsolata}
\usepackage[baskervaldx,bigdelims,vvarbb]{newtxmath}
\usepackage[cal=cm]{mathalfa}

\DeclareRobustCommand{\plE}{\text{\reflectbox{$\exists$}}}
\newcommand{\plC}{\scalebox{0.8}[1.3]{$\sqsubset$}}

\newcommand{\lcm}{\operatorname{lcm}}

\def\Yagraph{\tikz[baseline=-3pt,scale=.8]{
\draw (2,0) -- (0,1) (2,0) -- (0,.5) (2,0) -- (0,-1);
\draw (2,0) circle(2pt)[fill=black];
\draw [->] (2,0) -- (2.7,0);
\draw (2.6,0) node[right]{\tiny{$x_k$}};
\draw (0,1) circle(2pt)[fill=white];
\draw (0,.5) circle(2pt)[fill=black];
\draw (0,-.25) node{$\vdots$};
\draw (0,-1) circle(2pt)[fill=black];
\draw [blue,fill=blue] (0,-2) circle[radius=2pt];
\draw [->,blue,thick] (0,-2) -- (3,-2);
\draw [blue,fill=blue] (2,-2) circle[radius=2pt];
\draw [->,blue] (1,-0.9) -- (1,-1.6);

\draw [blue] (2,-2) node[above]{\tiny$\diamond$};
\draw [blue] (0,-2) node[above]{\tiny$0$};

\draw (2.05,0) node[above]{\tiny{$\sqC_0$}};
\draw (0,1) node[left]{\tiny{$\sqC_1$}};
\draw (0,.5) node[left]{\tiny{$\sqC_2$}};
\draw (0,-1) node[left]{\tiny{$\sqC_r$}};
}}

\def\Ybgraph{\tikz[baseline=-3pt,scale=.8]{
\draw (2,0) to[out=120,in=0] (0,1) (2,0) -- (0,1) (2,0) -- (0,.5) (2,0) -- (0,-1);
\draw (2,0) circle(2pt)[fill=black];
\draw [->] (2,0) -- (2.7,0);
\draw (2.6,0) node[right]{\tiny{$x_k$}};
\draw (0,1) circle(2pt)[fill=black];
\draw (0,.5) circle(2pt)[fill=black];
\draw (0,-.25) node{$\vdots$};
\draw (0,-1) circle(2pt)[fill=black];
\draw [blue,fill=blue] (0,-2) circle[radius=2pt];
\draw [->,blue,thick] (0,-2) -- (3,-2);
\draw [blue,fill=blue] (2,-2) circle[radius=2pt];
\draw [->,blue] (1,-0.9) -- (1,-1.6);
\draw [blue] (2,-2) node[above]{\tiny$\diamond$};
\draw [blue] (0,-2) node[above]{\tiny$0$};

\draw (2.05,0) node[above]{\tiny{$\sqC_0$}};
\draw (0,1) node[left]{\tiny{$\sqC_1$}};
\draw (0,.5) node[left]{\tiny{$\sqC_3$}};
\draw (0,-1) node[left]{\tiny{$\sqC_r$}};
}}

\def\Ycgraph{\tikz[baseline=-3pt,scale=.8]{
\draw (2,0) -- (0,1) (2,0) -- (0,.5) (2,0) -- (0,-1);
\draw [->] (2,0) -- (2.7,0);
\draw (2.6,0) node[right]{\tiny{$x_k$}};
\draw (2,0) circle(2pt)[fill=white];
\draw (0,1) circle(2pt)[fill=black];
\draw (0,.5) circle(2pt)[fill=black];
\draw (0,-.25) node{$\vdots$};
\draw (0,-1) circle(2pt)[fill=black];
\draw [blue,fill=blue] (0,-2) circle[radius=2pt];
\draw [->,blue,thick] (0,-2) -- (3,-2);
\draw [blue,fill=blue] (2,-2) circle[radius=2pt];
\draw [blue] (2,-2) node[above]{\tiny$\diamond$};
\draw [blue] (0,-2) node[above]{\tiny$0$};
\draw [->,blue] (1,-0.9) -- (1,-1.6);

\draw (2.05,0) node[above]{\tiny{$\sqC_0$}};
\draw (0,1) node[left]{\tiny{$\sqC_1$}};
\draw (0,.5) node[left]{\tiny{$\sqC_2$}};
\draw (0,-1) node[left]{\tiny{$\sqC_r$}};
}}

\newcommand{\sqC}{\scalebox{0.8}[1.3]{$\sqsubset$}}
\newcommand{\Kim}{\operatorname{Kim}}

\newcommand{\M}[4]{\overline{\mathcal{M}}_{#1,#2}(#3,#4)}

\newcommand{\MLog}{\overline{\mathcal{M}}^{\operatorname{log}}}

\newcommand{\PP}{\mathbb P}
\newcommand{\Z}{\mathbb{Z}}
\newcommand{\VZ}{\pazocal{V\!Z}}

\newcommand{\st}{\star}

\newcommand{\N}{\mathbb{N}}
\newcommand{\OO}{\mathcal{O}}
\renewcommand{\to}{\rightarrow}

\newcommand{\Aaff}{\mathbb{A}}

\newcommand{\Gm}{\mathbb{G}_{\text{m}}}
\newcommand{\virt}[1]{[#1]^{\operatorname{virt}}}

\newcommand{\CC}{\mathbb{C}}

\newcommand{\Glog}{\mathbb{G}_{\mathrm{log}}}

\newcommand{\bq}{\begin{equation}}
\newcommand{\eq}{\end{equation}}
\newcommand{\ba}{\begin{aligned}}
\newcommand{\ea}{\end{aligned}}
\newcommand{\be}{\begin{enumerate}}
\newcommand{\ee}{\end{enumerate}}
\newcommand{\bsm}{\left(\begin{smallmatrix}}
\newcommand{\esm}{\end{smallmatrix}\right)}                   
\newcommand{\bpm}{\begin{pmatrix}}
\newcommand{\epm}{\end{pmatrix}}
\newcommand{\barr}{\begin{displaymath}\begin{array}{cccc}}
\newcommand{\earr}{\end{array}\end{displaymath}}
\newcommand{\barrl}{\begin{displaymath}\begin{array}{lcl}}
\newcommand{\earrl}{\end{array}\end{displaymath}}
\newcommand{\barl}{\begin{displaymath}\begin{array}{l}}
\newcommand{\earl}{\end{array}\end{displaymath}}
\newcommand{\bxym}{ \begin{displaymath}\xymatrix }
\newcommand{\exym}{\end{displaymath}}
\newcommand{\bcd}{\begin{center}\begin{tikzcd}}
\newcommand{\ecd}{\end{tikzcd}\end{center}}

\newcommand{\ev}{\operatorname{ev}}
\newcommand{\fgt}{\operatorname{fgt}}

\newcommand{\Mcal}{\mathcal{M}}
\newcommand{\Dcal}{\mathcal{D}}
\newcommand{\Ecal}{\mathcal{E}}

\newcommand{\Fcal}{\mathcal{F}}
\newcommand{\Lcal}{\mathcal{L}}

\newcommand{\cchern}{\mathrm{c}}
\newcommand{\ol}[1]{\overline{#1}}

\newcommand{\RR}{\mathbb{R}}

\DeclareMathOperator{\spec}{Spec}

\theoremstyle{definition}
\newtheorem{thm}{Theorem}[section]

\newtheorem{lemma}[thm]{Lemma}
\newtheorem{prop}[thm]{Proposition}
\newtheorem{cor}[thm]{Corollary}
\newtheorem*{teo*}{Theorem}

\newtheorem{notation}[thm]{Notation}

\newenvironment{customthm}[1]
  {\innercustomthm}
  {\endinnercustomthm}

\theoremstyle{definition}
\newtheorem{example}[thm]{Example}

\newtheorem{dfn}[thm]{Definition}
\newtheorem{definition}[thm]{Definition}

\newtheorem{remark}[thm]{Remark}

\newtheorem*{rem}{Remark}
\newtheorem*{aside*}{Aside}

\newlist{steps}{enumerate}{1}
\setlist[steps, 1]{label = Step (\arabic*):}

\setcounter{tocdepth}{1}

\newcommand{\thismonth}{\ifcase\month 
  \or January\or February\or March\or April\or May\or June%
  \or July\or August\or September\or October\or November%
  \or December\fi}

\title[Curve counting in genus one: elliptic singularities {\it \&}  relative geometry]{Curve counting in genus one: \\ elliptic singularities {\it \&} relative geometry}
\author{Luca Battistella, Navid Nabijou and Dhruv Ranganathan}

\begin{document}

\begin{abstract}
We construct and study the reduced, relative, genus one Gromov--Witten theory of very ample pairs. These invariants form the principal component contribution to relative Gromov--Witten theory in genus one and are relative versions of Zinger's reduced Gromov--Witten invariants. We relate the relative and absolute theories by degeneration of the tangency conditions, and the resulting formulas generalise a well-known recursive calculation scheme put forward by Gathmann in genus zero. The geometric input is a desingularisation of the principal component of the moduli space of genus one logarithmic stable maps to a very ample pair, using the geometry of elliptic singularities. Our study passes through general techniques for calculating integrals on logarithmic blowups of moduli spaces of stable maps, which may be of independent interest.
\end{abstract}

\maketitle

\appendixtitletocoff
\tableofcontents

\setcounter{section}{0}

\section*{Introduction}

\footnotetext{\emph{2020 Mathematics Subject Classification.} Primary: 14N35. Secondary: 14H10.\\ \emph{Keywords:} enumerative geometry, logarithmic Gromov--Witten theory, elliptic singularities, quantum Lefschetz.}

\noindent The Kontsevich space of stable maps from genus zero curves to projective space exhibits remarkable geometry -- it is a smooth orbifold with a ``self-similar'' normal crossings boundary. A combination of these facts yields a very satisfactory understanding of Gromov--Witten theory in genus zero. 

The higher genus situation is more delicate, and substantial theory has been developed in recent years to treat the genus one case, both for its intrinsic geometry, and for higher genus insights. This began over a decade ago with pioneering work of Li, Vakil, and Zinger~\cite{redgone,VZ,LZ,lz2,zingerstvsred,zingred}, and was recently reinvigorated using ideas from singularity theory~\cite{BCM18,HL,SMY1,VISC} and logarithmic geometry~\cite{RSPW,RSPW2,BC20}. The result is a \textbf{reduced} Gromov--Witten theory, which removes degenerate contributions from the ordinary theory. Fundamental ingredients in Gromov--Witten theory are torus localisation, relative stable maps, and degeneration formulas. While the Atiyah--Bott localisation for the reduced invariants was used to great effect by Zinger in his proof of the BCOV conjectures, the remaining components -- relative invariants and the degeneration formula -- have not been developed in this setting. Our work provides these as new {structural} tools in reduced genus one Gromov--Witten theory\footnote{Further motivation to consider the genus one case specifically comes simply from the fact that for Calabi--Yau geometries in dimension larger than $3$, Gromov--Witten theory vanishes in genera greater than one.}. 

The essential starting point for our relative theory is the early work on relative Gromov--Witten theory by Gathmann. However, modern tools are required in our setting. The conceptual framework and the degeneration formalism itself are obtained by combining tropical geometry and logarithmic Gromov--Witten theory, as developed by Abramovich--Chen--Gross--Siebert, with the recent understanding of elliptic singularities and their interactions with logarithmic structures. The latter sections of this paper are devoted to demonstrating that the new ingredients can be combined in a concrete fashion, using tautological vector bundles and their Chern classes.

\subsection{Results} Our main construction yields a \textbf{reduced relative} Gromov--Witten theory for elliptic curves in $\mathbb P^m$ with prescribed tangency along a hyperplane, and analogous invariants for smooth hyperplane section pairs $(X,Y)$. The reduced relative theory differs from the relative theory in a parallel fashion to how Li--Zinger's reduced Gromov--Witten theory differs from the standard one. 

The geometric content is the following theorem, which provides a smooth and proper moduli space compactifying the space of elliptic curves in $\mathbb P^m$ with prescribed contact order data. Let $\mathcal M_{1,\alpha}^\circ(\mathbb P^m|H,d)$ be the moduli space of positive degree maps from smooth curves of genus one to $\mathbb P^m$ with dimensionally transverse contact order $\alpha\in\mathbb Z^n_{\geq 0}$ along $H$. A precise version of the following theorem requires tropical and logarithmic language, and is therefore stated precisely in \S \ref{section construction}.

\begin{customthm}{A}\label{thm: desingularization}
There exists a proper Deligne--Mumford stack $\VZ_{1,\alpha}(\mathbb P^m|H,d)$ that is logarithmically smooth of expected dimension, and contains $\mathcal M_{1,\alpha}^\circ(\mathbb P^m|H,d)$  as a dense open subset. A point in this moduli space determines commutative diagrams of maps out of genus one curves:
\[
\begin{tikzcd}
C\arrow{r} & \overline C_1\arrow{d}\arrow{r} & \overline C_2\arrow{d} \\ 
& \mathbb P^m[s]\arrow{r} & \mathbb P^m,
\end{tikzcd}
\]
where $C$ is nodal, $\overline C_1$ and $\overline C_2$ are Gorenstein, $C\to \PP^m[s]$ is a logarithmic map to an $s$-fold expansion of $\PP^m$ along $H$, and the vertical maps do not contract any subcurve of genus one.
\end{customthm}



The information involving singularities adds substantial complexity to the theory, and this motivates our next result, which shows that calculations are nonetheless possible. 

\begin{customthm}{B}\label{thm: recursion}
Given a smooth pair $(X,Y)$ with $Y$ very ample, there is an explicit recursive algorithm to calculate
\begin{enumerate}
\item the (restricted) reduced genus one Gromov--Witten invariants of $Y$;
\item the reduced genus one relative Gromov--Witten invariants of $(X,Y)$; 
\item the (restricted) reduced genus one rubber invariants of $\mathbb{P}=\PP_Y(\operatorname{N}_{Y|X} \oplus \OO_Y)$
\end{enumerate}
from the absolute genus zero and absolute reduced genus one invariants of $X$.
\end{customthm}

A conceptual consequence is that the absolute and relative reduced genus one theories, together with the genus zero theory, form a \textit{self-contained} system of invariants, i.e. the non-reduced relative theory is not required to perform a degeneration computation in the reduced theory. 

The reconstruction algorithm generalises that of Gathmann~\cite{Ga}. A higher genus reconstruction for the ordinary theory is given by Maulik and Pandharipande~\cite{MaulikPandharipande}, but follows a different strategy; localisation is never used in the analysis here, nor in that of Gathmann. 

We pursue a strategy laid out by Vakil and Caporaso--Harris~\cite{CH98,Vre}. We express the locus of maps with degenerate tangency orders in terms of Chern classes of tautological bundles, and then describe that locus in terms of moduli spaces with smaller invariants. Novelties are introduced in both steps. In the first, we describe the locus of degenerate maps as the zero locus of a logarithmic line bundle, arising from a piecewise linear function with a tropical moduli theoretic description. The use of tropical techniques simplifies Gathmann's calculations.

For the second step, we contend with the interaction of the relative splitting formula with the structure of elliptic singularities, which is new (\S \ref{subsection C0 splitting}). The factorisation condition -- central to the constructions in~\cite{RSPW,RSPW2} -- is expressed as a tautological class, leading to Theorem~\ref{thm: recursion}.

\subsection{Context and techniques} The present work is part of a larger ongoing program relating to the role of curve singularities and alternate compactifications in Gromov--Witten theory that go beyond the well-studied geometry of stable maps from nodal curves~\cite{BC20,BCM18,Boz19,HLN,RSPW,RSPW2}. The motivation for this direction is that such alternative curve counting theories can often be more {enumerative} in nature and satisfy Lefschetz section theorems. Degeneration methods are a crucial tool in Gromov--Witten theory, and our results point to a fruitful use of these methods in this direction. 

The computations performed in this paper may be of use more broadly in logarithmic Gromov--Witten theory. Degenerate moduli spaces are constructed from simpler spaces of maps, but the geometry is substantially more delicate than the standard situation, where the strata are simply fibre products. First, one must describe the moduli space of maps out of elliptic singular curves in terms of maps out of the branches of this curves. We describe this in terms of natural tautological classes. Second, the degenerate strata of moduli spaces are not simply fibre products of spaces with smaller numerical invariants, but compactified torus bundles over these strata. The latter phenomenon is a constant presence in logarithmic Gromov--Witten theory~\cite{AW,R19}. The text provides a model procedure that uses tropical geometry to calculate with these torus bundles. 

A key step in our recursion is a description of the locus of maps with higher than prescribed tangency, which was identified by Gathmann. We realise the locus as the vanishing locus of a section of a line bundle that comes from tropical geometry — from a piecewise linear function on the fan/tropicalization. The systematic understanding of logarithmic line bundles arising in this fashion is expected to play an important role in logarithmic enumerative geometry. 

The geometric ideas in Gathmann's recursion are clear, but are implemented in the space of stable maps, and do not formally fit within the logarithmic context On the way to our main results, we reinterpret Gathmann's idea of increasing tangency in logarithmic Gromov--Witten theory. 


\subsection{Future directions} There are two natural directions moving forward -- generalisations on the curve side and on the target side. The results of~\cite{RSPW} suggest the existence of a reduced higher genus Gromov--Witten theory formed by replacing contracted elliptic components with singularities, and this is reaffirmed by recent work of Bozlee~\cite{Boz19}. More general singularities are introduced and studied in work of Battistella--Carocci~\cite{BC20}. We expect the relative geometry and splitting techniques studied here to be useful in controlling these more exotic invariants.


On the target side, it would be natural to generalise the method of degenerating tangency conditions to higher rank logarithmic structures. The techniques developed in \cite{MaximalContacts} are expected to play a role. When the target is a toric pair the problem is tied in with the enumerative geometry of well-spaced tropical curves~\cite{LR18,RSPW2,Speyer}. 

\subsection*{Acknowledgements} We have benefited from fruitful conversations with friends and colleagues, including Pierrick Bousseau, Francesca Carocci, Cristina Manolache, Davesh Maulik, Keli Santos-Parker, and Jonathan Wise. The authors worked on this project during visits to MPIM-Bonn, MIT, University of Cambridge, and the University of Glasgow, and thank these institutions for ideal working conditions and support. The text has been improved substantially by the careful reading and comments of a referee, to whom we are very grateful. 

\subsection*{Funding} L.B. and N.N. would in addition like to thank Imperial College London, the LSGNT, the Royal Society, 3CinG, and the MSRI program ``Enumerative Geometry beyond Numbers''. N.N. was partially supported by EPSRC grant EP/R009325/1.  L.B. is supported by the Deutsche Forschungsgemeinschaft (DFG, German Research Foundation) under Germany’s Excellence Strategy EXC-2181/1 - 390900948 (the Heidelberg STRUCTURES Cluster of Excellence).


\section{Constructions and logarithmic smoothness}\label{section construction}
\noindent We review the basic tropical structures in the theory of logarithmic curves and maps. A detailed treatment is given by~\cite[Section~3]{CavalieriChanUlirschWise}; the following paragraphs are meant as a reminder. 

\subsection{Curves and tropical curves} A nodal curve $C$ over $\spec \mathbb C$ has an associated dual graph $\Gamma(C)$. Given a toric monoid $P$, there is a logarithmic point $\spec (P \to \CC)$ which may be thought of as remembering the information of the embedding of the torus-fixed closed point into the toric variety $\spec \mathbb C[P]$. For a logarithmically smooth curve $C$ over $\spec (P \to \CC)$, the dual graph $\Gamma(C)$ is enhanced: given an edge $e$ of $\Gamma(C)$, the logarithmic structure keeps track of a generalised ``edge length'' $\ell_e$, which is an element of $P$. These data can be repackaged into a cone complex $\plC$ together with a map to the dual cone $\sigma_P$
\[
\pi: \plC\to \sigma_P.
\]
A fibre of $\pi$ is a metric space enhancing the topological space $\Gamma(C)$, with edges given lengths in $\RR_{\geq 0}$. Over the interior of the cone $\sigma_P$ these edge lengths are nonzero, and the fibres are homoemorphic to $\Gamma(C)$. Over the faces of $\sigma_P$, the fibres are viewed as edge contractions of $\Gamma(C)$. 

The discussion is globalized in~\cite{CavalieriChanUlirschWise}, and the construction can be carried out for the universal curve over the moduli space of logarithmic curves. There is a stack $\mathfrak M_{g,n}^{\mathrm{trop}}$ over the category of cone complexes equipped with a universal curve 
\[
\plC\to \mathfrak M_{g,n}^{\mathrm{trop}}.
\]
It is common to view $\mathfrak M_{g,n}^{\mathrm{trop}}$ as playing the same role for $\mathfrak M_{g,n}$ as a fan plays for a toric variety. Two specific such roles are relevant:
\begin{enumerate}
\item A subdivision of $\mathfrak M_{g,n}^{\mathrm{trop}}$ gives rise to a birational modification of $\mathfrak M_{g,n}$.
\item A piecewise linear function on $\mathfrak M_{g,n}^{\mathrm{trop}}$ furnishes a Cartier divisor on $\mathfrak M_{g,n}$ with a section.
\end{enumerate}

\subsection{Maps and tropical maps}The discussion carries over to the setting of logarithmic maps. Consider a logarithmic curve $C$ and a logarithmic map to a smooth pair $(X,Y)$ over a logarithmic scheme $S$, with $Y$ connected. For simplicity, assume that $S$ is a $P$-logarithmic point, as above. There is an associated \textbf{tropicalization}, i.e. a family of tropical curves $\plC \to \sigma_P$, together with a map of cone complexes
\[
\plC\to \RR_{\geq 0}.
\]
Here $\RR_{\geq 0}$ is the tropicalization (or ``fan'') of the smooth pair $(X,Y)$. A fibre over $\sigma_P$ encodes a metric enhancement of the dual graph, together with a piecewise linear map to $\RR_{\geq 0}$. There is again a cone stack representing the moduli problem of tropical maps; the construction differs cosmetically from the one presented in~\cite{CavalieriChanUlirschWise}, and we leave the details to the reader; see also~\cite{GrossSiebertLog}.

\subsection{Stable maps and elliptic singularities} We provide a summary of the approach of~\cite{RSPW}. If $C$ is a Gorenstein curve of genus one and $f:C\to \mathbb P^m$ is a morphism, the ampleness of the tangent bundle of $\mathbb P^m$ ensures that if $f$ is non-constant on the minimal subcurve of $C$ of arithmetic genus one, then the deformations of $f$ are unobstructed; in fact, the deformations \textit{over the stack of curves} are also unobstructed. 

It is natural to attempt to resolve the moduli space of stable maps by contracting the genus one subcurve on which the map is constant. Whilst the moduli space of Gorenstein genus one curves is not smooth, a more sophisticated moduli problem, consisting of nodal curves $C$ together with a \textit{chosen contraction} to a Gorenstein curve $\bar{C}$, might be expected to resolve these singularities. The formation of this moduli problem requires logarithmic geometry, and furnishes a moduli space with a universal \textit{nodal curve}, universal \textit{elliptically singular} curves, and contractions from the former to the latter~\cite[Section~3]{RSPW}. Once constructed, the corresponding moduli of stable maps out of a Gorenstein curve is also resolved. Maps from nodal genus one curves may have stable limits that contract genus one subcurves, but the introduction of Gorenstein singularities -- with an appropriate stability condition -- ensures properness without allowing contracted genus one components. 

The main target space in the present work is $(\mathbb P^m,H)$, and ideas parallel to those above can be expected to apply. However, the deformation theory of logarithmic maps is more delicate: the Euler sequence for this pair shows that the logarithmic tangent bundle is a quotient of $\mathcal O\oplus\mathcal O(1)^{\oplus m}$, and additional obstructions appear due to the trivial factor. 

\subsection{Singularities and moduli of attachments}\label{S:ellsing} Isolated Gorenstein singularities of genus one were classified by Smyth~\cite{SMY1}: for every fixed number of branches $m$, there is a unique isomorphism type of curve germ. For $m=1$ and $2$ these are the cusp ($A_2$) and tacnode ($A_3$) respectively; for $m\geq 3$ they are the union of $m$ general lines through the origin of $\Aaff^{m-1}$ -- the \emph{elliptic $m$-fold points}.

The problem of constructing an elliptic singularity from its pointed normalisation is delicate; it was studied in van der Wyck's thesis \cite{vdW}. Recall that the seminormalisation $C^\text{sn}$ of a curve $C$ has the same underlying topological space, but its singularities consist of rational $m$-fold points only, i.e. they are locally isomorphic to the union of the axes in $\Aaff^m$.

\begin{lemma}\label{lem:pushout}
 The seminormalisation $\nu\colon C^\text{sn}\to C$ of an elliptic $m$-fold point is the collapse of a general tangent vector $v$ at the rational $m$-fold point.
\end{lemma}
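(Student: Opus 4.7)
The statement is local at the singular point, so I would work with completed local rings throughout. By Smyth's classification, the rational $m$-fold point has completed local ring
$$A^{\mathrm{sn}} = \{(f_1,\ldots,f_m) \in \textstyle\prod_{i=1}^m \CC[[t_i]] : f_1(0) = \cdots = f_m(0)\},$$
and a tangent vector at its singular point has the form $v = \sum_i \beta_i\,\partial_{t_i}|_0$. Such a $v$ is \emph{general} -- in the sense that it lies outside the branch tangent directions -- precisely when every $\beta_i$ is nonzero.

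The first step is to realise the ``collapse of $v$'' as a pushout, or equivalently as a fibre product on coordinate rings. The tangent vector $v$ determines a ring map $A^{\mathrm{sn}} \to \CC[\epsilon]/\epsilon^2$ sending $a$ to $a(0) + v(a)\epsilon$. The pushout of $\Spec\CC \leftarrow \Spec\CC[\epsilon]/\epsilon^2 \to \Spec A^{\mathrm{sn}}$ then has coordinate ring
$$A^{\mathrm{sn}} \times_{\CC[\epsilon]/\epsilon^2} \CC \;=\; \{a \in A^{\mathrm{sn}} : v(a) = 0\} \;=\; \{(f_i) \in A^{\mathrm{sn}} : \textstyle\sum_i \beta_i f_i'(0) = 0\}.$$
It remains to match this subring with the local ring $A$ of the elliptic $m$-fold point, for an appropriately chosen $v$.

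The heart of the argument is the following explicit description, for $m \geq 3$. Realise the elliptic $m$-fold point as the union of $m$ general lines $\ell_i$ through the origin of $\Aaff^{m-1}$, with direction vectors $u_i = (c_{ij})_j$ satisfying a unique (up to scalar) linear relation $\sum_i \alpha_i u_i = 0$, necessarily with all $\alpha_i\neq 0$. A polynomial $f = \sum_j a_j x_j + O(|x|^2)$ restricts to $\ell_i$ with first-order coefficient $\sum_j a_j c_{ij}$, and the relation forces the compatibility $\sum_i \alpha_i f_i'(0) = 0$ on the restrictions. The main technical step -- and in my view the principal obstacle -- is to show that no further conditions arise at higher orders: one must check that the evaluation map $\operatorname{Sym}^k(\CC^{m-1}) \to \CC^m$ sending $x_{j_1}\cdots x_{j_k}$ to $(c_{i,j_1}\cdots c_{i,j_k})_i$ is surjective for every $k\geq 2$, a Vandermonde-type assertion flowing from the general position of the lines. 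Granting this, $A$ equals the subring $\{(f_i) \in A^{\mathrm{sn}} : \sum_i \alpha_i f_i'(0) = 0\}$, matching the pushout for $v = \sum_i \alpha_i\,\partial_{t_i}|_0$, which is general since all $\alpha_i$ are nonzero.

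Finally, I would settle the low-dimensional cases $m=1$ (cusp) and $m=2$ (tacnode) by the same scheme: the seminormalisation is smooth or an ordinary node respectively, and the Vandermonde step is trivial. Direct computation then verifies that $\CC[[t^2,t^3]]$ is cut out of $\CC[[t]]$ by the collapse of $\partial_t$, and that the tacnode local ring $\{(f_1,f_2) : f_1(0)=f_2(0),\, f_1'(0)=f_2'(0)\}$ is cut out of the nodal local ring by the collapse of $\partial_{t_1}-\partial_{t_2}$.
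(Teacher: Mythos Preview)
Your argument is correct and considerably more explicit than the paper's treatment. The paper does not give a direct proof: it simply records that the statement is equivalent to \cite[Lemma~2.2]{SMY1}, where Smyth characterises the elliptic $m$-fold point via the datum of a generic hyperplane in the Zariski cotangent space of the rational $m$-fold point --- dual to your tangent vector $v$. You instead unpack the equivalence by a direct local-ring computation, correctly isolating the key technical point as the surjectivity of $\operatorname{Sym}^k(\CC^{m-1})\to\CC^m$ for $k\geq 2$. This is the statement that $m$ points in linearly general position in $\PP^{m-2}$ impose independent conditions on degree-$k$ forms, and it is immediate once one normalises the points to $e_1,\ldots,e_{m-1}$ and $e_1+\cdots+e_{m-1}$: the values at the $e_i$ pick out the coefficients of $x_i^k$, while the value at the last point is the sum of all coefficients, and for $k\geq 2$ there is a mixed monomial available to adjust the latter independently. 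The paper's citation is terser; your route is self-contained and makes the role of the genericity hypothesis (all $\alpha_i\neq 0$) transparent.
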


\noindent This is equivalent to \cite[Lemma 2.2]{SMY1}, where he describes the seminormalisation of an elliptic singularity as the data of a hyperplane in the Zariski cotangent space not containing the coordinate lines. We use the term \emph{moduli of attaching data} \cite[\S 2.2]{SMY2} for the extra data needed to produce a singularity from its pointed normalisation.

\begin{cor}
 The moduli of attaching data of an elliptic $m$-fold point is isomorphic to $\mathbb G_{\text{m}}^m$.
\end{cor}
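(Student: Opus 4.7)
The plan is to unwind Lemma~\ref{lem:pushout} in explicit coordinates. I would fix $C^\text{sn}$ to be the rational $m$-fold point realised as the union of the coordinate axes in $\Aaff^m$, marked point at the origin and with the standard parametrisation of each branch. By the lemma, the attaching data needed to produce the elliptic $m$-fold point $C$ from this pointed seminormalisation is exactly the choice of a general tangent vector $v$ at the singular point, so the moduli of attaching data is identified with the space of such vectors $v$.

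Next I would compute this space. The Zariski tangent space of $C^\text{sn}$ at the singular point decomposes canonically as the direct sum of the tangent lines of its $m$ branches, giving a canonical isomorphism with $\Aaff^m$ once the branch parametrisations are fixed; under this identification, the coordinate hyperplanes are precisely those containing some branch tangent line. Translating Smyth's ``hyperplane not containing the coordinate lines'' criterion recalled just before Lemma~\ref{lem:pushout}, the generality condition on $v = (v_1, \ldots, v_m)$ becomes simply $v_i \neq 0$ for every $i$, so the set of general tangent vectors is the open torus $(\Gm)^m \subset \Aaff^m$, as claimed.

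The only point that genuinely requires attention is that the parametrisation by $v$ is injective, i.e.\ that no further quotient by a branch-rescaling torus is needed. This holds because the pointed normalisation carries fixed parametrisations on each branch: rescaling $v$ by a nonzero scalar (even branch-wise) genuinely alters the subring $\OO_C \subset \OO_{C^\text{sn}}$ cut out by the extension datum, so distinct tangent vectors yield distinct attaching data. I do not anticipate any substantive obstacle here: the corollary is essentially a coordinate reformulation of the lemma, and the main bookkeeping is matching Smyth's cotangent-hyperplane description with the tangent-vector description supplied by Lemma~\ref{lem:pushout}.
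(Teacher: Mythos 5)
The step you flag as ``the only point that genuinely requires attention'' is exactly where the argument fails. The subring $\OO_C \subseteq \OO_{C^{\text{sn}}}$ cut out by a general tangent vector $v$ consists of those functions whose differential at the singular point annihilates $v$; this condition is linear in $v$ and hence unchanged by the diagonal rescaling $v\mapsto\lambda v$. Your claim that rescaling ``genuinely alters the subring'' is true for generic branch-wise rescalings $(v_1,\ldots,v_m)\mapsto(\lambda_1 v_1,\ldots,\lambda_m v_m)$, but \emph{false} when all the $\lambda_i$ are equal. So the parametrisation by general vectors $v\in(\Gm)^m$ has $\Gm$-fibres and records only the line $[v]\in\PP(T_p C^{\text{sn}})$ --- exactly Smyth's ``general hyperplane in the cotangent space'' description recalled immediately before Lemma~\ref{lem:pushout}.

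Consequently the moduli of attaching data is the complement of the $m$ coordinate hyperplanes in $\PP^{m-1}$, a torsor under $\Gm^{m-1}$ rather than $\Gm^{m}$; this is also what Lemma~\ref{lem:generic_proj_bundle} later records, identifying it with the complement of the coordinate hyperplanes in $\PP\bigl(\bigoplus_i T_{q_i}C_i\bigr)$. A sanity check is $m=1$: for the fixed pointed normalisation $(\Aaff^1,0)$ there is only one cusp, $\CC[t^2,t^3]\subset\CC[t]$, so the moduli is a point rather than a copy of $\Gm$. The exponent in the stated corollary should therefore be $m-1$, and the injectivity claim on which your count of $m$ rests is the genuine gap.
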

\noindent This torus may be compactified by allowing \emph{sprouting} on the boundary: a branch $R_i$ of the singularity is replaced with a strictly semistable $\PP^1$ with the singularity at $0$ and $R_i$ glued nodally at $\infty$.

There are other ways to look at these data; we recall a construction from linear algebra. Let $V=\sum_{i=1}^mL_i$ be a presentation of a vector space as the direct sum of $m$ lines. The datum of a general line $\ell$ in $V$ (or a general hyperplane in $V^\vee$) is equivalent to the data of compatible isomorphisms $\theta_{ij} \colon L_i \cong L_j$. Given a general line $\ell$, a vector $v\in\ell$ can be written as:
\[v=a_1e_1+\ldots+a_me_m,\qquad \text{with $a_i\neq 0$ for all $i$}\]
for a fixed basis $\{e_i\in L_i\}$ of $V$. We can then define $\theta_{ij}$ by
\[\theta_{ij}(e_i)=(a_j/a_i)\cdot e_j\]
and it is easy to see that it does not depend on any of the choices. On the other hand, given the $\theta_{ij}$, the line $\ell$ can be taken to be the graph of the morphism:
\[(\theta_{12},\ldots,\theta_{1m})\colon L_1\to L_2\oplus\ldots\oplus L_m.\]
Thus the attaching data can be viewed as specifying an identification of tangent spaces of the pointed normalisation at the singularity.

Lemma \ref{lem:pushout} yields a characterisation of maps from an elliptic singularity $C$, which is used in \S \ref{section reduced splitting}:
\begin{cor}\label{cor:maps_from_elliptic_sing}
 A map $C\to X$ is the same as a map $C^\text{sn}\to X$ such that its differential sends $v \mapsto 0$.
\end{cor}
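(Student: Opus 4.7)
The plan is to exploit Lemma \ref{lem:pushout} together with the universal property of pushouts. Let $p\colon \Spec k\to C^{\text{sn}}$ be the inclusion of the rational $m$-fold point, and let $v\colon \Spec k[\epsilon]/(\epsilon^2)\to C^{\text{sn}}$ be the chosen general tangent vector at $p$, so that the composition of $v$ with $\Spec k\hookrightarrow \Spec k[\epsilon]/(\epsilon^2)$ is $p$. The content of Lemma \ref{lem:pushout}, interpreted schematically, is that the seminormalisation map $\nu\colon C^{\text{sn}}\to C$ identifies $C$ with the pushout
\[
C \;\cong\; C^{\text{sn}} \sqcup_{\Spec k[\epsilon]/(\epsilon^2)} \Spec k,
\]
where the two maps out of $\Spec k[\epsilon]/(\epsilon^2)$ are $v$ on one side and the reduction $\Spec k[\epsilon]/(\epsilon^2)\twoheadrightarrow \Spec k$ on the other.

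First I would justify that this is indeed a pushout in an appropriate category (schemes, or algebraic stacks if $X$ is a stack). This is essentially a local statement at the $m$-fold point: on the affine chart given by the union of coordinate axes $\bigcup_i L_i \subset \Aaff^m$ with its tangent direction $v$ chosen so that its projection to each $L_i$ is non-zero, collapsing $v$ corresponds at the level of rings to the equaliser $\mathcal O_{C,\bar p} = \{h\in \mathcal O_{C^{\text{sn}},p} : \mathrm d h(v)=0\}$, which is exactly Smyth's presentation of the elliptic $m$-fold point as a subring of its seminormalisation cut out by a single linear form on the Zariski cotangent space. This reduces the pushout claim to a direct ring-theoretic check.

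Granting the pushout description, the corollary follows formally from the universal property. A morphism $C\to X$ is equivalent to a pair $(f,g)$ with $f\colon C^{\text{sn}}\to X$ and $g\colon \Spec k\to X$ such that the two composites $\Spec k[\epsilon]/(\epsilon^2)\to X$ — namely $f\circ v$ and $g$ precomposed with the reduction — coincide. The second composite is always the constant tangent vector at $g(\Spec k)$, while the first is the image $\mathrm d f(v)$ of the chosen tangent vector. Agreement forces $g=f\circ p$ (so $g$ is redundant data, determined by $f$) together with $\mathrm d f(v) = 0$, which is precisely the stated condition.

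The main obstacle is the first step: establishing cleanly that ``collapse of $v$'' really realises $C$ as a pushout of schemes, and that this pushout is preserved under $\Hom(-, X)$ (so that no hypothesis on $X$ is smuggled in). Once this is in place, the second step is a purely formal consequence of the universal property, and there are no further calculations to perform.
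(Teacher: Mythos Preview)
Your proposal is correct and is precisely the argument the paper has in mind: the paper does not spell out a proof, but prefaces the corollary with ``Lemma~\ref{lem:pushout} yields a characterisation of maps from an elliptic singularity $C$'', indicating that the result is an immediate consequence of the pushout description. Your write-up faithfully unpacks this, and your caution about verifying that the pinching really is a categorical pushout is well placed, though standard (this is a Ferrand-type pushout along a closed immersion of affine schemes).
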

\noindent Line bundles on an elliptic $m$-fold point admit a similar description, see e.g. \cite[Lemma 7.5.12]{LiuQ}.
\begin{lemma}\label{lem:PicE}
 If $C$ is a curve of arithmetic genus one with an elliptic $m$-fold point then $\operatorname{Pic}^0(C)\cong\mathbb G_{\text{a}}$.
\end{lemma}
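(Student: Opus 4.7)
The plan is to identify $\Pic^0(C)$ by combining a dimension count with a direct construction of an additive subgroup coming from the elliptic singularity. From $p_a(C)=1$ we obtain $\dim \Pic^0(C) = h^1(C,\mathcal{O}_C) = 1$, so $\Pic^0(C)$ is a one-dimensional connected commutative group scheme, hence isomorphic to $\mathbb{G}_{\text{a}}$, $\mathbb{G}_{\text{m}}$, or an elliptic curve. It therefore suffices to exhibit a closed embedding $\mathbb{G}_{\text{a}} \hookrightarrow \Pic^0(C)$: since there are no nonzero group scheme homomorphisms $\mathbb{G}_{\text{a}} \to \mathbb{G}_{\text{m}}$ and no nonzero $\mathbb{G}_{\text{a}} \to E$ for an elliptic curve $E$, this forces $\Pic^0(C) = \mathbb{G}_{\text{a}}$.

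To produce this subgroup, I would analyse the normalisation $\nu \colon \tilde C \to C$ via the standard short exact sequence
\[
 1 \to \mathcal{O}_C^{\times} \to \nu_{\ast}\mathcal{O}_{\tilde C}^{\times} \to \mathcal{F} \to 1,
\]
where $\mathcal{F}$ is a skyscraper concentrated at the elliptic $m$-fold point $p$. By Lemma~\ref{lem:pushout}, $\mathcal{O}_{C,p}$ is the subring of $\prod_{i=1}^m \Bbbk[\![t_i]\!]$ cut out by the common constant term condition $f_1(0)=\cdots=f_m(0)$ together with the single linear relation $\sum_i \alpha_i f'_i(0)=0$ arising from the collapsed tangent vector $v=\sum_i \alpha_i\partial_{t_i}$. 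Filtering units by the subgroup of principal $1$-units and taking the quotient modulo $\mathcal{O}_{C,p}^{\times}$, one obtains a (split) short exact sequence of algebraic groups
\[
 1 \to \mathbb{G}_{\text{a}} \to \mathcal{F}_p \to \mathbb{G}_{\text{m}}^{m-1} \to 1,
\]
where the $\mathbb{G}_{\text{a}}$ is the image of the derivative pairing $u \mapsto \sum_i \alpha_i u'_i(0)$ on the $1$-units, and the $\mathbb{G}_{\text{m}}^{m-1}$ quotient records constant terms on each branch modulo the overall diagonal.

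The long exact sequence associated to the normalisation sequence then reads
\[
 1 \to \Bbbk^{\times} \to H^0(\tilde C,\mathcal{O}_{\tilde C}^{\times}) \to \mathcal{F}_p \to \Pic(C) \to \Pic(\tilde C) \to 0.
\]
In the minimal setting relevant to the paper, the $m$ branches at $p$ lie on $m$ distinct rational components of $\tilde C$, and $H^0(\tilde C,\mathcal{O}_{\tilde C}^{\times})/\Bbbk^{\times} \cong \mathbb{G}_{\text{m}}^{m-1}$ maps isomorphically onto the $\mathbb{G}_{\text{m}}^{m-1}$ quotient of $\mathcal{F}_p$, via the splitting by constants. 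The residual $\mathbb{G}_{\text{a}}$ therefore injects into $\Pic(C)$, and being connected it lands inside $\Pic^0(C)$, closing the argument.

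The main obstacle is the local unit computation at $p$. One must verify that $\mathcal{F}_p$ is exactly the claimed extension, with no higher-order jets contributing beyond the single linear condition on first derivatives, and that the extension by $\mathbb{G}_{\text{m}}^{m-1}$ is split by constants. Both facts rest on the geometric content of Lemma~\ref{lem:pushout}: the elliptic $m$-fold point differs from its rational $m$-fold seminormalisation by the collapse of one tangent vector $v$, so the only extra data imposed on units of the normalisation is the first-order directional derivative along $v$.
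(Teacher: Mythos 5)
Your argument is correct and essentially complete. The paper does not give a proof of this lemma; it simply defers to \cite[Lemma 7.5.12]{LiuQ}, so there is no ``paper proof'' to compare against. Your approach -- computing $\Pic^0(C)$ via the normalisation exact sequence of sheaves of units and the resulting long exact sequence -- is the standard one and is the same circle of ideas that Liu uses. The key local computation you carry out (that $\mathcal{F}_p$ sits in a split extension $1 \to \mathbb{G}_{\text{a}} \to \mathcal{F}_p \to \mathbb{G}_{\text{m}}^{m-1} \to 1$, with $\mathbb{G}_{\text{a}}$ detected by the functional $u \mapsto \sum_i \alpha_i u_i'(0)$ on $1$-units and the $\mathbb{G}_{\text{m}}^{m-1}$ split by constants) is accurate: multiplicativity of that functional on $1$-units follows from $u_i(0)=1$, and the kernel of the functional coincides with the $1$-units of $\mathcal{O}_{C,p}$, which is exactly the description of the local ring given by Lemma~\ref{lem:pushout}.

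Two small remarks. First, the classification step at the front (that a one-dimensional connected commutative group must be $\mathbb{G}_{\text{a}}$, $\mathbb{G}_{\text{m}}$, or an elliptic curve, forcing $\Pic^0(C)\cong\mathbb{G}_{\text{a}}$ once $\mathbb{G}_{\text{a}}$ embeds) is valid in characteristic $0$ but is actually redundant: your computation pins down $\Pic^0(C)$ exactly as $\mathcal{F}_p/\operatorname{Im}\bigl((\Bbbk^\times)^m/\Bbbk^\times\bigr)$, which the split sequence identifies directly with $\mathbb{G}_{\text{a}}$, since the global constant units hit precisely the splitting section of $\mathbb{G}_{\text{m}}^{m-1}$. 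Second, the phrase ``in the minimal setting relevant to the paper'' silently discards the case where $C$ carries additional rational tails attached by nodes; this is harmless because attaching trees of rational curves at nodes does not change $\Pic^0$, and once $C$ has only the elliptic $m$-fold point as singularity, the arithmetic-genus constraint $p_a(C)=1=1-k+\delta$ with $\delta=m$ forces $\tilde C$ to have exactly $m$ components, each a $\PP^1$. It would be worth stating this reduction explicitly rather than leaving it implicit, but it does not affect the correctness of the proof.
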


\subsection{Absolute geometry: background} Let $\mathfrak M_{1,n}$ be the logarithmic algebraic stack of $n$-pointed prestable curves of genus one. We recall the logarithmic moduli spaces of genus one curves constructed in~\cite[\S 2 \& \S 4]{RSPW}. 

Consider a logarithmically smooth curve $C$ over a logarithmic geometric point $S$ with monoid $P$ and let $\pi\colon \plC\to \sigma_P$ denote its tropicalization. Given a point $t$ in $\sigma_P$ we obtain a metric graph as explained above. Since the curve in question has genus one, there is a minimal genus one subgraph -- either a vertex of genus one or a cycle in the graph. We refer to this as the \textbf{core}, and denote it $\plE$.

Given a point $s$ with $\pi(s)$ equal to $t$, there is a well-defined distance in the metric graph $\pi^{-1}(t)$ from $s$ to the core. 
For each vertex $v$ of the dual graph $\Gamma(C)$, there is a well-defined function $\lambda(v): \sigma_P\to \RR_{\geq 0}$, defined on the base of the tropical family. We view $\boldsymbol{\lambda}$ as a piecewise linear function $\lambda\colon\plC\to\RR_{\geq 0}\times\sigma_P$, see Figure \ref{fig:lambdaexample}. Given any linear function $\delta$ on $\sigma_P$, say that $\delta$ is \textbf{comparable} to $\lambda(v)$ if a global order relation, either $\lambda(v)\leq \delta$ or $\lambda(v)\geq \delta$, that holds at all points of $\sigma_P$. 

Piecewise linear functions on $\plC$ are naturally identified with global sections of the characteristic sheaf $\overline M_C$; linear functions on $\sigma_P$ are naturally identified with global sections of the characteristic sheaf $\overline M_S$, and can be pulled back to $C$. Given a logarithmically smooth curve $C/S$, conditions involving the comparability of piecewise linear functions can be imposed at the level of geometric points; they determine subdivisions of $\sigma_P$, and, in turn, logarithmic modifications of $S$. 

\begin{definition}\label{def: alignment}
A \textbf{centrally aligned curve} is a pair $(C/S,\delta)$, where $\delta\in\overline{M}_S$, such that:
\begin{enumerate}
    \item the section $\delta$ is comparable to $\lambda(v)$ for all vertices $v$ of $\plC$;
    \item for any pair of vertices $v$ and $w$ of distance less than $\delta$ from the core, the sections $\lambda(v)$ and $\lambda(w)$ are comparable.
\end{enumerate}
The order relations among the vertex position functions is referred to as an \textbf{alignment}. 
\end{definition}
For a tropical curve  family $\sqC$, each vertex is either inside, outside, or on the circle of radius $\delta$ regardless of the point on the base; the vertices inside the circle are ordered by distance to the core.

\begin{figure}
 \begin{tikzpicture} 
  \tikzset{cross/.style={cross out, draw=black, fill=none, minimum size=2*(#1-\pgflinewidth), inner sep=0pt, outer sep=0pt}, cross/.default={3pt}}
  
  \foreach \x in {(.5,2.5),(3.3,2.5),(3.65,2.5),(3.7,2.5),(4.4,2.5),(4.65,2.5),(-1.7,2.5)}
  \draw \x node[cross,rotate=45] {};
  
  \foreach \x in {(-1,1),(3.5,1.5),(3.2,2),(-2.5,2.5),(-.5,1.5),(1.5,3.5),(4.8,3),(5.8,3.5),(7,0),(-2,2.8),(4,3.2)}
  \draw[fill=black] \x circle (2pt);
  
  \draw[gray] (0,0) node[left]{$\plE$} -- (1,.2) -- (2,0) -- (1,-.2) -- (0,0); 
  
  \draw (0,0) -- (-2.5,2.5) -- (-2.7,4) (-2.5,2.5) -- (-2.3,4) (-1,1)--(1.5,3.5)--(1.3,4) (1.5,3.5)--(1.7,4) (-.5,1.5) -- (-1.7,2.5) -- (-2,2.8) --(-2.2,4) (-2,2.8) --(-1.8,4);
  \draw (1,.2) -- (4,3.2) -- (3.9,4) (4,3.2) -- (4.1,4);
  \draw (1,-.2) -- (3.2,2) -- (5,4) (3.2,2) -- (5.2,4);
  \draw (2,0) -- (3.5,1.5) -- (4.8,3) -- (5.4,4) (4.8,3) -- (5.6,4) (3.5,1.5) -- (5.8,3.5) -- (5.8,4) (5.8,3.5) -- (6,4);
  
  \draw[->] (7,0) -- (7,4); 
  \draw[dashed] (-2.7,1) -- (7.2,1) (-2.7,1.5) -- (7.2,1.5) (-2.7,2) -- (7.2,2); 
  \draw[blue,dashed]  (-2.7,2.5) -- (7.2,2.5) node[right]{$\delta$};
  \draw [->] (6,1.55) --node[above]{$\lambda$} (6.8,1.55); 
   
  \foreach \x in {(0,0),(1,.2),(1,-.2),(2,0)}
  \draw[fill=gray] \x circle (2pt);
 \end{tikzpicture}
\caption{The function $\lambda$ (distance from the core) and its cutoff value $\delta$.}
\label{fig:lambdaexample}
\end{figure}

The moduli stack $\mathfrak M_{1,n}^{\mathrm{cen}}$  of centrally aligned curves is a logarithmic algebraic stack in the smooth topology, and $\mathfrak M_{1,n}^{\mathrm{cen}}\to\mathfrak M_{1,n}$ is a logarithmic modification. The main construction of~\cite{RSPW} canonically associates to any centrally aligned family of curves $\mathcal C_S$ a partial destabilisation $\widetilde{\mathcal C}_S$, introducing $2$-pointed components, and a contraction
\[
\widetilde{\mathcal C}_S\to \overline{\mathcal C}_S,
\]
where $\overline{\mathcal C}_S$ may contain a Gorenstein elliptic singularity \cite{SMY1}. Intuitively, this is achieved by interpreting the exceptional directions of $\mathfrak M_{1,n}^{\mathrm{cen}}\to\mathfrak M_{1,n}$ as moduli of attaching data.

Given a family of tropical curves, the function $\delta$ provides a non-negative real number for each point on the base. We consider the \textbf{circle} of points on each fibre whose distance from the core is equal to $\delta$. A partial destabilization, or ``bubbling'', introduces $2$-pointed rational curves into the curve by adding vertices to the dual graph at the points that lie on this circle (the crosses at height $\delta$ in Figure \ref{fig:lambdaexample}); we denote this new tropical curve by $\widetilde{\plC}$. The open disc of radius $\delta$, i.e. the locus where $\lambda<\delta$, is then contracted to a Gorenstein singularity in $\overline{\mathcal C}_S$. The number of branches is equal to the number of vertices of $\widetilde{\sqC}$ which lie on the circle. 

The space of stable maps $\widetilde\VZ_{1,n}(\PP^m,d)$ from the universal nodal curve $\mathcal C$ over the stack $\mathfrak M_{1,n}^{\mathrm{cen}}$ is Deligne-Mumford and proper, with projective coarse moduli space. A compatibility condition between the alignment and the map is required, namely that the circle of radius $\delta$ passes through \emph{at least one component of positive degree}. A stable map in this space is said to satisfy the \textbf{factorisation condition} if the map $\widetilde{\mathcal C}\to \mathcal C\to \mathbb P^m$ factors through the contraction $\widetilde{\mathcal C}\to \overline{\mathcal C}$. Maps that are not constant on any genus one subcurve of $\mathcal C$ automatically satisfy this condition; indeed, the contraction is an isomorphism in this case.
 
\begin{thm}[{\cite[Theorem B]{RSPW}}]
The substack $\VZ_{1,n}(\mathbb P^m,d)$ of $\widetilde\VZ_{1,n}(\PP^m,d)$ of maps to $\mathbb P^m$ that satisfy the factorisation condition is smooth, proper, and has the expected dimension.
\end{thm}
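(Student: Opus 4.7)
My plan is to split the argument into properness (relatively easy) and smoothness with expected dimension (the work), and in the second part to reduce everything to deformation theory of maps from a Gorenstein genus one curve, where the arguments sketched in \S\ref{S:ellsing} apply directly.

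First, properness. Since $\widetilde\VZ_{1,n}(\mathbb P^m,d)$ is already proper, it suffices to realise $\VZ_{1,n}(\mathbb P^m,d)$ as a closed substack. Over a family $f\colon \widetilde{\mathcal C}_S\to\mathbb P^m$ equipped with its universal contraction $\sigma\colon \widetilde{\mathcal C}_S\to\overline{\mathcal C}_S$, the factorisation condition is equivalent to the vanishing of the restriction of $df$ to the relative tangent directions that $\sigma$ collapses -- exactly the condition of Corollary \ref{cor:maps_from_elliptic_sing} applied in families. This is manifestly a closed condition, giving the required closed immersion.

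Second, smoothness and expected dimension. On the open locus where $\sigma$ is an isomorphism (no bubbling), the factorisation condition is vacuous and the moduli reduces to the classical space of stable maps from nodal genus one curves nonconstant on the minimal genus one subcurve; smoothness and the expected dimension are then standard, using ampleness of $T_{\mathbb P^m}$ to kill $H^1(C, f^\ast T_{\mathbb P^m})$. On strata where bubbling occurs, factorisation equips us by Corollary \ref{cor:maps_from_elliptic_sing} with a genuine map $\bar f\colon \overline C\to \mathbb P^m$ from a Gorenstein curve with an elliptic $m$-fold point. The compatibility condition built into $\widetilde\VZ$ -- that the circle of radius $\delta$ meets a component of positive degree -- forces at least one branch through the singularity to carry positive degree. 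Ampleness of $T_{\mathbb P^m}$ together with the Gorenstein duality formula on $\overline C$ then yields $H^1(\overline C,\bar f^\ast T_{\mathbb P^m})=0$: any nonzero element of its Serre dual would have to vanish on the positive-degree branch and hence on the whole elliptic singular locus by the Gorenstein structure. Deformations of $\bar f$ over $\mathfrak M_{1,n}^{\mathrm{cen}}$ are thus unobstructed, and Riemann--Roch on $\overline C$ (of arithmetic genus one) matches the expected dimension.

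To glue these two pictures one must identify the deformation theory of $(\widetilde C,f)$ with that of $(\overline C,\bar f)$ on the factorisation locus. The discrepancy is precisely the moduli of attaching data at the elliptic singularity, a $\Gm^m$-torsor compactified by sprouting as recalled in \S\ref{S:ellsing}; this compactification is smooth and accounts exactly for the additional directions introduced by the logarithmic modification $\mathfrak M_{1,n}^{\mathrm{cen}}\to\mathfrak M_{1,n}$. The main obstacle will be the careful stratum-by-stratum bookkeeping on the sprouting boundary, where several branches of the singularity may meet simultaneously: one must verify that the tangent-obstruction theory on $\VZ_{1,n}(\mathbb P^m,d)$ sits compatibly inside that of $\widetilde\VZ_{1,n}(\mathbb P^m,d)$ across all such strata, and that the closed embedding from the first step is moreover regular of the correct codimension. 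This is the technical heart of \cite{RSPW}, but once it is done the argument above closes.
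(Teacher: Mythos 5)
The theorem you are asked to prove is not proved in this paper: it is cited as \cite[Theorem~B]{RSPW} and taken as an input, so there is no internal proof to compare your proposal against. I will therefore comment on the proposal on its own terms, measured against the strategy this paper attributes to~\cite{RSPW}.

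Your high-level plan is the right one and matches what the paper indicates: (i) properness via realising $\VZ_{1,n}(\mathbb P^m,d)$ as a closed substack of the proper $\widetilde{\VZ}_{1,n}(\mathbb P^m,d)$ using Corollary~\ref{cor:maps_from_elliptic_sing} in families; (ii) smoothness by trading deformations of the aligned nodal map $(\widetilde C,f)$ for deformations of the Gorenstein map $(\overline C,\bar f)$ over $\mathfrak M_{1,n}^{\mathrm{cen}}$ and killing $H^1(\overline C,\bar f^\star T_{\mathbb P^m})$ by ampleness; (iii) Riemann--Roch for the dimension. This is exactly what the authors summarise when they write that the ampleness of $T_{\mathbb P^m}$ makes deformations of a map from a Gorenstein genus one curve unobstructed, provided it is non-constant on the core.

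Two points should be tightened. First, your justification of the Serre-dual vanishing is misattributed. What makes a global section $s$ of $\bar f^\star\Omega_{\mathbb P^m}$ vanish is not ``the Gorenstein structure'' but the following: on each branch of positive degree, $\bar f^\star\Omega_{\mathbb P^m}$ injects into a direct sum of negative-degree line bundles (Euler sequence), so $s$ vanishes identically there; hence $s$ vanishes at the singular point on \emph{every} branch (sections of any coherent sheaf on a curve agree at a singular point); the remaining degree-zero branches then carry a constant section vanishing at the point, hence zero. The Gorenstein hypothesis enters only through the triviality of $\omega_{\overline E}$ on the core, which is what gives you Serre duality in the clean form $H^1(\overline E,\bar f^\star T_{\mathbb P^m})^\vee\cong H^0(\overline E,\bar f^\star\Omega_{\mathbb P^m})$. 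The extra ``derivative'' gluing condition at an elliptic $m$-fold point only shrinks $H^0$ further, so it is harmless, but it is not what forces vanishing. Second, and more substantively, the passage you describe as ``the main obstacle'' --- the compatibility of tangent-obstruction theories across the sprouting boundary, and the verification that the factorisation locus is a regular embedding of the expected codimension in $\widetilde{\VZ}_{1,n}(\mathbb P^m,d)$ --- is not a bookkeeping footnote; it is precisely where the content of~\cite{RSPW} lies (the construction of the universal Gorenstein contraction over $\mathfrak M_{1,n}^{\mathrm{cen}}$ and the deformation comparison of \cite[\S 4]{RSPW}). You are honest about leaving it open, but as written the proposal does not close that gap, and it is the step that separates a plausible sketch from a proof.

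One further small caution on properness: ``manifestly closed'' oversells it. The factorisation condition has two parts --- constancy of $f$ on the subcurve contracted by $\sigma$, and the derivative condition of Corollary~\ref{cor:maps_from_elliptic_sing} --- and the first must be imposed in families before the second even makes sense. Both are closed, but~\cite[Theorem~4.3]{RSPW} does devote a genuine argument to this, so it deserves more than a word.
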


\begin{remark}[Constant maps]\label{rem: constant-maps}
If the degree of the map is at most $1$, maps from smooth genus one curves to $\PP^m$ are constant and the interior of the space of stable maps is empty; the compactification by stable maps is non-empty. The space of maps satisfying the factorisation condition is also empty. 
\end{remark}

The remainder of this section lifts this construction to the category of relative maps to $(\mathbb P^m,H)$. The core result shows that two factorisation conditions remove the obstructions. 

\subsection{Relative geometry: compactification} Fix a hyperplane $H\subseteq \mathbb P^m$. Let $\alpha$ be a partition of the degree $d>0$. Consider the moduli space $\mathcal M_{1,\alpha}^\circ(\mathbb P^m|H,d)$ of maps from smooth genus one curves $C\to \mathbb P^m$ that meet $H$ at finitely many marked points with vanishing orders given by the partition $\alpha$. This is a smooth, but non-proper Deligne--Mumford stack. 

We compactify the space described and then desingularise it. For compactification, we begin with Abramovich--Chen--Gross--Siebert's logarithmic stable maps, and then work with expanded variants~\cite{AbramovichChenLog,ChenLog,GrossSiebertLog,KimLog}.

Let $\PP^m$ be endowed with the divisorial logarithmic structure induced by $H$. The moduli space $\overline{\mathcal{M}}^{\operatorname{log}}_{1,\alpha}(\mathbb P^m|H,d)$ is a fibred category over logarithmic schemes, whose fibre over $(S,M_S)$ is the groupoid of logarithmically smooth curves of genus one over $(S,M_S)$ equipped with a logarithmic map to $\mathbb P^m$ of degree $d$ and contact order $\alpha$. This category is representable by a proper algebraic stack with logarithmic structure, parameterising minimal objects~\cite{ChenLog}. 

\subsection{Relative geometry: expansions} 

In order to make the obstruction theory more geometric, we expand the target. To elucidate the connection with the static target, consider a logarithmic stable map $[C\to (\mathbb P^m,H)]$ over $\mathrm{Spec} \ (\mathbb N\to \mathbb C)$. There is a map of tropicalizations:
\[
\plC\to \mathbb R_{\geq 0}.
\]
Choose a subdivision of $\mathbb R_{\geq 0}$ whose vertices are the images of vertices of $\plC$. Pull this subdivision back to $\plC$ by subdividing it along the preimage of the new vertices of $\mathbb R_{\geq 0}$. Denote the resulting map $\widetilde \plC \to \widetilde{\mathbb R}_{\geq 0}$. These subdivisions induce logarithmic modifications:
\[
\widetilde C\to \mathbb P^m[s],
\]
see~\cite{AW}. The latter is the $s$-times iterated deformation to the normal cone of $H$ in $\mathbb P^m$. Components of the target are in bijection with the vertices in $\widetilde{\mathbb R}_{\geq 0}$. The curve is modified by adding rational components for the newly introduced vertices.

The result is a logarithmic stable map to an expansion $\mathbb P^m[s]$, with a collapse to the main component $\mathbb P^m[s]\to\mathbb P^m$. Maps are considered equivalent if they differ by $\Gm$-scaling the target at higher level. This is a logarithmic enhancement of the theory of relative maps, due to Li \cite{Li1}.

Kim constructs a moduli space of logarithmic maps to expanded degenerations, which on logarithmic points, gives the above construction~\cite{KimLog}. Kim's space is identified with a subcategory of the Abramovich--Chen--Gross--Siebert space (as stacks over logarithmic schemes), and its dual minimal monoids are cones in a subdivision of the dual minimal cones of the unexpanded space, see~\cite[\S~2]{R19}. The map from this space to the logarithmic space is akin to a blowup: it is a logarithmic modification induced by a subdivision of tropical moduli spaces. We typically work with expansions, and use the notation
$\ol\Mcal_{1,\alpha}(\PP^m|H,d)$
for Kim's space of logarithmic stable maps.

\subsection{Relative geometry: alignment} Consider the space of logarithmic morphisms from the universal curve $\mathcal{C} \to \mathfrak{M}_{1,n}^{\mathrm{cen}}$ to the logarithmic scheme $(\PP^m,H)$. As in the absolute case, an alignment can be introduced on the source curve as in Definition~\ref{def: alignment}. The circle associated to the alignment must pass through a vertex of positive degree in $\PP^m$ and contain only vertices of degree zero in its strict interior. We obtain a logarithmic modification of the Abramovich--Chen--Gross--Siebert space. By means of additional subdivisions of the tropical target and source, as outlined in the previous section, we obtain a logarithmic modification of Kim's space with expansions:
\begin{equation*} \widetilde\VZ_{1,\alpha}(\PP^m|H,d) \to \ol\Mcal_{1,\alpha}(\PP^m|H,d)\end{equation*}
which we refer to as the \textbf{moduli space of centrally aligned maps} to $(\PP^m,H)$. This space parametrises logarithmic maps to the expanded target with an alignment which is compatible with the map. There is a morphism
\begin{equation*} \widetilde\VZ_{1,\alpha}(\PP^m|H,d) \to \widetilde\VZ_{1,n}(\PP^m,d)\end{equation*}
obtained by forgetting the logarithmic structure on the target and collapsing the map.

\subsection{Relative geometry: factorisation}\label{subsection factorisation} Let $[C\to \mathbb P^m[s]\to \mathbb P^m]$ be a centrally aligned logarithmic map. Recall that $\mathbb P^m[s]$ consists of a union of $\mathbb P^m$ with $s$ copies of the projective bundle $\mathbb{P} = \mathbb P(\mathcal O_H\oplus \mathcal O_H(1))$. The latter components are \textbf{the higher levels}. We will say that a subcurve $D\subseteq C$ \textbf{maps to higher level} if the composite $C\to \mathbb P^m$ maps $D$ into $H\subset \mathbb P^m$. 

Let $D_F\subseteq C$ be the maximal genus one subcurve which is mapped to a higher level and contracted by the map $C\to \mathbb P^m[s]$. Let $\delta_F$ denote the distance from the core to the nearest component of $C \setminus D_F$. Similarly let $D_B\subseteq C$ be the maximal genus one subcurve that is contracted by the collapsed map to $\mathbb P^m$, and let $\delta_B$ be the associated radius. This coincides with the radius of the underlying map to $\mathbb P^m$. Of course, $\delta_F\leq \delta_B$, so $(C,\delta_F)$ is centrally aligned as well.

The datum $(\delta_F,\delta_B)$ determines a destabilisation $\widetilde C$ of $C$ together with successive contractions $\widetilde C\to \overline C_F\to\overline C_B$. The contractions are constructed by~\cite[Section~3]{RSPW}.

The \emph{core} of a Gorenstein curve of genus one is the minimal genus one subcurve. Any such curve decomposes as the nodal union of the core and a (possibly empty) rational forest. If the curve contains an elliptic $m$-fold point, we say a map is non-constant on the core if at least one branch of the singularity is not contracted.
The following condition identifies smoothable maps. 

\begin{definition}
The map $f\colon C\to \mathbb P^m[s]$ \textbf{factors completely} if 
\begin{enumerate}
\item $f$ factors through $\overline C_F$ and it is non-constant on its core,
\item letting $f_B\colon C\to\PP^m$ denote the composite of $f$ with the collapsing map $\mathbb P^m[s]\to\mathbb P^m$, the map $f_B$ factors through $\overline C_B$ and it is non-constant on its core.
\end{enumerate}
\end{definition}

\begin{rem}
 The factorisation conditions are understood to be satisfied if $C\to \mathbb P^m[s]$ (resp. $C\to \mathbb P^m$) does not contract a genus one subcurve. 
\end{rem}

Let
\begin{equation*} \VZ_{1,\alpha}(\mathbb P^m|H,d) \subseteq \widetilde\VZ_{1,\alpha}(\PP^m|H,d) \end{equation*}
be the stack of maps from centrally aligned curves to expansions that factor completely. If $[C\to \mathbb P^m[s]\to \mathbb P^m]$ is a family of centrally aligned maps over $S$ that factors completely, there is a moduli map $S\to \VZ_{1,n}(\mathbb P^m,d)$, to the principal component of the space of absolute maps. 

\begin{thm}\label{thm: log-smoothness}
The stack $\VZ_{1,\alpha}(\mathbb P^m|H,d)$ is logarithmically smooth, proper, and has the expected dimension.
\end{thm}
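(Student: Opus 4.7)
The plan is to reduce the statement to the absolute case \cite[Theorem B]{RSPW} by showing that the forward factorisation plays, for the expanded relative geometry, exactly the role that the ordinary factorisation plays in the absolute setting. For properness, the ambient stack $\widetilde\VZ_{1,\alpha}(\PP^m|H,d)$ is by construction a logarithmic modification of Kim's space $\ol\Mcal_{1,\alpha}(\PP^m|H,d)$, and is therefore proper. The complete factorisation condition is closed on this space: given a one-parameter family of aligned maps from $\widetilde C$ that factors through the two Gorenstein contractions $\widetilde C \to \overline C_F \to \overline C_B$ over the punctured base, the universal property of the contractions in \cite[\S3]{RSPW}, together with separatedness of $\Hom$-stacks out of $\overline C_F$ and $\overline C_B$, extends the factorisations to the special fibre. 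Thus $\VZ_{1,\alpha}(\PP^m|H,d)$ is a closed substack of a proper stack.

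For logarithmic smoothness I would analyse the obstruction theory at a point $[f\colon C\to \PP^m[s]]$ by pulling back the logarithmic Euler sequence on each level of the expansion. On the main level $(\PP^m,H)$ the logarithmic tangent bundle is a quotient of $\OO\oplus\OO(1)^{\oplus m}$; the higher levels $\PP_H(\OO\oplus\OO(1))$ admit an analogous presentation whose trivial summand records the two boundary sections of the projective bundle. Obstructions live in $H^1(C,f^*T^{\log})$ and, by the Euler sequence, decompose into a twisted part and a trivial part. The twisted contributions $H^1(C,f^*\OO(1)^{\oplus \bullet})$ are annihilated by the back factorisation through $\overline C_B$: this is precisely the argument of \cite{RSPW} — factoring through the elliptic contraction forces the $H^1$-classes to vanish on the contracted genus-one subcurve, after which positivity of the degree on the core kills the obstruction via the standard normal bundle computation on an elliptic $m$-fold point, using Lemmas \ref{lem:pushout} and \ref{lem:PicE}.

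The trivial contribution is one-dimensional, and it is here that the forward factorisation enters. By Corollary~\ref{cor:maps_from_elliptic_sing}, factoring $f$ through $\overline C_F$ is equivalent to requiring that the differential of $f$ annihilates the collapsed tangent vector $v$ at the elliptic $m$-fold point of $\overline C_F$. After identification via the Euler sequence on the appropriate level, this condition is exactly a section of the rank-one cokernel whose $H^1$ pairs with the trivial obstruction; setting it to zero cuts out the remaining obstruction. Combining the two factorisation conditions therefore yields a logarithmically smooth moduli problem, and the expected dimension follows from Riemann–Roch on a logarithmically smooth curve together with the fact that the central alignment and the two factorisations are codimension-preserving inside the universal deformation space of Kim's moduli problem. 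The hardest step is verifying that the forward factorisation transversally cuts out the trivial obstruction without conflicting with the back factorisation on nested genus-one loci when the core is mapped into a higher level; this requires careful bookkeeping of the containment $D_F\subseteq D_B$ and of the associated radii $\delta_F\leq\delta_B$, using the explicit tropical model of the alignment to ensure that the two cuts land in complementary directions within the obstruction space.
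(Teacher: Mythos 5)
Your properness argument is essentially the one the paper has in mind: $\widetilde\VZ_{1,\alpha}(\PP^m|H,d)$ is proper as a logarithmic modification of Kim's space, and complete factorisation is a closed condition (compare \cite[Theorem 4.3]{RSPW}), so $\VZ_{1,\alpha}(\PP^m|H,d)$ is a closed substack of a proper stack.

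For logarithmic smoothness, however, there is a genuine gap. The paper's mechanism is as follows: consider the forgetful morphism $\nu\colon \VZ_{1,\alpha}(\PP^m|H,d)\to\VZ_{1,n}(\PP^m,d)$ sending a map to its collapsed stabilisation. Using the exact sequence $0\to T_{\PP^m}(-\log H)\to T_{\PP^m}\to\OO_H(H)\to 0$ (rather than the Euler sequence) one identifies the relative obstruction group $\operatorname{Obs}(f/(C,\delta_B,f_B))$ as $H^1(\overline C_F,\OO)$, the obstruction to extending a trivialisation of the line bundle $\OO_E(\sum\mu_i x_i)$ attached to the piecewise-linear function of the tropical map. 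The crux of the proof is then a \emph{surjectivity} statement: the map $\operatorname{Def}(C,\delta_B,f_B)\to H^1(\overline C_F,\OO)$ is everywhere surjective, because moving the attaching points $\mathbf{x}$ on the contracted genus-one subcurve $E$ realises the surjection $\operatorname{Ext}^1(\Omega_E(\mathbf{x}),\OO_E)\twoheadrightarrow H^1(\overline C_F,\OO)$ (this uses \cite[Corollary 3.5.3]{RSPW2}, and crucially that at least one slope $\mu_i$ is nonzero). Since the target of $\nu$ is smooth of expected dimension and the relative obstruction space has constant rank one with the obstruction map surjective, the preimage of $0$ — which \emph{is} the log deformation space of $f$ — has everywhere the expected dimension, whence unobstructedness.

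Your proposal does not contain this surjectivity argument; instead it asserts that the forward factorisation ``pairs with the trivial obstruction'' and that ``setting it to zero cuts out the remaining obstruction.'' This conflates two different things: the forward factorisation is an $H^0$-level condition (Corollary~\ref{cor:maps_from_elliptic_sing}, $\mathrm{d}f(v)=0$) defining $\VZ$ as a closed substack of $\widetilde\VZ$, whereas the obstruction lives in $H^1$ and must be shown to be \emph{killable by deformation}, not ``cut out.'' Cutting a moduli space by a closed condition does not a priori make it smooth; what makes $\VZ$ smooth is the specific geometry by which the absolute deformations surject onto the rank-one relative obstruction. Your proposal flags ``the hardest step'' at the end but does not supply the argument that would fill it. Additionally, the decomposition of $H^1(f^*T^{\log})$ via the Euler sequence is only a quotient, not a direct sum into ``twisted'' and ``trivial'' parts, which makes the claimed separation of the two factorisation conditions into ``complementary directions'' of the obstruction space imprecise as stated.
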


Maps out of elliptic curves will be assumed to have degree at least $2$, ensuring that the main component is non-empty, c.f. Remark~\ref{rem: constant-maps}. 

\begin{proof}

There is a forgetful morphism
\[
\nu\colon \VZ_{1,\alpha}(\mathbb P^m|H,d)\to \VZ_{1,n}(\mathbb P^m,d),
\]
remembering the stabilisation of the composite $f_B\colon C\to\PP^m[s]\to\PP^m$. A priori, this is in $\overline{\pazocal M}_1(\PP^m,d)$, but by assumption $(C,\delta_B)$ is centrally aligned, and $f_B$ factors through $\overline C_B$. 

Before embarking, we record that the proof will utilise $\nu$ in the following way: the deformation spaces of objects in the domain of $\nu$ will be cut out from the deformation spaces of their images in the target of $\nu$, by the vanishing of an obstruction class. It will then be shown that this subspace of deformations has everywhere the expected dimension. 

The expanded target $\PP^m[s]$ is logarithmically \'etale over $\PP^m$ with the divisorial structure associated to $H$, so it is equivalent to study the deformation theory of the unexpanded map to $(\PP^m,H)$. The logarithmic morphism $(\PP^m,H)\to \PP^m$ induces an exact sequence:
\[0\to T_{\PP^m}(-\operatorname{log}H)\to T_{\PP^m}\to\OO_H(H)\to 0,\]
relating the absolute logarithmic deformation/obstruction theory of the map with the those relative to the target of $\nu$. By examining the associated long exact sequence, we find that a stable map can be obstructed only when a genus one subcurve is mapped to higher level in such a way that it is contracted by the collapsed map $f_B$. In this case, let $C_0$ denote the core, and $E$ the connected component of $f_B^{-1}(f_B(C_0))$ of arithmetic genus one.

Let $f\colon C\to\PP^m[s]$ over the base $S$ be a possibly obstructed map to the expansion that factors completely, and let $S^\prime$ be a strict square-zero extension of the base.  The deformation problem is local, so we may as well suppose that $\overline M_S^{\text{gp}}$ is a constant sheaf of monoids with stalk $\overline M_{S,s}^{\text{gp}}=:Q$, for a geometric point $s\in S$. Note that sections of $\overline M_S$ such as $\delta_B$ and $\delta_F$ extend automatically due to strictness. Moreover, it is equivalent to deform $f$ or $f_F\colon\overline C_F\to\PP^m_S$, where $\overline C_F$ is given the trivial logarithmic structure around the elliptic singularity.

The map $f_F\colon \overline C_F\to\PP^m[s]$ induces a tropical map $\alpha\colon\plC_F\to\RR_{\geq 0}\times\sigma_Q$; we view this piecewise linear function as a section $\alpha\in H^0(\overline C_F,\overline M_C^{\text{gp}})$. With notation as above, we obtain a trivialisation of the line bundle $\OO_{\overline C_F}(-\alpha)$ associated to $\alpha$, after restriction to $E$. By \cite[Proposition 2.4.1]{RSPW}, the latter is isomorphic to $\OO_E(\sum\mu_i x_i)$, where $\mu_i$ is the slope of $\alpha$ along the edge $e_i$ of $\plC_F$, and $x_i$ the node corresponding to $e_i$ thought of as a smooth marking on $E$. 

For a \textit{strict} deformation of the curve, the piecewise linear function $\alpha$ extends canonically. The deformation problem for $f_F$ relative to $(C^\prime,\delta_B,f_B^\prime)$ reduces to the problem of extending $\alpha$ as a trivialisation of $\OO_E(\sum\mu_i x_i)$. This identifies $\operatorname{Obs}(f/(C,\delta_B,f_B))$ as $H^1(\overline C_F,\OO)$. The latter is a vector space of dimension $1$. Associated to the forgetful morphism $\nu$ at the level of perfect obstruction theories, we find a map:
\begin{equation}\label{eqn:def-obs}\operatorname{Def}(C,\delta_B,f_B)\to \operatorname{Obs}(f/(C,\delta_B,f_B)).\end{equation}
The left-hand side contains  $\operatorname{Def}(E,\mathbf{x})=\operatorname{Ext}^1(\Omega_E(\mathbf x),\OO_E)$, since $E$ is contracted by $f_B$. Since at least one of the $\mu_i$ is non-zero, $\operatorname{Ext}^1(\Omega_E(\mathbf x),\OO_E)$ surjects onto $H^1(\overline C_F,\OO)$ c.f. \cite[Corollary 3.5.3]{RSPW2}. Looking back at \eqref{eqn:def-obs}, given an infinitesimal deformation of $(C^\prime,\delta_B,f_B^\prime)$, it is possible to adjust the curve by a deformation of the points $\mathbf x$ to ensure that the obstruction vanishes.

We put the pieces together. The space $ \operatorname{Obs}(f/(C,\delta_B,f_B))$ always has rank $1$. Next, note that once the collapsed stable map is fixed, there are no \textit{logarithmic} deformations of the map to $\mathbb P^m[s]$: the morphism from logarithmic maps to ordinary maps is finite~\cite[\S~3.7]{ChenLog}. View the logarithmic deformation spaces of $f$ as subspace of the logarithmic deformation spaces of $(C^\prime,\delta_B,f_B^\prime)$, consisting of those deformations with vanishing class in $\operatorname{Obs}(f/(C,\delta_B,f_B))$. The map to this obstruction group is everywhere surjective, and since the data $(C^\prime,\delta_B,f_B^\prime)$ have unobstructed deformations, this subspace is everywhere of the expected dimension. Obstructions to logarithmic deformations of $f$ vanish, and we conclude.
\end{proof}

The theorem guarantees that the space of completely factoring maps to expansions has toric singularities, but one can say more.

\begin{cor}
The logarithmically smooth stack $\VZ_{1,\alpha}(\mathbb P^m|H,d)$ has at worst orbifold singularities, i.e. admits a non-representable cover by a smooth Deligne--Mumford stack.
\end{cor}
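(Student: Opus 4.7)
My plan is to deduce the corollary from two ingredients: (i) the logarithmic smoothness established in Theorem \ref{thm: log-smoothness}, and (ii) the general principle that a fine saturated logarithmically smooth Deligne--Mumford stack with simplicial characteristic monoids has at worst orbifold singularities, and admits a smooth DM cover obtained by taking root stacks along the components of the boundary divisor. The content is therefore to verify the simpliciality of the characteristic monoids $\overline{M}_{\VZ,x}$ at geometric points $x$ of $\VZ_{1,\alpha}(\PP^m|H,d)$.

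The stalk $\overline{M}_{\VZ,x}$ parametrises the deformations of the tropical data supported at $x$: edge lengths of the tropical source curve $\plC$, the edge lengths of the tropical expanded target $\widetilde{\RR}_{\geq 0}$, the alignment datum $\delta$, and the contact slopes, subject to the integral-linear compatibility conditions imposed by the tropical map and the central alignment. The source edge-length cone and the target edge-length cone are simplices in their respective factors. The alignment datum $\delta$ and the comparability conditions of Definition \ref{def: alignment} impose a total order on those vertices of $\plC$ lying within the circle of radius $\delta$; under this order the relevant coordinates may be replaced by their consecutive differences, which form a basis. The factorisation conditions contribute further linear equalities identifying certain of these differences. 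This is essentially the analysis carried out in \cite[\S 4]{RSPW} for the absolute case, and the relative enhancement contributes only simplicial factors coming from $\widetilde{\RR}_{\geq 0}$, so the combined cone is again simplicial.

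Given simpliciality, the smooth cover is constructed by the standard toric orbifold presentation: \'etale locally, the log smooth model $\Spec \mathbb C[P]$ with $P$ simplicial is the quotient of affine space by the finite diagonalisable group $\mu = \mathbb Z^n/N$, where $N \subseteq \mathbb Z^n$ is the sublattice generated by the primitive ray generators of $P^\vee$. Globalising by taking root stacks along each irreducible boundary divisor, with orders matching the lattice indices, produces the desired smooth Deligne--Mumford stack surjecting onto $\VZ_{1,\alpha}(\PP^m|H,d)$ via a non-representable cover. The main obstacle is the combinatorial simpliciality check, but this is controlled by the tree-like structure imposed by the alignment, and reduces to the analysis already performed in the absolute setting.
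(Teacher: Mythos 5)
Your overall plan is the same as the paper's: since logarithmic smoothness is already established, the content is to check that the characteristic monoids are simplicial, after which the standard toroidal package gives the orbifold conclusion. Your argument for simpliciality, however, contains a misconception and a gap in exactly the place where the work is.

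The misconception: you assert that ``the factorisation conditions contribute further linear equalities identifying certain of these differences.'' This is not how factorisation acts. The inclusion $\VZ_{1,\alpha}(\PP^m|H,d) \hookrightarrow \widetilde{\VZ}_{1,\alpha}(\PP^m|H,d)$ is \emph{strict}, so the characteristic monoids at corresponding points agree; factorisation (equivalently, well-spacedness on tropicalizations) selects \emph{which} cones of $\Sigma^{\mathrm{cen}}$ belong to $\Sigma_{1,\alpha}(\PP^m|H,d)$, it does not impose new linear relations inside a given cone. Your statement happens not to damage the conclusion (a subcomplex of a simplicial complex is simplicial), but it suggests you are imagining a subdivision or quotient where there is none, and the paper explicitly invokes strictness at this step.

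The gap: your simpliciality check for the aligned cone rests on ``under this order the relevant coordinates may be replaced by their consecutive differences, which form a basis.'' That reasoning works cleanly for a single totally ordered chain $0 \le x_1 \le \cdots \le x_k$, but the alignment of a genus one curve orders \emph{distances to the core along distinct branches of a tree}, and those distances are sums of edge-length coordinates on different subtrees; moreover the alignment interacts with the target expansion (both $\delta$ and the levels of $\widetilde{\RR}_{\ge 0}$ are being compared). You describe the source and target edge-length cones as ``simplices in their respective factors,'' but these are not independent factors, and it is not immediate that the combined cone, after imposing all alignment comparisons, is an orthant in some change of coordinates. The paper sidesteps this by a cleaner two-step argument: (a) before alignment, the dual of Kim's minimal monoid is identified with $\RR_{\ge 0}^s$ (hence simplicial), and (b) the alignment procedure is an \emph{iterated stellar subdivision} of the tropical moduli cone (citing~\cite[Section~4.6]{RSPW}), and stellar subdivision preserves simpliciality. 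Your argument would need to be promoted to something of this form to handle the branching and the iterated nature of the subdivision. The final globalization-via-root-stacks paragraph is fine but is routine once simpliciality is in hand.
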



\begin{proof}
Since $\VZ_{1,\alpha}(\mathbb P^m|H,d)$ is logarithmically smooth, we must show that the cones of its tropicalization are simplicial. Consider a logarithmic stable map to an expansion $C\to \mathbb P^m[s]$ without a central alignment. The tropical moduli cone obtained as the dual of the minimal base monoid can be identified with $\mathbb R_{\geq 0}^{s}$, see for instance~\cite[Section~2.2]{ChenDegeneration}. The alignment is an iterated stellar subdivision on tropical moduli spaces, see~\cite[Section 4.6]{RSPW}, and such subdivisions preserve the property of being simplicial. We conclude that the blowup $\widetilde{\VZ}^{\mathrm{exp}}_{1,\alpha}(\mathbb P^m|H,d)$ obtained by centrally aligning Kim's spaces has simplicial cones. The morphism $\VZ_{1,\alpha}(\mathbb P^m|H,d)\to \widetilde{\VZ}^{\mathrm{exp}}_{1,\alpha}(\mathbb P^m|H,d)$ is strict, so the cones of $\VZ_{1,\alpha}(\mathbb P^m|H,d)$ are simplicial.
\end{proof}

\subsection{Rubber variants}\label{subsection rubber} There exists a rubber variant of $\VZ_{1,\alpha}(\mathbb P^m|H,d)$, where the curve is contained in the higher levels, whose construction is analogous. Let $\mathbb P$ denote the projective bundle $\mathbb P(\mathcal O(1)\oplus\mathcal O)$ on $H\cong\mathbb P^{N-1}$. Equip this space with the logarithmic structure coming from the $0$ and $\infty$ sections of the bundle. Consider the moduli space $\VZ^{\leftrightarrow}_{1,\alpha}(\mathbb P|H_0+H_\infty,d)$ of logarithmic maps
\[
C\to \mathbb P[s]\to \mathbb P^{N-1},
\]
which factor completely, and where automorphisms are taken to cover the identity on the map to $\mathbb P^{N-1}$; two maps that differ by a $\mathbb G_{\operatorname{m}}$ translate in the fibre direction are considered equivalent. As previously, the map to the bundle and the map to the base $\mathbb P^{N-1}$ both factor through possibly different singularities. 

\begin{thm}\label{thm rubber log smooth}
The stack $\VZ^{\leftrightarrow}_{1,\alpha}(\mathbb P|H_0+H_\infty,d)$ is logarithmically smooth, proper, and has the expected dimension.
\end{thm}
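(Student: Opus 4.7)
The plan is to mirror the proof of Theorem~\ref{thm: log-smoothness}, adapting for the rubber setting. The essential new features are the two-sided boundary $H_0+H_\infty$ and the $\Gm$-quotient by fibrewise scaling. The main obstacle is checking that neither of these enlarges the relative obstruction beyond what was handled in the single-divisor case, since an extra trivial summand in the logarithmic tangent bundle would in principle allow additional obstructions from contracted elliptic subcurves mapping to higher level.

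First, I would establish properness. This follows from a valuative criterion directly analogous to~\cite[Lemma~5.9]{Vre}, with closedness of complete factorisation proven as in~\cite[Theorem~4.3]{RSPW}. The $\Gm$-action scales the rubber target at higher level and is free on the locus of maps not contained in $H_0\cup H_\infty$, so the quotient is a proper DM stack.

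For logarithmic smoothness, I would introduce the forgetful morphism
\[
\nu^{\leftrightarrow}\colon \VZ^{\leftrightarrow}_{1,\alpha}(\mathbb P|H_0+H_\infty,d)\to \VZ_{1,n}(\mathbb P^{N-1},d)
\]
obtained by composing a rubber map $C\to\mathbb P[s]$ with the iterated collapse $\mathbb P[s]\to\mathbb P\to\mathbb P^{N-1}$ and stabilising the result. The expansion $\mathbb P[s]$ is logarithmically \'etale over $\mathbb P$ with its divisorial structure, so it suffices to analyse the deformation theory of the unexpanded map to $(\mathbb P,H_0+H_\infty)/\Gm$ relative to $\mathbb P^{N-1}$. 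From the exact sequence
\[
0\to T_{\mathbb P/\mathbb P^{N-1}}\bigl(-\log(H_0+H_\infty)\bigr)\to T_{\mathbb P}\bigl(-\log(H_0+H_\infty)\bigr)\to T_{\mathbb P^{N-1}}\to 0,
\]
together with the observation that the relative logarithmic tangent bundle $T_{\mathbb P/\mathbb P^{N-1}}(-\log(H_0+H_\infty))$ is trivial of rank one (its fibres being toric $\mathbb P^1$ with two boundary points) and is killed by the infinitesimal generator of the $\Gm$-action, I would conclude that the relative obstruction theory of $\nu^{\leftrightarrow}$ has exactly the same shape as in Theorem~\ref{thm: log-smoothness}: obstructions arise only when a connected genus one subcurve maps into higher level and is contracted by the composite to $\mathbb P^{N-1}$.

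With this setup in hand, the rest of the argument transcribes verbatim from the previous proof. Let $E$ denote the connected component of $f_B^{-1}(f_B(C_0))$ of arithmetic genus one, where $C_0$ is the core and $f_B$ the composite to $\mathbb P^{N-1}$. By \cite[Proposition~2.4.1]{RSPW} and strict extension of the tropicalisation $\alpha$, the relative obstruction group is identified with $H^1(\overline C_F,\OO_{\overline C_F})$, which is one-dimensional. Since the factorisation condition forces at least one edge slope $\mu_i$ of $\alpha$ at a node $x_i$ on $E$ to be nonzero, \cite[Corollary~3.5.3]{RSPW2} ensures that deformations of the attaching points surject onto $H^1(\overline C_F,\OO)$. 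Combined with logarithmic smoothness and expected dimension of $\VZ_{1,n}(\mathbb P^{N-1},d)$, it follows that $\VZ^{\leftrightarrow}_{1,\alpha}(\mathbb P|H_0+H_\infty,d)$ is logarithmically smooth of expected dimension. Simpliciality of the tropical cones, hence the orbifold corollary analogous to the absolute case, follows by the same stellar subdivision argument.
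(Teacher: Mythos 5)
Your strategy differs substantially from the paper's, and it contains a gap in the way it handles the rubber quotient. The paper does not attack the deformation theory of $\VZ^{\leftrightarrow}_{1,\alpha}(\mathbb P|H_0+H_\infty,d)$ directly. Instead it factors the argument into two cleanly separated steps: first, Theorem~\ref{thm: log-smoothness}'s proof is applied verbatim to the \emph{unquotiented} space $\VZ_{1,\alpha}(\mathbb P,d)$ of completely factoring maps to the $\mathbb P^1$-bundle (here the relative obstruction analysis is exactly the one you transcribe, with $H^1(\overline{C}_F,\OO)$ as the obstruction); second, the passage to the rubber quotient is handled structurally: $\mathbb G_{\mathrm{log}}$ acts freely on $\VZ_{1,\alpha}(\mathbb P_{\mathrm{log}},d)$, so this space is a $\mathbb G_{\mathrm{log}}$-torsor over the rubber moduli problem, and since a torsor under a logarithmically smooth group over a logarithmically smooth total space has logarithmically smooth base, the quotient $\VZ^{\leftrightarrow}_{1,\alpha}(\mathbb P_{\mathrm{log}},d)$ inherits smoothness. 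Comparison with $\VZ^{\leftrightarrow}_{1,\alpha}(\mathbb P|H_0+H_\infty,d)$ is then a logarithmically \'etale morphism, by the lifting criterion as in \cite[Theorem~5.3.4]{MW17}.

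The gap in your version is the sentence asserting that the trivial rank-one subsheaf $T_{\mathbb P/\mathbb P^{N-1}}(-\log(H_0+H_\infty))$ ``is killed by the infinitesimal generator of the $\Gm$-action,'' from which you conclude that the relative obstruction theory of $\nu^{\leftrightarrow}$ has the same shape as in Theorem~\ref{thm: log-smoothness}. The infinitesimal generator of the $\Gm$-action \emph{spans} this rank-one summand; it does not kill anything, and more to the point quotienting by the global $\Gm$-translation removes a single direction from $H^0$ of the pullback of this sheaf — it has no direct effect on the $H^1$-valued obstruction carried by the contracted genus one subcurve. So the obstruction still has to be cancelled against deformations coming from below (attaching-point deformations, as in the cited \cite[Corollary~3.5.3]{RSPW2}), and one must also say why the deformation problem for maps \emph{modulo} $\Gm$-translation has an obstruction theory relative to $\VZ_{1,n}(\mathbb P^{N-1},d)$ of the asserted shape. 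Your ``transcribes verbatim'' elides precisely this point. The paper's torsor argument is not a stylistic preference but a way to avoid ever having to set up a modular deformation theory for the quotient: one proves smoothness upstairs, then descends. If you want to keep your direct route, you would need to either (i) work with $\mathbb P_{\mathrm{log}}$ so that the rubber moduli problem is genuinely representable as a quotient and its cotangent complex can be compared to that of the torsor, or (ii) give a concrete obstruction-theoretic argument that quotienting by the translation does not introduce new obstructions — neither of which is currently supplied.

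A secondary remark: you propose a forgetful morphism landing in $\VZ_{1,n}(\mathbb P^{N-1},d)$, whereas the paper's analogue of the collapsing map is to $\mathbb P$, not to $\mathbb P^{N-1}$. This is not an error per se, but it means the exact sequence you write mixes two different ``base'' structures, and you should at least check that the two forgetful maps (to $\mathbb P$, then to $\mathbb P^{N-1}$) give a two-step filtration of the obstruction theory that you then handle separately, which is again what the paper's layered approach accomplishes implicitly.
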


On the locus of maps from smooth domains, the map from $\VZ^\circ_{1,\alpha}(\mathbb P|H_0+H_\infty,d)\to \VZ^{\leftrightarrow,\circ}_{1,\alpha}(\mathbb P|H_0+H_\infty,d)$ is a $\mathbb G_{\operatorname{m}}$-torsor. One expects that the map on compact moduli spaces is a nodal curve fibration. To prove this, it is convenient to work with the logarithmic multiplicative group and its torsors. The \textbf{logarithmic multiplicative group} $\mathbb G_{\mathrm{log}}$ is the functor on logarithmic schemes whose value on a logarithmic scheme $S$ is the group of global sections $H^0(S,M_S^{\mathrm{gp}})$; it is a proper group functor, and contains $\Gm$ as a subfunctor (from $\OO^\times_S\subseteq M_S^{\mathrm{gp}}$). This functor is representable after a logarithmically \'etale~\cite[Proposition~1]{RW19}.

The morphism $\mathbb P \to \mathbb P^{N-1}$ determines a $\mathbb G_m$-torsor; the inclusion $\mathbb G_m\hookrightarrow \mathbb G_{\mathrm{log}}$, determines a $\mathbb G_{\mathrm{log}}$-torsor denoted $\mathbb P_{\mathrm{log}}$ which is a functor on logarithmic schemes with a logarithmically \'etale cover by a scheme equipped with a logarithmic structure.

\begin{proof}[Proof of Theorem \ref{thm rubber log smooth}]
Consider the stack over logarithmic schemes $\VZ_{1,\alpha}(\mathbb P_{\mathrm{log}},d)$ of stable logarithmic maps that factor completely, noting the factorisation for the map to $\mathbb P_{\mathrm{log}}$ may be imposed after passing to an expansion c.f.~\cite[Section~3.3]{RSPW2}. Stability here coincides with stability for the projection to $\mathbb P^{N-1}$. The proof of Theorem~\ref{thm: log-smoothness} applies here to show that maps to $\mathbb P$ factorising completely are unobstructed. This space is a logarithmically \'etale cover of $\VZ_{1,\alpha}(\mathbb P_{\mathrm{log}},d)$, and logarithmic smoothness of the latter also follows.

The logarithmic multiplicative group $\mathbb G_{\mathrm{log}}$ acts on $\VZ_{1,\alpha}(\mathbb P_{\mathrm{log}},d)$ by translation without fixed points. Tautologically, $\VZ_{1,\alpha}(\mathbb P_{\mathrm{log}},d)$ is a $\mathbb G_{\mathrm{log}}$-torsor over the moduli problem of maps up to this $\mathbb G_{\mathrm{log}}$-translation. As $\mathbb G_{\mathrm{log}}$ is logarithmically smooth, the space $\VZ^{\leftrightarrow}_{1,\alpha}(\mathbb P_{\mathrm{log}},d)$ of maps up to $\Glog$-translation that factor completely is also logarithmically smooth. 

We compare $\VZ^{\leftrightarrow}_{1,\alpha}(\mathbb P_{\mathrm{log}},d)$ and  $\VZ^{\leftrightarrow}_{1,\alpha}(\mathbb P|H_0+H_\infty,d)$. A map to $(\mathbb P,H_0+H_\infty)$ gives rise to a map to $\mathbb P_{\mathrm{log}}$. The morphism
\[
\VZ^{\leftrightarrow}_{1,\alpha}(\mathbb P|H_0+H_\infty,d)\to \VZ^{\leftrightarrow}_{1,\alpha}(\mathbb P_{\mathrm{log}},d).
\]
is log \'etale by the lifting criterion, as in~\cite[Theorem~5.3.4]{MW17}. The result is a consequence. 
\end{proof}

\subsection{Hypersurface pairs}\label{subsection virtual}
Let $(X,Y)$ be a smooth pair with $Y$ very ample. The definition in \S \ref{subsection factorisation} applies and yields a moduli space $\VZ_{1,\alpha}(X|Y,\beta)$ with a morphism
\begin{equation*} \VZ_{1,\alpha}(X|Y,\beta) \to \ol\Mcal^{\operatorname{log}}_{1,\alpha}(X|Y,\beta)\end{equation*}
obtained a logarithmic modification and the application of the factorisation property. This moduli space will typically be non-equidimensional, but we equip it with a virtual class as follows. The divisor $Y$ defines an embedding $X \hookrightarrow \PP^m$, with $Y=X\cap H$ for some hyperplane $H$.

\begin{lemma} The following square is cartesian in the category of ordinary stacks:
\bcd
\VZ_{1,\alpha}(X|Y,\beta) \ar[r] \ar[d] \ar[rd,phantom,"\square"] & \VZ_{1,\alpha}(\PP^m|H,d) \ar[d] \\
\VZ_{1,n}(X,\beta) \ar[r,"i"] & \VZ_{1,n}(\PP^m,d).
\ecd
\end{lemma}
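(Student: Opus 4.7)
The plan is to check that the diagram is cartesian by separately verifying the four structural features of points of $\VZ$: the underlying stable map, the logarithmic enhancement, the central alignment, and the complete factorisation. At each step the structures for $(X,Y)$ will be identified with the pullback of those for $(\PP^m,H)$ along the closed immersion $i$.

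First I would verify the corresponding cartesian square for absolute Kontsevich moduli: given $C \to \PP^m$ together with a lift of its stabilised image to $X$, there is a unique lift $C \to X$ because $i\colon X\hookrightarrow\PP^m$ is a closed immersion. This gives the cartesian square at the bottom of the cube of Kontsevich/aligned moduli of absolute maps. Next I would handle the logarithmic structure on the target. Since $Y = X \cap H$ scheme-theoretically and $(X,Y)$, $(\PP^m,H)$ are smooth pairs, the divisorial log structure on $(X,Y)$ is the pullback of that on $(\PP^m,H)$; moreover the $s$-fold expansion $X[s]$ is naturally a closed logarithmic subscheme of $\PP^m[s]$ (its higher levels are the projective bundles $\PP_Y(N_{Y|X}\oplus\OO_Y) \hookrightarrow \PP_H(\OO_H\oplus\OO_H(1))$ restricted from the ambient expansion). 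Consequently, a logarithmic map $C \to \PP^m[s]$ whose collapsed underlying map factors through $X$ automatically lifts to a logarithmic map $C \to X[s]$ with the same contact orders $\alpha$; this identifies $\ol\Mcal_{1,\alpha}(X|Y,\beta)$ with the fibre product of $\VZ_{1,n}(X,\beta)$ and $\ol\Mcal_{1,\alpha}(\PP^m|H,d)$ over $\VZ_{1,n}(\PP^m,d)$ (the statement is at the level of ordinary stacks, so no log-fibre-product subtlety enters).

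Next I would address the alignment. The central alignment datum $(C/S,\delta)$ of Definition~\ref{def: alignment} is intrinsic to the source curve together with a section $\delta\in\overline M_S$ and the combinatorics of the target expansion, both of which are identical on the two sides of the square once the previous step is in place. Hence the aligned modifications agree, and the fibre product of aligned moduli is the aligned moduli of the fibre product.

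The main obstacle is the compatibility of the complete factorisation conditions. Given an $S$-point of the prospective fibre product, that is, a completely factoring $f\colon C\to\PP^m[s]$ whose stabilised collapsed map $f_B$ lifts to $\overline f_B\colon\overline C_B\to X$, one must show that $f$ itself lifts to a completely factoring map to $X[s]$. The contractions $\widetilde C\to\overline C_F\to\overline C_B$ depend only on the source curve and the pair $(\delta_F,\delta_B)$, so they are unchanged. The factoring map $\widetilde C\to \overline C_F\to \PP^m[s]$ is dominated by $\widetilde C$, whose image lies in the closed substack $X[s]\subseteq\PP^m[s]$; since $\widetilde C\to\overline C_F$ is surjective, the factoring map from $\overline C_F$ has image in $X[s]$. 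The same argument applied to $f_B$ produces the required factorisation through $\overline C_B\to X$, and non-constancy on the cores is preserved because closed immersions are injective on residue fields. This produces the inverse map from the fibre product to $\VZ_{1,\alpha}(X|Y,\beta)$ and, by the uniqueness of each of the four pieces of data, establishes the cartesian square.
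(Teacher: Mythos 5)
Your argument is essentially correct and arrives at the same conclusion, but takes a much more elaborate route than the paper, which disposes of the lemma in one sentence: the square is cartesian in the category of fs logarithmic stacks because of the modular description, and since $i$ is strict, it is also cartesian in ordinary stacks. Your four-step verification can be read as an unpacking of the clause ``clear from the modular description,'' and that part is sound; in particular the observation that $X[s]\hookrightarrow\PP^m[s]$ is a closed sub-expansion and that the contractions $\widetilde C\to \overline C_F\to\overline C_B$ are intrinsic to the source is exactly the right way to see that alignment and factorisation lift uniquely. However, the parenthetical claim that ``the statement is at the level of ordinary stacks, so no log-fibre-product subtlety enters'' is precisely the point you cannot wave away: the modular descriptions of these moduli stacks are as categories fibred over fs log schemes, and passing from an fs-log cartesian square to an ordinary one is not automatic (fs fibre products involve saturation and integralisation of monoids). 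What makes it legitimate here is that $i$ is strict, so the fs log fibre product and the ordinary fibre product of the bottom row have the same underlying stack, and the minimal log structures on objects match up on the two sides. Your proof would be complete if, in place of that parenthetical, you recorded that $i$ is strict and explained why this forces the log structures to agree; as written you have implicitly re-proved this fact in step 4 (by noting the lift to $X[s]$ is unique and carries the expected contact orders) without naming the principle that guarantees it.
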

\begin{proof} It is clear from the modular description that this square is cartesian in the category of fs logarithmic stacks, and since $i$ is strict, it is also cartesian in ordinary stacks.\end{proof}

Since $\VZ_{1,n}(\PP^m,d)$ is smooth and $\VZ_{1,n}(X,\beta)$ carries a virtual class \cite[Theorem 4.4.1]{RSPW}, the diagonal of $\VZ_{1,n}(\PP^m,d)$ defines a pullback~\cite[Appendix A]{BattistellaNabijou}, and therefore the virtual class on the space of maps to $(X,Y)$:
\begin{equation*} \virt{\VZ_{1,\alpha}(X|Y,\beta)} := i_\Delta^! [\VZ_{1,\alpha}(\PP^m|H,d)]. \end{equation*}
The virtual formalism is parallel to the one in~\cite{Ga}. As $\VZ_{1,\alpha}(X|Y,\beta)$ is equipped with evaluation maps and cotangent lines on the universal curve, we arrive at a definition of reduced Gromov--Witten invariants for the pair $(X,Y)$ by the standard mechanics. 

\section{Stratification and tropicalization}

\noindent It is know by Theorem~\ref{thm: log-smoothness} that $\VZ_{1,\alpha}(\mathbb P^m|H,d)$ is a toroidal orbifold. The irreducible components of its boundary stratify the space. The tropicalizations of logarithmic maps that factor completely are constrained by a condition called \textbf{well-spacedness}, c.f.~\cite[Section~4]{RSPW2}. 

\begin{definition}
Let $\plC$ be a tropical curve of genus one and let $\plE$ be its minimal subcurve of genus one. A tropical map $F: \plC\to \mathbb R_{\geq 0}$ is said to be \textbf{well-spaced} if one of the following two conditions are satisfied:
\begin{enumerate}
    \item no open neighbourhood of $\plE$ is contracted to a point in $\mathbb R_{>0}$, or
    \item\label{pt:well-spaced2} if an open neighbourhood of $\plE$ is contracted and $t_1,\ldots,t_k$ are the flags whose vertex is mapped to $F(\plE)$, but along which $F$ has nonzero slope; then, the minimum of the distances from $\plE$ to a vertex supporting $t_i$ occurs for at least two indices $i$.
\end{enumerate}
\end{definition}

\begin{figure}
 \begin{tikzpicture}
  \draw[gray] (0,-1) to[out=150, in=210, edge node={node [left] {$\plE$}}] (0,1);
  \draw[gray] (0,-1) to[out=30, in=-30] (0,1);
  \draw (0,1) --node[right]{$\ell_1$} (0,2) --node[above]{$t_1$} (-4,2) (0,2) --node[above]{$t_2$} (4,2);
  \draw[dotted] (-4.5,2) -- (-4,2) (4,2) -- (4.5,2);
  \draw (0,-1) --node[right]{$\ell_2$} (0,-2) --node[above]{$t_3$} (-4,-2) (0,-2) --node[above]{$t_4$} (4,-2);
  \draw[dotted] (-4.5,-2) -- (-4,-2) (4,-2) -- (4.5,-2);
  \foreach \x in {(0,2), (0,-2)}
  \draw[fill=black] \x circle (2pt);
  \foreach \x in {(0,1), (0,-1)}
  \draw[fill=gray] \x circle (2pt);
  \draw[->] (0,-2.5)--node[left]{$F$}(0,-3.5);
  \draw (-4,-4)--(4,-4);
  \draw[dotted] (-4.5,-4) -- (-4,-4) (4,-4) -- (4.5,-4);
  \draw[fill=black] (0,-4) node[below]{$F(\plE)$} circle (2pt);
  
 \end{tikzpicture}
\caption{An example of well-spacedness: condition \eqref{pt:well-spaced2} becomes $\ell_1=\ell_2$.}
\label{fig:exa_well-spaced}
\end{figure}
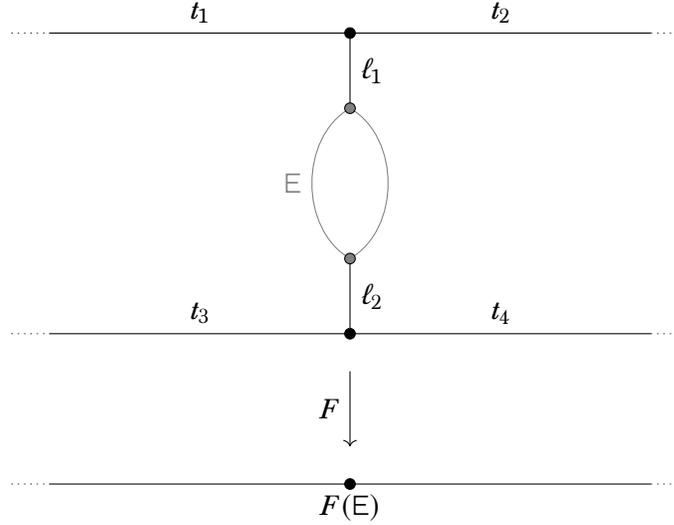

\begin{rem}
Condition \eqref{pt:well-spaced2} is not relevant to the rays of the tropical moduli space (i.e. divisors), since there are always at least two parameters involved, namely $F(\plE)$ - or the distance from $F(\plE)$ to the origin of $\RR_{\geq 0}$ - and the distance from $\plE$ to the first flag of non-zero slope within $\plC$. As we mostly concentrate on divisors starting from \S \ref{Section Gathmann line bundle}, it will not play much of a role in the rest of the paper.
\end{rem}

\begin{prop}\label{prop: well-spaced}
Let $[C\to \mathbb P^m]$ be a logarithmic stable map from a centrally aligned curve to an expansion, that factors completely. Then the tropicalization $\plC\to \mathbb R_{\geq 0}$ is well-spaced.
\end{prop}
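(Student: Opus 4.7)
The plan is to translate the algebro-geometric complete factorisation hypothesis into the tropical well-spacedness condition via the linear-algebraic description of maps from elliptic singularities (Corollary \ref{cor:maps_from_elliptic_sing}), in parallel with the absolute-case treatment of \cite[\S 4]{RSPW2}.

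If no open neighborhood of $\plE$ in $\plC$ is contracted by $F$ to a point in $\RR_{>0}$, condition (1) of well-spacedness holds. Otherwise, a neighborhood of $\plE$ is mapped constantly to $F(\plE)\in\RR_{>0}$: algebraically, the core $E\subset C$ together with a neighborhood is collapsed by $f$ to a point $\xi$ lying in a positive-level stratum of $\PP^m[s]$. By the factorisation condition, $f$ factors as $\bar f\circ\pi$ where $\pi\colon C\to\overline C_F$ contracts the disk of radius $\delta_F$ to a Gorenstein elliptic $m$-fold point $p$ with $\bar f(p)=\xi$, and the branches $R_1,\dots,R_m$ of $p$ correspond to the vertices on the circle of radius $\delta_F$.

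I then apply Corollary \ref{cor:maps_from_elliptic_sing}: the factorisation of $\bar f$ through $p$ is equivalent to $dg(v)=0$ for the induced map $g$ on the seminormalisation, where $v=\sum_i a_i e_i$ is the attaching vector with all $a_i\neq 0$ and $e_i\in T_pR_i$. In local coordinates at $\xi$ this is a linear relation among the leading terms of the restrictions $g|_{R_i}$ in directions normal to the stratum containing $\xi$. The tropical slopes $\mu_i$ of $F$ governing the exits from the contracted region appear as the leading exponents of these expansions: $z\circ g|_{R_i}=c_i t_i^{\mu_i}+O(t_i^{\mu_i+1})$ for a local defining equation $z$ of the relevant divisor. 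The relation $dg(v)=0$ admits a solution with all $a_i\neq 0$ only when the minimum exponent $\min_i\mu_i$ is achieved by at least two branches, which is exactly condition (2).

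The main difficulty is handling the case where the minimum-distance nonzero-slope flag occurs further out than $\delta_F$, i.e.\ when intermediate components at the top level separate the singularity branches from the actual exits. I plan to reduce this to the previous case by iterating the contraction: pass to a finer central alignment (with a larger $\delta$) which contracts these intermediate components into a deeper elliptic singularity, and apply the linear-algebraic cancellation there. Equivalently, after sufficient logarithmic modification one arrives at a tropical picture in which all minimum-distance exits occur precisely at the singularity branches, and the argument concludes as above.
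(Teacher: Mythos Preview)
Your approach via Corollary~\ref{cor:maps_from_elliptic_sing} is in the spirit of the analysis in~\cite[\S 4]{RSPW2}, but there is a genuine gap in the execution. The differential condition $dg(v)=0$ only constrains first-order data: projecting to the normal direction $z$, it reads $\sum_{i:\mu_i=1} a_i c_i = 0$, which forces the set $\{i:\mu_i=1\}$ to be empty or of size $\geq 2$. This is \emph{not} the content of condition~(2), which concerns the minimum \emph{distance} from $\plE$ to the vertices supporting nonzero-slope flags, not the minimum slope among the branches at radius $\delta_F$. Your identification ``which is exactly condition (2)'' is therefore incorrect. For instance, if every branch at $\delta_F$ has $\mu_i\geq 2$, the differential condition is vacuous in the $z$-direction and says nothing about~(2); and even when some $\mu_i=1$, the constraint you obtain is on slopes at the fixed radius $\delta_F$, whereas the vertices in~(2) are the boundary vertices of the contracted subgraph, which typically sit at various distances strictly less than $\delta_F$.

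The paper's proof avoids this by passing to a rational function on an open neighbourhood of the genus-one locus and invoking~\cite[Lemma~4.6.1]{RSPW2}: the pole orders at the bounding nodes recover the slopes, and well-spacedness is extracted from the global structure of such a function on a genus-one curve, not merely from the first jet at the singular point. Your proposed fix of iterating to a larger $\delta$ does not close the gap: the complete-factorisation hypothesis is tied to the specific radius $\delta_F$, and enlarging $\delta$ would bring into the disk components not contracted by $f$, so there is no reason the map factors through the deeper contraction, and hence no access to a differential relation there.
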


\begin{proof}
The required result is a consequence of~\cite[Section 4]{RSPW2}, and we explain how to deduce it. We may focus on a single component of the expansion $\mathbb P^m[s]$ that contains the image of a contracted genus one subcurve, as this is the only relevant case. We replace the target with the projective bundle $\mathbb P(\mathcal O(1)\oplus \mathcal O)$ over $\mathbb P^{N-1}$ equipped with the divisorial logarithmic structure from the $0$ and $\infty$ sections. Let $p$ be the point to which the genus one subcurve is contracted. Passing to an open neighbourhood of $p$, the map to the bundle is given by a rational function $f$ on an open curve $C^\circ$, determined by the subgraph formed by $\plE$ and the flags $t_i$ described above. To describe the tropical map to $\mathbb R_{>0}$, we observe that $\plE$ is contracted to a fixed point $q\in\mathbb R_{>0}$. The flags at $t_i$ correspond to nodes or markings of $C$. The pole orders of $f$ at these distinguished points determine the slopes of the tropical map. We are now in the situation considered in~\cite[Second Paragraph of \S~4.6]{RSPW2}, and Lemma~4.6.1 of loc. cit. guarantees the well-spacedness.
\end{proof}

\subsection{The cone complex} To understand the stratification via combinatorial data, we will build the stratification from known objects. \smallskip

\noindent
\textbf{Step 1}. Let $\Sigma^{\mathrm{log}}$ be the tropical moduli space of genus one tropical stable maps to $\mathbb R_{\geq 0}$. This is naturally identified with the tropicalization (in the logarithmic sense) of the Abramovich--Chen--Gross--Siebert space of logarithmic stable maps to the pair $(\mathbb P^m,H)$. \smallskip

\noindent
\textbf{Step 2}. Given such a tropical stable map, we may subdivide $\mathbb R_{\geq 0}$ such that the image of every vertex of $\plC$ is a vertex of $\mathbb R_{\geq 0}$. Call this subdivision $\widetilde{\mathbb R}_{\geq 0}$. The preimages of vertices of the subdivision form a subdivision of $\plC$. After this procedure, the images of vertices of $\plC$ which lie in $\RR_{>0}$ are totally ordered, in a manner extending the partial order obtained from the map to $\mathbb R_{\geq 0}$. The combinatorial types of such \textbf{image-ordered} maps produce the cones of a subdivision of $\Sigma^{\mathrm{log}}$ which we denote $\Sigma^{\mathrm{Kim}}$. As the notation suggests, $\Sigma^{\Kim}$ is the tropicalization of Kim's space of logarithmic maps to $(\PP^m,H)$.\smallskip

\noindent
\textbf{Step 3.} Given a tropical map $F: \plC\to \mathbb R_{\geq 0}$ parameterised by $\Sigma^{\mathrm{Kim}}$, there is a largest radius $\delta$ (possibly equal to $0$) such that every vertex strictly contained in the circle of radius $\delta$ around the core has degree-marking $d=0$. Let $
\Sigma^{\mathrm{cen}}$ be the subdivision obtained by requiring that the vertices contained within the circle of radius $\delta$ around the core of $\plC$ are totally ordered. This involves introducing cones along which certain $\Z$-linear combinations of edge lengths are identified.\smallskip

\noindent
\textbf{Step 4}. Let $\Sigma_{1,\alpha}(\mathbb P^m|H,d)$ be the subcomplex of $\Sigma^{\mathrm{cen}}$ consisting of well-spaced tropical maps.

\begin{prop}
The cone complex $
\Sigma_{1,\alpha}(\mathbb P^m|H,d)$ is the fan of $
\VZ_{1,\alpha}(\mathbb P^m|H,d)$ viewed as a toroidal embedding. In particular, the codimension $k$ strata of $
\VZ_{1,\alpha}(\mathbb P^m|H,d)$ are in inclusion reversing bijection with the dimension $k$ cones in $
\Sigma_{1,\alpha}(\mathbb P^m|H,d)$.
\end{prop}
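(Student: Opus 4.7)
The strategy is to propagate the four-step combinatorial construction to the moduli level, verifying at each stage that the cone complex agrees with the tropicalization of the corresponding moduli space, and then invoking the standard correspondence between the fan and the stratification of a toroidal embedding.

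Step 1 is standard: the Abramovich--Chen--Gross--Siebert moduli space $\ol\Mcal^{\log}_{1,\alpha}(\PP^m|H,d)$ has $\Sigma^{\log}$ as its tropicalization essentially by definition, since cones of $\Sigma^{\log}$ are the duals of the minimal monoids parametrising its logarithmic points. Step 2 corresponds to passing to Kim's moduli space of logarithmic maps to expanded degenerations: the image-ordering subdivision of $\RR_{\geq 0}$ at vertex-images is the tropical incarnation of the expansion $\PP^m[s]$, as recalled in the paragraph preceding \S\ref{subsection factorisation} and as established in \cite[\S 2]{R19}. Step 3 corresponds to the central alignment of the source curve, and is a logarithmic modification of Kim's space via the iterated stellar subdivision described in \cite[\S 4.6]{RSPW}. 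At the end of Step 3, the cone complex $\Sigma^{\cen}$ is therefore the tropicalization of $\widetilde\VZ_{1,\alpha}(\PP^m|H,d)$.

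The heart of the proof is Step 4: I must show that passing from $\widetilde\VZ_{1,\alpha}(\PP^m|H,d)$ to its closed substack $\VZ_{1,\alpha}(\PP^m|H,d)$ of completely factoring maps corresponds to restricting $\Sigma^{\cen}$ to the subcomplex of well-spaced tropical maps. One direction is Proposition \ref{prop: well-spaced}: any geometric point of $\VZ_{1,\alpha}(\PP^m|H,d)$ has a well-spaced tropicalization, hence lies on a cone of $\Sigma_{1,\alpha}(\PP^m|H,d)$. For the converse, I would take a cone $\tau\in\Sigma^{\cen}$ whose generic tropical map is well-spaced and argue that the corresponding stratum of $\widetilde\VZ_{1,\alpha}(\PP^m|H,d)$ lies in $\VZ_{1,\alpha}(\PP^m|H,d)$: well-spacedness is precisely the combinatorial shadow of the existence of the contraction $\widetilde C\to \overline C_F$ described in \S\ref{subsection factorisation}, as formalised in \cite[Lemma 4.6.1]{RSPW2}, and the vanishing of the obstruction in $H^1(\overline C_F,\mathcal O)$ analysed in the proof of Theorem \ref{thm: log-smoothness} is governed by exactly this combinatorial data.

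Once the cone complex $\Sigma_{1,\alpha}(\PP^m|H,d)$ is identified with the tropicalization of $\VZ_{1,\alpha}(\PP^m|H,d)$, the final statement follows from the general principle for logarithmically smooth stacks: by Theorem \ref{thm: log-smoothness}, $\VZ_{1,\alpha}(\PP^m|H,d)$ is a toroidal orbifold, so its boundary strata are in inclusion-reversing bijection with the cones of its fan, with dimensions summing to the total dimension. The main obstacle I anticipate is verifying the converse direction in Step 4 with sufficient care to identify \emph{open} strata (not just closed ones) of $\widetilde\VZ_{1,\alpha}$ with combinatorially well-spaced types; this requires checking that well-spacedness is an open condition on cones of $\Sigma^{\cen}$ compatible with face inclusions, so that the union of admissible cones forms a subcomplex rather than a mere subset.
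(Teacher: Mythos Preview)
Your approach is essentially the same as the paper's: track the four-step construction at the level of tropicalizations, match Steps 1--3 to the passage from Abramovich--Chen--Gross--Siebert to Kim to the centrally aligned space, and then use Proposition~\ref{prop: well-spaced} for Step 4. The paper's proof is terser than yours: it handles Steps 1--3 by noting that $\Sigma^{\mathrm{cen}}$ is the cone complex of $\widetilde\VZ_{1,\alpha}(\PP^m|H,d)$ via its description as a subcategory of the fibred category of logarithmic maps, and then for Step 4 observes only that the inclusion $\VZ_{1,\alpha}(\PP^m|H,d)\hookrightarrow\widetilde\VZ_{1,\alpha}(\PP^m|H,d)$ is strict, so the cone complex of the former is a subcomplex of $\Sigma^{\mathrm{cen}}$, and is contained in the well-spaced locus by Proposition~\ref{prop: well-spaced}.

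You are right to flag the converse direction in Step 4 as the delicate point; the paper does not argue it explicitly, relying instead on the opening sentence that the combinatorial construction was designed to mirror the moduli construction. Your sketch of the converse---that well-spacedness is exactly the combinatorial condition allowing the contraction of~\cite[Lemma~4.6.1]{RSPW2} and the factorisation---is correct in spirit and is what underlies the paper's implicit claim. Your care here adds rigour that the paper's proof leaves to the reader.
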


\begin{proof}
The construction above has been given to mimic the construction of the space $
\VZ_{1,\alpha}(\mathbb P^m|H,d)$. Specifically, the fact that $\Sigma^{\mathrm{cen}}$ is the cone complex attached to the logarithmic stack $\widetilde{\VZ}_{1,\alpha}(\mathbb P^m|H,d)$ of centrally aligned maps to expansions, follows from its description as a subcategory of the fibred category (over logarithmic schemes) of $\mathcal M^{\mathrm{log}}_{1,\alpha}(\mathbb P^m|H,d)$. To complete the result, note that $
\VZ_{1,\alpha}(\mathbb P^m|H,d)$ has a strict map to $
\VZ_{1,\alpha}(\mathbb P^m|H,d)$, so its cone complex is a subcomplex of $\Sigma^{\mathrm{cen}}$. It is contained in the subcomplex of well-spaced curves is by Proposition~\ref{prop: well-spaced}.
\end{proof}

\subsection{Indexing the strata} \label{subsection indexing strata} The dimension-$k$ cones in $\Sigma_{1,\alpha}(\PP^m|H,d)$ can be enumerated as follows. First, the cones in $\Sigma=\Sigma^{\Kim}$ are indexed by \textbf{combinatorial types} of tropical maps to $\RR_{\geq 0}$. Here a combinatorial type encodes all of the data of a tropical map, except for the edge lengths. To be more precise, a combinatorial type $\Delta$ consists of:
\begin{enumerate}
\item a finite graph $\sqC$ and topological (not metric) subdivision $\widetilde\RR_{\geq 0}$ of $\RR_{\geq 0}$;
\item genus, degree and marking assignments on the vertices;
\item the data of which vertex of $\widetilde\RR_{\geq 0}$ each vertex of $\sqC$ is mapped to;
\item integral slopes along the edges (both finite and infinite).
\end{enumerate}
The corresponding cone $\sigma \in \Sigma^{\operatorname{Kim}}$ is then given by the resulting moduli space of tropical maps, given by choices of edge lengths on source and target which produce a continuous tropical map. Given $\sigma\in \Sigma^{\operatorname{Kim}}$ we then produce all the cones in $\Sigma_{1,\alpha}(\PP^m|H,d)$ mapping to $\sigma$ by performing Steps 3--4 outlined above. This amounts to a subdivision of $\sigma$. In this process, new cones are created which map into larger-dimensional cones of $\Sigma^{\operatorname{Kim}}$. The process for enumerating the codimension $k$ strata of $\VZ_{1,\alpha}(\PP^m|H,d)$ is therefore:
\begin{enumerate}
\item fix a combinatorial type $\Delta$;
\item perform the subdivision of the resulting cone $\sigma \in \Sigma^{\operatorname{Kim}}$;
\item identify the dimension $k$ cones of that subdivision.
\end{enumerate}
In \S \ref{section reduced splitting} we perform this analysis in the case $k=1$.

\section{Degeneration of contact order}\label{Section Gathmann line bundle}
\noindent Consider the moduli space $\VZ_{1,\alpha}(\PP^m|H,d)$ of relative maps that factor completely.
\begin{dfn}
 For a marking $x_k$, consider the locus $\Dcal_{1,\alpha,k}(\PP^m|H,d) \subseteq \VZ_{1,\alpha}(\PP^m|H,d)$ where $x_k$ belongs to an \textbf{internal component} of the collapsed map, i.e. a component of the curve that is mapped into $H$.
\end{dfn}
\noindent In this section, we use the logarithmic structure on $\VZ_{1,\alpha}(\PP^m|H,d)$ to construct a line bundle $\Lcal_k$ and section $s_k$ which vanishes precisely along $\Dcal_{1,\alpha,k}(\PP^m|H,d)$. We use the correspondence with tropical geometry to identify $\cchern_1(\Lcal_k)$ in terms of tautological classes on $\VZ_{1,\alpha}(\PP^m|H,d)$, and to compute the vanishing order of $s_k$ along the components of $\Dcal_{1,\alpha,k}(\PP^m|H,d)$. Combined with the relative splitting formulas from the next section, this produces a recursion relation inside $\VZ_{1,\alpha}(\PP^m|H,d)$

The pair $(\Lcal_k,s_k)$ is natural on the moduli space $\MLog_{1,\alpha}(\PP^m|H,d)$ of non-expanded logarithmic maps; the corresponding pair on $\VZ_{1,\alpha}(\PP^m|H,d)$ will be obtained via pull-back. Consider therefore the tropicalization $\Sigma^{\log}$ of $\MLog_{1,\alpha}(\PP^m|H,d)$, identified as usual with the moduli space of tropical maps to $\RR_{\geq 0}$. We have a universal family
\bcd
\sqC \ar[d,"\pi"] \ar[r,"f"] & \RR_{\geq 0} \\
\Sigma^{\log} \ar[u, bend left=40pt, "x_k" left]
\ecd
where $x_k$ is the section which for every point $\lambda \in \Sigma^{\log}$ picks out the vertex of $\sqC_\lambda$ supporting the leg $x_k$. The composition $f \circ x_k \colon \Sigma^{\log} \to \RR_{\geq 0}$ defines a piecewise-linear function on $\Sigma^{\log}$ whose preimage over the open cone $\RR_{>0}$ consists of those tropical maps where $x_k$ belongs to an internal component. This produces a section of the ghost sheaf on $\MLog_{1,\alpha}(\PP^m|H,d)$, which in the usual way induces a line bundle and section $(\Lcal_k,s_k)$ on the moduli space, and the tropical description above shows that the zero locus of $s_k$ is (set-theoretically) the locus where $x_k$ belongs to an internal component.\medskip

We now calculate $\cchern_1(\Lcal_k)$. 
\begin{thm} $\cchern_1(\Lcal_k) = \alpha_k \psi_k + \ev_k^\st H.$\end{thm}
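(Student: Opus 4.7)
The plan is to identify $\Lcal_k$ with the restriction along $x_k$ of a natural logarithmic line bundle on the universal curve, then invoke the standard relation between log pullbacks at markings and $\psi$-classes. Since all three classes in the statement are pulled back from $\MLog_{1,\alpha}(\PP^m|H,d)$, I would first prove the identity there and then transfer via the logarithmic modification.

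On $\MLog_{1,\alpha}(\PP^m|H,d)$, let $\tilde f\colon \mathcal C \to \PP^m$ denote the universal logarithmic map. Its tropicalization is encoded by a PL function $\alpha \in \Gamma(\mathcal C, \overline M_{\mathcal C/\MLog}^{\mathrm{gp}})$ — the same function entering the proof of Theorem~\ref{thm: log-smoothness}. The general correspondence between logarithmic maps to $(\PP^m,H)$ and PL functions on the source identifies
$$\tilde f^* \OO_{\PP^m}(H) \;=\; \OO_{\mathcal C}(\alpha).$$

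The crux is a local decomposition of $\alpha$ near the marked divisor $D_k = x_k(\MLog)$. Writing $\pi\colon \mathcal C \to \MLog$ and using that near $D_k$ one has $\overline M_{\mathcal C} = \pi^*\overline M_{\MLog} \oplus \NN e_k$, where $e_k$ is the standard generator at the marking, I would verify
$$\alpha \;=\; \pi^*\beta + \alpha_k\, e_k$$
in an open neighbourhood of $D_k$, where $\beta = f \circ x_k$ is precisely the PL function on the base defining $\Lcal_k$. Tropically this is nothing more than the statement that the PL function $\alpha$ has outward slope $\alpha_k$ along the leg at $x_k$ (which is the prescribed contact order) and value $\beta$ at the supporting vertex; algebraically it is the local form of the contact order condition for the log map.

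Pulling back by $x_k$, the $\pi^*\beta$ factor yields $\Lcal_k$ by definition, while $\OO_{\mathcal C}(e_k) = \OO_{\mathcal C}(D_k)$ under the standard convention, and adjunction along $D_k$ identifies $x_k^*\OO_{\mathcal C}(D_k) = N_{D_k/\mathcal C} = T_{x_k}$. Hence
$$\ev_k^\st \OO(H) \;=\; \Lcal_k \otimes T_{x_k}^{\otimes \alpha_k}.$$
Taking first Chern classes and using $\cchern_1(T_{x_k}) = -\psi_k$ gives $\ev_k^\st H = \cchern_1(\Lcal_k) - \alpha_k \psi_k$, which rearranges to the claim. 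Pullback to $\VZ_{1,\alpha}(\PP^m|H,d)$ along the logarithmic modification completes the proof, since all three classes are natural under this map.

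The main substantive point is the decomposition $\alpha = \pi^*\beta + \alpha_k e_k$ near the marking, which requires unpacking the local structure of $\overline M_{\mathcal C}$ at $D_k$ and matching the outward slope of the tropical map with the algebraic contact order; everything else is bookkeeping with log line bundles on marked curves and the definition of $\psi_k$.
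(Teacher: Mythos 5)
Your proof is correct and is essentially the same argument as the paper's: both identify $f^*\OO(H)$ with the line bundle of the PL function $f^\flat(1)$, compare against the PL function $\beta = f\circ x_k$ pulled back from the base (which is the definition of $\Lcal_k$), observe that the difference has slope $\alpha_k$ along the leg at $x_k$, and pull back at the marking to get $\ev_k^*\OO(H) = \Lcal_k\otimes T_{x_k}^{\otimes\alpha_k}$. The only variation is that you establish the key decomposition of the PL function by a local analysis of the ghost sheaf near the section $D_k$ (using $\overline M_{\mathcal C}=\pi^*\overline M\oplus\NN e_k$ there), whereas the paper restricts the difference $f^\flat(1)-\pi^\flat(\mu)$ to the whole component $C_v$ containing $x_k$ and cites \cite[Proposition~2.4.1]{RSPW} to identify it with the divisor $\alpha_k x_k + \sum_e \mu_e x_e$; the two are interchangeable since the extra divisor supported at the nodes dies on restriction along $x_k$.
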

\begin{proof}
Choose a family of logarithmic stable maps over $S$ and let $\mu \in \Gamma(S,\ol{M}_S)$ be the global section of the ghost sheaf constructed in the previous paragraph. This pulls back along $\pi$ to give a global section $\pi^\flat(\mu) \in \Gamma(C,\ol{M}_C)$.  Interpreted as a piecewise-linear function on the tropicalization $\sqC$ with values in $\ol{M}_S$ \cite[Remark 7.3]{CavalieriChanUlirschWise}, this assigns $\mu$ to every vertex and has slope zero along every edge. By construction, the line bundle associated to this section is $\pi^\st \Lcal_k$. Consider on the other hand the generator $1 \in \N = \Gamma(\PP^m,\ol{M}_{\PP^m})$ with associated line bundle $\OO(H)$. The section $f^\flat(1) \in \Gamma(C,\ol{M}_C)$ has associated line bundle $f^\st\OO(H)$. If we let $v$ denote the vertex containing $x_k$, then by construction $f^\flat(1)$ assigns $\mu$ to $v$ and has slope $\alpha_k$ along the leg $x_k$. Thus if we consider the difference $f^\flat(1) - \pi^\flat(\mu)$ then this assigns $0$ to $v$ and still has slope $\alpha_k$ along $x_k$. Thus by \cite[Proposition 2.4.1]{RSPW} the corresponding line bundle restricted to $C_v$ is given by
\begin{equation*} \OO_{C_v} \left(\alpha_k x_k + \sum_e \mu_e x_e \right) \end{equation*}
where the sum is over the edges $e$ adjacent to $v$ and distinct from $x_k$. Thus we have:
\begin{equation*} \left( f^\st\OO(H) \otimes \pi^\st \Lcal_k^{-1} \right) \big|_{C_v} = \OO_{C_v} \left(\alpha_k x_k + \sum_e \mu_e x_e \right).\end{equation*}
Since $x_k$ factors through $C_v$ we may pull back along $x_k$ to obtain
\begin{equation*} \Lcal_k = x_k^\st\pi^\st \Lcal_k = x_k^\st \OO_{C_k}(-\alpha_k x_k) \otimes x_k^\st f^\st\OO(H) = x_k^\st \OO_{C_k}(-\alpha_k x_k) \otimes \ev_k^\st \OO(H) \end{equation*}
and taking Chern classes gives
\begin{equation*} \cchern_1(\Lcal_k) = \alpha_k \psi_k + \ev_k^\st H,\end{equation*}
as required.\end{proof}
This produces $(\Lcal_k,s_k)$ and calculates $\cchern_1(\Lcal_k)$ on $\MLog_{1,\alpha}(\PP^m|H,d)$; this bundle is pulled back to $\VZ_{1,\alpha}(\PP^m|H,d)$. The relevant piecewise-linear function is the composition:
\begin{equation*} \Sigma_{1,\alpha}(\PP^m|H,d) \to \Sigma^{\log} \to \RR_{\geq 0}.\end{equation*}
Note in particular that $\psi_k$ should be interpreted as a \textbf{collapsed class} on $\VZ_{1,\alpha}(\PP^m|H,d)$, i.e. the cotangent class from the source curve of the collapsed and stabilised map. We arrive at the recursion. 

\begin{thm} \label{theorem recursion} We have the following relation in the Chow ring of $\VZ_{1,\alpha}(\PP^m|H,d)$:
\begin{equation}\label{equation recursion} (\alpha_k \psi_k + \ev_k^\st H) \cap [\VZ_{1,\alpha}(\PP^m|H,d)] = \sum_{\Dcal} \lambda_\Dcal [ \Dcal ].\end{equation}
The sum is over irreducible components of the divisor $\Dcal_{1,\alpha,k}(\PP^m|H,d) \subseteq \VZ_{1,\alpha}(\PP^m|H,d)$, and $\lambda_\Dcal$ is the vanishing order of $s_k$ along this component. \end{thm}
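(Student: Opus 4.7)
The plan is to deduce the theorem as a formal consequence of the preceding Chern-class computation together with the standard principle that the first Chern class of a line bundle is represented by the vanishing divisor of any section that does not vanish generically.

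First, I would invoke the previous theorem to identify $\cchern_1(\Lcal_k) = \alpha_k \psi_k + \ev_k^\st H$, so the left-hand side of \eqref{equation recursion} is literally $\cchern_1(\Lcal_k) \cap [\VZ_{1,\alpha}(\PP^m|H,d)]$. Next, I would apply the standard formula: if $\Lcal$ is a line bundle on a pure-dimensional Deligne--Mumford stack $X$ and $s \in \Gamma(X, \Lcal)$ is a section that does not vanish identically on any irreducible component of $X$, then
\[
\cchern_1(\Lcal) \cap [X] \;=\; [Z(s)] \;=\; \sum_{D} \op{ord}_D(s)\, [D],
\]
where the sum runs over the irreducible components of the zero locus $Z(s)$ and $\op{ord}_D(s)$ is the order of vanishing of $s$ along $D$. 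Applied to $(\Lcal_k, s_k)$, this yields exactly the right-hand side of \eqref{equation recursion}, with the $\lambda_\Dcal$ appearing tautologically as vanishing orders.

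The only substantive thing to verify, and what I would treat as the main (if still modest) obstacle, is that $s_k$ is a regular section, i.e. that it does not vanish on the generic point of $\VZ_{1,\alpha}(\PP^m|H,d)$. By Theorem~A, $\VZ_{1,\alpha}(\PP^m|H,d)$ is irreducible and contains the open locus $\Mcal_{1,\alpha}^\circ(\PP^m|H,d)$ of maps from smooth genus one curves as a dense open substack. Over this interior, the source curve is smooth and irreducible; in particular there are no contracted components mapping into $H$, so $x_k$ cannot lie on an internal component, and $s_k$ is nonzero. Equivalently, tropically the PL function obtained by composing $\Sigma_{1,\alpha}(\PP^m|H,d) \to \Sigma^{\log} \to \RR_{\geq 0}$ and evaluating along the section $x_k$ is not the zero function (it is zero only on the cones parametrising maps where $x_k$ sits on an internal component, and the interior of the moduli space corresponds to the zero cone).

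With non-triviality of $s_k$ in hand, the set-theoretic description of $Z(s_k)$ from the construction at the start of Section~\ref{Section Gathmann line bundle} identifies the support of the divisor of zeros with $\Dcal_{1,\alpha,k}(\PP^m|H,d)$. Decomposing this divisor into its irreducible components $\Dcal$ (which are indexed by the relevant dimension-one cones of $\Sigma_{1,\alpha}(\PP^m|H,d)$ as in \S\ref{subsection indexing strata}) and reading off the integer coefficients from the vanishing orders of $s_k$, one arrives directly at the claimed identity.
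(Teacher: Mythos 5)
Your argument is correct and supplies exactly the standard Cartier-divisor computation that the paper leaves implicit after constructing $(\Lcal_k,s_k)$ and computing $\cchern_1(\Lcal_k)$: nontriviality of $s_k$ on the dense open interior (where the source is smooth and positive degree forbids mapping into $H$) forces $Z(s_k)$ to be a genuine effective Cartier divisor, whose class is $\cchern_1(\Lcal_k)\cap[\VZ_{1,\alpha}(\PP^m|H,d)]$ with coefficients the vanishing orders. One small refinement: irreducibility of the moduli space is not actually needed — density of the interior locus in every component (which holds since the space is pure-dimensional by logarithmic smoothness and contains the interior as a dense open) already guarantees $s_k$ is nonvanishing on each component, which is all the argument requires.
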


\begin{remark} This construction gives the logarithmic analogue of Gathmann's line bundle and section \cite[Construction 2.1]{Ga}. The logarithmic approach makes the computation of vanishing orders combinatorial (see \S \ref{subsubsection splitting degree} below), circumventing a difficult calculation given by Gathmann. \end{remark}

\section{Splitting the boundary}\label{section reduced splitting}

\noindent A basic phenomenon is responsible for the nature of the forthcoming analysis. In contrast with nodal singularities, the functions on an elliptic singularity are not simply collections of functions on the normalisation that agree at the gluing point. Rather, the contraction map $C\to\overline C$ corresponds to a particular linear combination of tangent vectors at the pointed normalisation, whose kernel consists of those functions descending to the elliptic singularity (see \S \ref{S:ellsing}). In order to compute integrals, it is necessary to give a tautological description of this additional condition. 

We provide an explicit description of the terms appearing in the right-hand side of \eqref{equation recursion}.  For each term, we provide a combinatorial formula for the vanishing order $\lambda_\Dcal$, and a recursive description of the stratum $\Dcal$ 
in terms of fibre products of moduli spaces with smaller numerical data. The tautological description of the factorisation condition then leads to a recursive structure for calculating integrals. This step is the heart of the analysis. 

\begin{remark} For the reduced relative Gromov--Witten theory of $(\mathbb P^m,H)$ with only primary insertions, i.e. where all cohomological insertions are pulled back along evaluation maps, computations can be carried out without an analysis of the factorisation condition; this follows from dimension considerations and leads to the main results in~\cite{Vre}. However, to include descendant insertions, work with more general pairs, or produce cycle-level statements, an analysis of the tautological classes arising from the factorisation condition is crucial.
\end{remark}

\subsection{Irreducible components of the degenerate locus} We explain the tropical procedure to identify irreducible components of $\Dcal_{1,\alpha,k}(\PP^m|H,d)$.

\begin{lemma} \label{Lemma components are logarithmic divisors} Every irreducible component of $\Dcal_{1,\alpha,k}(\PP^m|H,d)$ is a codimension $1$ stratum.\end{lemma}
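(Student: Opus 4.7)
The plan is to derive the lemma directly from the construction of Section \ref{Section Gathmann line bundle}, combined with the fact that $\VZ_{1,\alpha}(\PP^m|H,d)$ is toroidal with cone complex $\Sigma_{1,\alpha}(\PP^m|H,d)$. By construction, $\Dcal_{1,\alpha,k}(\PP^m|H,d)$ is the support of the vanishing locus of the section $s_k$ of the line bundle $\Lcal_k$. Since $\VZ_{1,\alpha}(\PP^m|H,d)$ is logarithmically smooth by Theorem~\ref{thm: log-smoothness}, and $V(s_k)$ is the zero scheme of a section of a line bundle, it is an effective Cartier divisor; in particular its support has pure codimension one. This disposes of the dimension statement.

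What remains is to identify every irreducible component of $V(s_k)$ with a closed boundary stratum, i.e.\ with a ray of the cone complex. First I would recall that the pair $(\Lcal_k,s_k)$ was produced by pullback from the piecewise-linear function $\mu = f\circ x_k$ on $\Sigma^{\log}$, and hence is itself described by the pulled-back piecewise-linear function on $\Sigma_{1,\alpha}(\PP^m|H,d)$. The standard dictionary between piecewise-linear functions on the cone complex of a toroidal embedding and Cartier divisors supported on the boundary then gives
\begin{equation*}
V(s_k) \;=\; \sum_{\rho} \mu(\rho)\, D_{\rho},
\end{equation*}
where $\rho$ ranges over the rays of $\Sigma_{1,\alpha}(\PP^m|H,d)$, $D_{\rho}$ is the corresponding codimension-one closed stratum of $\VZ_{1,\alpha}(\PP^m|H,d)$, and $\mu(\rho) \in \mathbb{Z}_{\geq 0}$ is the value of $\mu$ on the primitive generator of $\rho$. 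The set-theoretic support of $V(s_k)$ is therefore the union of those $D_{\rho}$ for which $\mu(\rho) > 0$, and each such $D_{\rho}$ is by definition a codimension-one closed toroidal stratum; its irreducibility follows from the toroidal correspondence between rays and boundary divisors.

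The only step that requires genuine verification is the dictionary: one needs that on our logarithmically smooth orbifold, piecewise linear functions on the cone complex really do correspond to Cartier divisors supported on the boundary in the way claimed. This is standard for toroidal embeddings, and transfers to our setting via the strict map to $\MLog_{1,\alpha}(\PP^m|H,d)$ used to define $(\Lcal_k,s_k)$; the only mild care needed is that the orbifold structure (the cones are simplicial but not necessarily smooth) does not spoil the correspondence, which is handled by passing to a smooth cover. I do not anticipate this as a serious obstacle, since strictness of $\VZ_{1,\alpha}(\PP^m|H,d)\to \MLog_{1,\alpha}(\PP^m|H,d)$ and the pullback description of $(\Lcal_k,s_k)$ are already part of the setup. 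The detailed identification of \emph{which} rays contribute (i.e.\ an explicit description of the combinatorial types with $\mu(\rho)>0$) is deferred to the enumeration of strata in \S\ref{subsection indexing strata}.
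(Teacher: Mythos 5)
Your proof is correct. It differs from the paper's mainly in how the irreducible components are matched with boundary divisors: the paper observes that the defining condition ($x_k$ lies on an internal component) depends only on the combinatorial type and hence is constant on logarithmic strata, so $\Dcal_{1,\alpha,k}(\PP^m|H,d)$ is a union of closures of strata, and then concludes by combining this with pure codimension one; you instead invoke the piecewise-linear-function-to-Cartier-divisor dictionary, which writes $V(s_k)$ explicitly as a weighted sum of ray divisors $D_\rho$. Both arguments rest on the same two ingredients---the toroidal structure of $\VZ_{1,\alpha}(\PP^m|H,d)$ and the identification of $\Dcal_{1,\alpha,k}(\PP^m|H,d)$ with the support of $V(s_k)$---so neither buys anything substantive over the other; yours is marginally more explicit, the paper's more economical. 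One small gloss worth adding: to conclude that $V(s_k)$ has pure codimension one, one must note that $s_k$ does not vanish identically, which holds because $s_k$ is a unit on the dense open locus $\mathcal M_{1,\alpha}^\circ(\PP^m|H,d)$ where the logarithmic structure is trivial; this is precisely the opening observation of the paper's proof, and your proposal uses it implicitly but does not state it.
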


\begin{proof} The locus where the logarithmic structure is trivial is the locus where the source curve is smooth and not mapped inside $H$. Since $\VZ_{1,\alpha}(\PP^m|H,d)$ is toroidal this locus is open and dense. By definition $\Dcal_{1,\alpha,k}(\PP^m|H,d)$ is contained in the  complement of this locus and can be expressed as a union of strata of positive codimension. Since $\Dcal_{1,\alpha,k}(\PP^m|H,d)$ is pure codimension $1$, it is a union of logarithmic divisors.\end{proof}

\noindent In \S \ref{subsection indexing strata} we described the logarithmic strata of $\VZ_{1,\alpha}(\PP^m|H,d)$, using the cones of the tropicalization $\Sigma_{1,\alpha}(\PP^m|H,d)$. Every divisorial logarithmic stratum is obtained as follows:
\begin{enumerate}
\item choose a combinatorial type $\Delta$ of a tropical map;
\item subdivide the corresponding tropical moduli space $\sigma$ to align the type;
\item choose a ray in this subdivision.
\end{enumerate}
This process contains redundancies: upon choosing a ray in the tropical moduli space, some of the edge lengths or vertex positions may get set to $0$. This induces a generisation of the initial combinatorial type $\Delta$, given by contracting the corresponding edges of the dual graph and moving the corresponding vertices from $\RR_{>0}$ to $0$. When we speak of the combinatorial type of a stratum in $\VZ_{1,\alpha}(\PP^m|H,d)$, we will always mean this generisation. This is independent of the choice of $\Delta$, and in fact we can and will always choose $\Delta$ to coincide with the generisation.

The logarithmic divisors contained in $\Dcal_{1,\alpha,k}(\PP^m|H,d)$ are those whose  combinatorial types map the vertex of the dual graph containing $x_k$  into the interior $\RR_{>0} \subseteq \RR_{\geq 0}$. Thus via the above procedure, we enumerate the components of $\Dcal_{1,\alpha,k}(\PP^m|H,d)$ in a combinatorial manner; the combinatorial type $\Delta$ allows us to describe the general element of such a component. The remainder of this section describes the components and a recursive computation of integrals over them.

\subsection{Recursive description of the divisors: types I, II and III} Choose an irreducible component $\Dcal \subseteq \Dcal_{1,\alpha,k}(\PP^m|H,d)$. By Lemma \ref{Lemma components are logarithmic divisors} this is a logarithmic divisor, and hence may be written as:
\begin{equation*} \Dcal = \widetilde\Dcal \cap \VZ_{1,\alpha}(\PP^m|H,d) \end{equation*}
for a unique logarithmic stratum $\widetilde\Dcal \subseteq \widetilde{\VZ}_{1,\alpha}(\PP^m|H,d)$. Since
\begin{equation*}\widetilde{\VZ}_{1,\alpha}(\PP^m|H,d) \to \ol\Mcal_{1,\alpha}(\PP^m|H,d)\end{equation*}
is a logarithmic modification, the divisor $\widetilde{\Dcal}$ is either exceptional or the strict transform of a logarithmic divisor. By the nature of the subdivision procedure these two cases correspond, respectively, to when the core is assigned zero degree or nonzero degree by the combinatorial type.

We begin with the latter. Suppose therefore that $\widetilde{\Dcal}$ is the strict transform of a logarithmic divisor:
\begin{equation*} \Ecal \subseteq \ol\Mcal_{1,\alpha}(\PP^m|H,d). \end{equation*}
The birational map $\widetilde{\Dcal} \to \Ecal$ induces a morphism $\Dcal \to \Ecal$. We now show how to interpret this morphism as a desingularisation of the principal component. Since $\Ecal$ admits a recursive description in terms of relative and rubber moduli spaces, this will allow us to compute integrals over $\Dcal$.

\begin{lemma}\label{lem:combs} Let $\Dcal \subseteq \Dcal_{1,\alpha,k}(\PP^m|H,d)$ be an irreducible component which contributes nontrivially to the Gromov--Witten invariant, and let $\Delta$ be the corresponding combinatorial type. If $\Delta$ assigns positive degree to the core, then $\Delta$ takes one of the following forms:
\begin{figure}[h]
    \centering
    \begin{minipage}{0.3\textwidth}
        \centering
        \Yagraph
        \caption{I}
    \end{minipage}\hfill
    \begin{minipage}{0.3\textwidth}
        \centering
        \Ybgraph
        \caption{II}
    \end{minipage}\hfill
    \begin{minipage}{0.3\textwidth}
        \centering
        \Ycgraph
        \caption{III}
    \end{minipage}
   
\end{figure}

\noindent where black vertices represent rational components, white vertices components of genus one. The degrees of vertices, expansions factors, and remaining markings are distributed arbitrarily, subject to the following constraints:
\begin{enumerate}
\item the vertices $\sqC_1,\ldots,\sqC_r$ have positive degree;
\item the core has positive degree;
\item every vertex is stable;
\item the balancing condition is satisfied.
\end{enumerate}\end{lemma}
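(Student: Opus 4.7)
\begin{sketch}
The plan is to translate each hypothesis of the lemma into a combinatorial constraint on the type $\Delta$ of a divisor $\Dcal$, and then to enumerate the surviving possibilities. Following \S\ref{subsection indexing strata}, every irreducible divisor $\Dcal \subseteq \Dcal_{1,\alpha,k}(\PP^m|H,d)$ arises from a combinatorial type $\Delta$ together with a ray in the aligned subdivision of the associated cone $\sigma_\Delta$. Since $\Dcal$ lies in $\Dcal_{1,\alpha,k}$, the type $\Delta$ must send the vertex $\sqC_0$ carrying the marking $x_k$ into the open ray $\RR_{>0}$.

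The first key step is to argue that all positive-degree vertices of $\Delta$ map to a single common point $\diamond \in \RR_{>0}$. The aligned cone $\sigma_\Delta$ is one-dimensional, so essentially one free continuous parameter is available, and I identify this parameter with the height $\diamond$ of a cluster of positive-degree vertices. The positions of distinct positive-degree vertices in $\RR_{>0}$ would otherwise provide independent parameters; any two such vertices at different heights would push the dimension of $\sigma_\Delta$ to at least two. The core has positive degree by hypothesis, so it lies at $\diamond$; the vertex $\sqC_0$ lies at $\diamond$ by the previous paragraph.

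Next I would classify by the shape of the core and its position relative to $\sqC_0$. The core is either a single vertex of geometric genus one or a cycle of rational vertices. In the first case, it either coincides with $\sqC_0$ (giving Type III), or it is a distinct vertex $\sqC_1$ adjacent to $\sqC_0$ (giving Type I); any path of intermediate rational vertices between the core and $\sqC_0$ at height $\diamond$ would either be unstable and therefore contracted, or else contribute extra independent parameters. In the second case, the edges of the cycle contribute additional independent parameters to $\sigma_\Delta$, and the codimension-one constraint forces the cycle to have length exactly two, passing through $\sqC_0$ and a second rational vertex $\sqC_1$ joined to it by a double edge (giving Type II).

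The main obstacle I anticipate lies in this last reduction: ruling out longer cycles in the core, chains of intermediate stable zero-degree rational vertices between $\sqC_0$ and the positive-degree vertices $\sqC_i$, and hybrid topologies combining such features. The argument will require tracing through the alignment subdivision of \S\ref{subsection indexing strata} to see that such configurations correspond only to higher-codimension cones in $\Sigma_{1,\alpha}(\PP^m|H,d)$, and, for the residual cases, invoking the non-triviality hypothesis together with a virtual dimension count to eliminate their contribution to the invariant. The listed constraints are then routine: the labeling of $\sqC_1,\ldots,\sqC_r$ as the positive-degree vertices at $\diamond$ is a naming convention; stability of every vertex is part of the data of a stratum; and balancing at each vertex is built into the moduli problem of tropical stable maps.
\end{sketch}
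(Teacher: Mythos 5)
There is a genuine gap, centred on a misreading of the tropical combinatorics. You assert that all positive-degree vertices of $\Delta$ lie over a single point $\diamond \in \RR_{>0}$ and deduce that the core lies over $\diamond$. This is backwards: in Types I and II the core lies over $0 \in \RR_{\geq 0}$, the positive-degree vertices $\sqC_1,\ldots,\sqC_r$ all lie over $0$, and the unique vertex lying over $\diamond$ is $\sqC_0$, which may well have degree zero. The edges from $\sqC_0$ to the $\sqC_i$ carry nonzero expansion factors, which forces the $\sqC_i$ to sit strictly closer to $0$ than $\sqC_0$. Consequently the classification you sketch -- ``a distinct vertex $\sqC_1$ adjacent to $\sqC_0$ at height $\diamond$'' for Type I, a cycle of vertices at height $\diamond$ for Type II -- is not what the figures depict, and the configurations you try to eliminate are being analysed against the wrong model.

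Two further ingredients are missing. First, you take for granted that the cone $\sigma_\Delta$ is one-dimensional, but a priori the ray giving $\Dcal$ sits inside the aligned subdivision of a potentially higher-dimensional cone of $\Sigma^{\Kim}$. The observation that makes the argument go is that positive degree on the core forces both alignment radii $\delta_F$ and $\delta_B$ to vanish, so the alignment subdivision is trivial; hence $\sigma_\Delta = \RR_{\geq 0}$, the tropical target has exactly one expansion vertex $\diamond$, and the source dual graph must be bipartite over $0$ and $\diamond$. Second, the mechanism ruling out the remaining bipartite configurations -- those with more than one vertex over $\diamond$, including cycles of length greater than two -- is not the dimension count you propose: such graphs still produce one-dimensional cones, since all vertices over $\diamond$ share the single height parameter. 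They are eliminated instead by a projection-formula argument: the composite $\Dcal \to \Ecal$ followed by the collapse into the corresponding stratum of the unexpanded logarithmic moduli space has positive-dimensional fibres precisely when the associated cone in the unexpanded tropicalization has dimension at least two, and since all insertions are pulled back from that space the contribution vanishes. Your sketch gestures at ``a virtual dimension count'' but does not identify this step, which is the actual reason only a single vertex can lie over $\diamond$.
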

\noindent In \cite{Vre} these cases are referred to, respectively, as types $A,B$ and $C^+$.

\begin{proof} Let $\sigma \in \Sigma_{1,\alpha}(\PP^m|H,d)$ be the cone corresponding to $\Ecal \subseteq \ol\Mcal_{1,\alpha}(\PP^m|H,d)$. By the discussion above $\Dcal$ corresponds to a ray in the subdivision of $\sigma$ obtained by imposing the central alignment condition. Since the elliptic core is assigned positive degree, both radii are equal to $0$ and the subdivision is trivial. We conclude that $\sigma=\RR_{\geq 0}$. Since there must be at least one vertex mapped to higher level, the tropical target $\widetilde\RR_{\geq 0}$ is obtained from $\RR_{\geq 0}$ by subdividing at a single point $\diamond \in \RR_{> 0}$.

In order for the cone $\sigma$ to be $1$-dimensional, the dual graph $\Gamma$ must be bipartite, with vertices over $0$ and $\diamond$. The cases I, II and III enumerated above cover situations where there is a single vertex mapped to $\diamond$. If there is more than one such vertex, then the contribution to the Gromov--Witten invariant vanishes. To see this, we consider the stratum
\begin{equation*} \Ecal^{\text{\tiny{log}}} \subseteq \ol\Mcal^{\text{\tiny{log}}}_{1,\alpha}(\PP^m|H,d) \end{equation*}
to which $\Ecal$ maps under the collapsing morphism, and examine the composition $\Dcal \to \Ecal \to \Ecal^{\text{\tiny{log}}}$. If we let $\Fcal^{\text{\tiny{log}}}$ denote the intersection of $\Ecal^{\text{\tiny{log}}}$ with the main component of the moduli space, then we obtain a factorisation $\Dcal \to \Fcal^{\text{\tiny{log}}} \hookrightarrow \Ecal^{\text{\tiny{log}}}$. Since the moduli space is generically unobstructed along $\Fcal^{\text{\tiny{log}}}$, the codimension of $\Fcal^{\text{\tiny{log}}}$ is given by the dimension of the associated cone in the tropicalization $\Sigma^{\text{\tiny{log}}}$. If there is more than one vertex mapped to higher level, then this cone has dimension $\geq 2$. Therefore the map $\Dcal \to \Fcal^{\text{\tiny{log}}}$ has positive-dimensional fibres, and since all insertions are pulled back from the latter space the contribution vanishes by the projection formula.\end{proof}

\begin{remark} The difference in dimensions between $\Dcal$ and $\Fcal^{\text{\tiny{log}}}$ (or equivalently, between virtual dimensions of $\Ecal$ and $\Ecal^{\text{\tiny{log}}}$) may be interpreted as the difference in dimensions between moduli spaces of \emph{disconnected} rubber and their images under the collapsing morphisms. \end{remark}

We investigate the three types I, II and III separately, giving a recursive description of the boundary divisor in each case. For the remainder of this subsection, therefore, we fix a one-dimensional cone $\tau \in \Sigma_{1,\alpha}(\PP^m|H,d)$, let $\Dcal \subseteq \VZ_{1,\alpha}(\PP^m|H,d)$ be the associated logarithmic divisor, and assume that $\Dcal$ is contained in $\Dcal_{1,\alpha,k}(\PP^m|H,d)$ and is of type I, II or III. We let $\Ecal \subseteq \ol\Mcal_{1,\alpha}(\PP^m|H,d)$ be the logarithmic stratum into which $\Dcal$ is mapped; this is indexed by a cone $\epsilon \in \Sigma^{\Kim}$ corresponding to a combinatorial type $\Delta$, and $\epsilon$ is one-dimensional since we are restricting to the type I, II, III cases.

\subsubsection{Type I}\label{subsubsection type A} Suppose $\Delta$ is of type I. Then $\Ecal$ admits a finite and surjective \textbf{splitting morphism} onto the fibre product:
\begin{equation*} \Ecal \to \left( \ol\Mcal_{1,\alpha^{(1)}\cup(m_1)}(\PP^m|H,d_1) \times \prod_{i=2}^r \ol\Mcal_{0,\alpha^{(i)}\cup(m_i)}(\PP^m|H,d_i) \right) \times_{H^r} \ol\Mcal^{\leftrightarrow}_{0,\alpha^{(0)}\cup (-m_1,\ldots,-m_r)}(\mathbb{P}|H_0+H_\infty,d_0).\end{equation*}

\begin{lemma} \label{Lemma type A gluing} The divisor $\Dcal$ admits a natural splitting morphism
\begin{equation*}\Dcal \xrightarrow{\rho} \left(\VZ_{1,\alpha^{(1)}\cup(m_1)}(\PP^m|H,d_1)\times\prod_{i=2}^r \ol\Mcal_{0,\alpha^{(i)}\cup(m_i)}(\PP^m|H,d_i)\right) \times_{H^r} \ol\Mcal^{\leftrightarrow}_{0,\alpha^{(0)}\cup (-m_1,\ldots,-m_r)}(\mathbb{P}|H_0+H_\infty,d_0)\end{equation*}
such that the map $\Dcal \to \Ecal$ lifts the map on fibre products obtained by desingularising the main component of the first factor.\end{lemma}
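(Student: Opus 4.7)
The plan is to combine the given logarithmic splitting morphism on $\Ecal$ with the observation that, in the type I setting, the central alignment and factorisation data on $\Dcal$ restrict cleanly across the splitting to enhance only the genus one factor.

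First, I would record the standard Abramovich--Chen--Gross--Siebert splitting morphism
\[
\Ecal \to \left(\ol\Mcal_{1,\alpha^{(1)}\cup(m_1)}(\PP^m|H,d_1) \times \prod_{i=2}^r \ol\Mcal_{0,\alpha^{(i)}\cup(m_i)}(\PP^m|H,d_i)\right) \times_{H^r} \ol\Mcal^{\leftrightarrow}_{0,\alpha^{(0)}\cup (-m_1,\ldots,-m_r)}(\mathbb{P}|H_0+H_\infty,d_0)
\]
and compose with $\Dcal \to \Ecal$ to obtain a morphism from $\Dcal$ into the same logarithmic target. The lemma reduces to showing that this composition factors through the subspace where the first factor is replaced by its reduced enhancement $\VZ_{1,\alpha^{(1)}\cup(m_1)}(\PP^m|H,d_1)$, and that the resulting lift is compatible with $\Dcal \to \Ecal$.

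Second, given an $S$-family $[C \to \PP^m[s] \to \PP^m]$ in $\Dcal$, I would produce the genus one subfamily $\mathcal C_1$ by normalising along the nodes prescribed by $\Delta$ and equipping it with its induced logarithmic map of degree $d_1$ and contact order $\alpha^{(1)}\cup(m_1)$. Because $\Delta$ assigns positive degree to the genus one vertex $\sqC_1$, the minimal genus one subgraph of the total tropical curve is already contained in the tropicalisation of $\mathcal C_1$. The central alignment datum $\delta \in \ol M_S$ on the total family therefore restricts to a central alignment on $\mathcal C_1$ satisfying Definition \ref{def: alignment}, and the complete factorisation of the total map restricts on $\mathcal C_1$ to a complete factorisation in the sense of \S \ref{subsection factorisation}. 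This places the first factor of the splitting inside $\VZ_{1,\alpha^{(1)}\cup(m_1)}(\PP^m|H,d_1)$, producing the morphism $\rho$. The rational factors require no enhancement, and the rubber factor is unaffected; compatibility with the splitting of $\Ecal$ is then immediate by construction.

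The main obstacle is verifying that the alignment and factorisation conditions genuinely restrict across the dual graph. This amounts to a locality statement for subdivisions of tropical moduli cones: the cone $\tau \in \Sigma_{1,\alpha}(\PP^m|H,d)$ corresponding to $\Dcal$ is obtained from the cone $\epsilon \in \Sigma^{\Kim}$ parametrising $\Ecal$ by a subdivision which, for the type I combinatorial type, is entirely supported on the subcone coming from $\sqC_1$ (the genus zero subcones carry no alignment data). It then follows that the desingularisation on the first factor of the splitting target pulls back along the ACGS splitting morphism to exactly the subdivision producing $\Dcal$, which simultaneously yields both the factorisation through $\rho$ and the compatibility square in the lemma statement.
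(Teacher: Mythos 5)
Your plan is in close agreement with the paper's argument: split along the nodes $q_1,\ldots,q_r$, observe that only the genus one piece $\sqC_1$ needs to be promoted to a $\VZ$ space, and argue that the alignment and factorisation data on the total curve restrict to $\sqC_1$ alone. Three small points are worth flagging.

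First, a minor attribution slip: you write that the core lies in the tropicalisation of $\mathcal C_1$ \emph{because} $\Delta$ assigns positive degree to $\sqC_1$. In fact the core is contained in $\sqC_1$ simply because $\sqC_1$ is the unique genus one vertex; the positive degree assumption (from Lemma~\ref{lem:combs}) is what forces the radii to vanish at the generic point, which is a different role.

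Second, your key locality claim — that the alignment subdivision producing $\Dcal$ from $\Ecal$ is supported entirely on the $\sqC_1$ subcone — is correct in spirit, but the paper pins it down more concretely: for both contraction radii $\delta_F, \delta_B$, one shows that $\lambda(\sqC') > \delta$ for every component $\sqC'$ of $\sqC_0$ (and hence also for $\sqC_2,\ldots,\sqC_r$). That is, all pieces other than $\sqC_1$ lie strictly outside both radii, so the contracted curves $\overline C_F$ and $\overline C_B$ are formed entirely inside $\sqC_1$, and the factorisation condition for the total curve is \emph{equivalent} to the factorisation condition for $\sqC_1$. Your formulation gives only the restriction direction (factorisation of the total curve implies factorisation on $\sqC_1$), which suffices to define $\rho$, but the equivalence is what makes $\Dcal \to \Ecal$ genuinely a desingularisation of the first factor, as the lemma asserts. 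You should make the radius bound explicit rather than inferring it from a general locality principle.

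Third, the fibre product description underlying the splitting morphism on $\Ecal$ holds on the nose only in the category of logarithmic (not fine and saturated) schemes; the paper records that one works in that category first and saturates afterwards, citing Abramovich--Marcus--Wise. This is a standard but nontrivial technical step that your plan passes over silently.

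With those three points tightened, your proof coincides with the paper's.
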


\begin{proof}
We consider the statement of the lemma in logarithmic schemes, so that a relative stable map to an expansion admits a unique logarithmic lifting, and later saturate the structures. The fibre product description preceding the lemma is an isomorphism in this category~\cite[Lemma~4.2.2]{AbramovichMarcusWiseComparison}.

The map $\Dcal\to\Ecal$ is an isomorphism away from the exceptional centres, and the latter is contained in the locus where the core is contracted. Given an element of $\Dcal$ we can split it along the nodes $q_1,\ldots,q_r$. It is then clear that $\sqC_1$ is aligned. We claim that $\sqC$ satisfies the factorisation property if and only if $\sqC_1$ does. This implies the lemma.

On $\sqC$ there are two contraction radii $\delta_F,\delta_B$. Let $\delta \in \{\delta_F,\delta_B\}$. An examination of the stratification by combinatorial type of $\Dcal$ shows that $\lambda(\sqC^\prime) > \delta$ for any component $\sqC^\prime$ of $\sqC_0$. This means that $\sqC_0,\sqC_2,\sqC_3,\ldots,\sqC_r$ lie outside both contraction radii, and so the aligned curve $\sqC$ satisfies the factorisation condition if and only if $\sqC_1$ does. \end{proof}

%
\noindent Lemma \ref{Lemma type A gluing} provides a means to calculate integrals over the class $\lambda_\Dcal[\Dcal]$ appearing on the right-hand side of Theorem \ref{theorem recursion}, provided that we can calculate $\lambda_\Dcal$ and the degree of the splitting morphism. Simple closed formulae for these are given in \S \ref{subsubsection splitting degree} below.

\subsubsection{Type II}
Now suppose $\Delta$ is of type II. In this case, it is impossible for the core to be contracted. Hence $\Ecal$ is disjoint from the blown-up locus, and the map $\Dcal \to \Ecal$ is an isomorphism. We obtain the following description, entirely in terms of genus zero data:
\begin{lemma} $\Dcal$ admits a finite and surjective splitting morphism:
\begin{equation*} \Dcal \xrightarrow{\rho} \left(\ol\Mcal_{0,\alpha^{(1)}\cup(m_1,m_2)}(\PP^m|H,d_1)\times\prod_{i=3}^r \ol\Mcal_{0,\alpha^{(i)}\cup(m_i)}(\PP^m|H,d_i)\right) \times_{H^r} \ol\Mcal^{\leftrightarrow}_{0,\alpha^{(0)}\cup(-m_1,\ldots,-m_r)}(\mathbb P|H_0+H_\infty,d_0).\end{equation*}\end{lemma}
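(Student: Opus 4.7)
The plan is to exploit the fact that type II is the ``easy'' case among the three: no blowup occurs and the splitting is essentially a genus-zero phenomenon.

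First I would identify the core. In a type II combinatorial type the minimal genus-one subcurve is the cycle formed by $\sqC_0$ and $\sqC_1$ joined along the pair of edges. By Lemma~\ref{lem:combs} both of these vertices carry positive degree, so the core is not contracted, neither by the map $f\colon C \to \PP^m[s]$ to the expansion nor by its collapse $f_B\colon C \to \PP^m$. The remark following the definition of \emph{factors completely} in \S\ref{subsection factorisation} then guarantees that the factorisation conditions are automatically satisfied at every point of the corresponding logarithmic stratum $\Ecal \subseteq \ol\Mcal_{1,\alpha}(\PP^m|H,d)$.

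Next I would argue that $\Dcal \to \Ecal$ is an isomorphism. Because the core has positive degree on both its components, both alignment radii $\delta_F$ and $\delta_B$ vanish on the cone $\epsilon$ corresponding to $\Ecal$, so Step~3 of the construction of $\Sigma_{1,\alpha}(\PP^m|H,d)$ in \S\ref{subsection indexing strata} performs no subdivision of $\epsilon$. Consequently, the logarithmic modification $\widetilde{\VZ}_{1,\alpha}(\PP^m|H,d) \to \ol\Mcal_{1,\alpha}(\PP^m|H,d)$ is an isomorphism over $\Ecal$, and combined with the automatic verification of the factorisation condition this yields $\Dcal \cong \Ecal$ scheme-theoretically.

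Finally I would invoke the standard splitting morphism for strata of Kim's (or equivalently the ACGS) space of logarithmic stable maps, in the form of \cite[Lemma~4.2.2]{AbramovichMarcusWiseComparison}. Cutting $C$ at the $r$ nodes of the combinatorial type severs the cycle into two punctures on $\sqC_1$, carrying the matched contact orders $m_1$ and $m_2$, and severs each remaining edge into a single puncture on one of $\sqC_3,\ldots,\sqC_r$ and a corresponding puncture on $\sqC_0$. Matching evaluations at the $r$ pairs of newly created punctures gives the fibre product over $H^r$, while the rubber quotient on the $\sqC_0$ factor accounts for the $\Gm$-action scaling the higher-level component. Finiteness and surjectivity of $\rho$ are standard consequences of the gluing theorem for minimal logarithmic stable maps. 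The one bit of bookkeeping worth flagging as a potential trap is the double edge: both cuts occur on the single genus-zero component $\sqC_1$, so the pair $(m_1,m_2)$ lies on the same moduli factor rather than on two different ones; beyond this, no ideas from elliptic singularities enter, and the argument is substantially shorter than in type I.
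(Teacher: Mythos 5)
Your proposal is correct and follows the paper's argument: in type II the minimal genus-one subgraph is the cycle through the double edge, so the core cannot be contracted; consequently the alignment is trivial, the modification $\widetilde{\VZ}_{1,\alpha}(\PP^m|H,d) \to \ol\Mcal_{1,\alpha}(\PP^m|H,d)$ is an isomorphism over $\Ecal$, the factorisation conditions hold vacuously, and the standard gluing result \cite[Lemma~4.2.2]{AbramovichMarcusWiseComparison} gives the splitting morphism. The paper is terser, saying only that the core cannot be contracted and hence $\Ecal$ is disjoint from the blown-up locus; you fill in the tropical and deformation-theoretic detail, which is harmless and arguably clarifying.

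One small inaccuracy worth fixing: you write ``by Lemma~\ref{lem:combs} both of these vertices carry positive degree,'' but the constraints in Lemma~\ref{lem:combs} guarantee positive degree only for $\sqC_1,\ldots,\sqC_r$ and for the core taken as a whole; there is no constraint forcing the higher-level vertex $\sqC_0$ to have positive degree. This does not affect the argument, since the cycle (i.e.~the core) is not contracted as soon as a single vertex on it has positive degree, which is guaranteed for $\sqC_1$. Your identification of the bookkeeping trap -- both new markings $(m_1,m_2)$ landing on the single factor indexed by $\sqC_1$, with the fibre product still over $H^r$ -- is exactly the right point to flag and matches the statement.
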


\subsubsection{Type III} \label{subsubsection type C+} Finally suppose that $\Delta$ is of type III. As before we have a finite and surjective morphism:
\begin{equation*} \Ecal \to  \left( \prod_{i=1}^r \ol\Mcal_{0,\alpha^{(i)}\cup(m_i)}(\PP^m|H,d_i) \right) \times_{H^r} \ol\Mcal^{\leftrightarrow}_{1,\alpha^{(0)}\cup(-m_1,\ldots,-m_r)}(\mathbb{P}|H_0+H_\infty,d_0). \end{equation*}
The same arguments as in \S \ref{subsubsection type A} then apply to give:
\begin{lemma} $\Dcal$ admits a finite splitting morphism
\begin{equation*}\Dcal \xrightarrow{\rho} \left( \prod_{i=1}^r \ol\Mcal_{0,\alpha^{(i)}\cup(m_i)}(\PP^m|H,d_i) \right) \times_{H^r} \VZ^{\leftrightarrow}_{1,\alpha^{(0)}\cup(-m_1,\ldots,-m_r)}(\mathbb{P}|H_0+H_\infty,d_0)\end{equation*}
such that $\Dcal \to \Ecal$ corresponds to the obvious map on fibre products.\end{lemma}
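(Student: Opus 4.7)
The plan is to follow the blueprint of the Type I proof in \S\ref{subsubsection type A} with the roles of base and higher level exchanged. A finite and surjective splitting map onto the corresponding fibre product of $\overline{\mathcal M}$-spaces already appears in the display preceding the lemma, so the task is to lift it across the desingularisation
\begin{equation*}
\VZ^{\leftrightarrow}_{1,\alpha^{(0)}\cup(-m_1,\ldots,-m_r)}(\mathbb{P}|H_0+H_\infty,d_0)\to \overline{\mathcal M}^{\leftrightarrow}_{1,\alpha^{(0)}\cup(-m_1,\ldots,-m_r)}(\mathbb{P}|H_0+H_\infty,d_0),
\end{equation*}
which in Type III absorbs the entire exceptional locus of $\VZ_{1,\alpha}(\PP^m|H,d)\to\ol\Mcal_{1,\alpha}(\PP^m|H,d)$ because the core itself sits on higher level.

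Working in the category of fine (not necessarily saturated) logarithmic schemes, in which a relative stable map admits a unique logarithmic lifting, the fibre product description of $\Ecal$ is an isomorphism at the $\overline{\mathcal M}$-level by~\cite[Lemma~4.2.2]{AbramovichMarcusWiseComparison}. Given a point of $\Dcal$, cutting along the nodes $q_1,\ldots,q_r$ joining $\sqC_0$ to $\sqC_1,\ldots,\sqC_r$ produces an ordinary genus zero relative stable map $\sqC_i\to(\PP^m,H)$ for each $i\geq 1$ (carrying no alignment), together with a centrally aligned genus one map from $\sqC_0$ to the rubber bundle $\mathbb{P}$ with contact data $\alpha^{(0)}\cup(-m_1,\ldots,-m_r)$, well-defined up to $\Gm$-rescaling on the fibres. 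This tautologically yields a point of $\overline{\mathcal M}^{\leftrightarrow}_{1,\alpha^{(0)}\cup(-m_1,\ldots,-m_r)}(\mathbb{P}|H_0+H_\infty,d_0)$; the content of the lemma is to show that it lifts to $\VZ^{\leftrightarrow}$, which amounts to checking that the two factorisation conditions on $\sqC$ are equivalent to the two factorisation conditions on the core piece $\sqC_0$.

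As in Type I, the verification proceeds against the two contraction radii $\delta_F, \delta_B$, the claim being that neither disc reaches any of the rational tails $\sqC_1,\ldots,\sqC_r$. An inspection of the combinatorial type $\Delta$ indexing $\Dcal$, together with the one-dimensionality of the associated cone $\epsilon\in\Sigma^{\operatorname{Kim}}$ established in the proof of Lemma~\ref{lem:combs}, shows that $\lambda(\sqC')>\delta$ for $\delta\in\{\delta_F,\delta_B\}$ and every component $\sqC'$ of each $\sqC_i$ with $i\geq 1$; so the rational tails sit outside both contraction discs and contribute no factorisation obstruction. The factorisation on $\sqC$ is therefore equivalent to the factorisation on $\sqC_0$, compatibility of $\rho$ with $\Dcal\to\Ecal$ is immediate from the construction, and finiteness of $\rho$ follows from finiteness of the splitting morphism on $\Ecal$ together with finiteness of the desingularisation $\VZ^{\leftrightarrow}\to\overline{\mathcal M}^{\leftrightarrow}$. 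The principal obstacle I foresee is the tropical inequality $\lambda(\sqC')>\delta_B$ for the base-level rational tails: unlike in Type I, where the tails sit on the same side of the expansion as the core, here the core is on higher level while the tails lie at the base, so the $\delta_B$-contraction disc has to be analysed from the opposite side of the expansion; once this inequality is in hand the saturation step is routine.
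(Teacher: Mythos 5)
Your proposal tracks the paper's own (very terse) proof: the paper explicitly says ``the same arguments as in \S\ref{subsubsection type A} then apply,'' and you carry out exactly that adaptation -- split along the nodes $q_1,\ldots,q_r$, observe that $\sqC_0$ inherits the alignment, and argue that the two factorisation conditions on $\sqC$ localise to $\sqC_0$ because the rational tails lie strictly outside both contraction radii. The appeal to fine-but-not-saturated logarithmic schemes, \cite[Lemma~4.2.2]{AbramovichMarcusWiseComparison}, and finiteness of the desingularisation are all correct and match the Type~I template.

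The ``obstacle'' you flag at the end is somewhat overstated, however. You worry that the tails and the core now sit on opposite levels of the expansion so that $\delta_B$ must be ``analysed from the opposite side'' -- but this situation is already present in Type~I. There the key claim $\lambda(\sqC')>\delta$ is verified for components of $\sqC_0$, which in Type~I is precisely the piece mapping to higher level while the core $\sqC_1$ sits at base level. So in both types the inequality is about a piece on the \emph{other} side of the expansion from the core, and the mechanism is the same: the edge $q_i$ joins vertices at distinct target levels, so it carries strictly positive tropical length on every cone of $\Sigma_{1,\alpha}(\PP^m|H,d)$ containing $\tau$, while the alignment radius never exceeds $\lambda(w_0)$ for the $\sqC_0$-endpoint $w_0$ of $q_i$ because the circle is anchored to a positive-degree vertex of the degenerating genus-one piece. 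What \emph{is} genuinely different, and worth stating, is bookkeeping: in Type~I the single vertex $\sqC_0$ is a ``gatekeeper'' so that once $\lambda(\sqC_0)>\delta$ the tails $\sqC_2,\ldots,\sqC_r$ automatically follow, whereas in Type~III there is no such gatekeeper and the inequality must be checked at each $q_i$ separately. This is a routine modification, not a new idea, and the rest of your argument (and of the paper's) goes through.
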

\noindent The final factor in the fibre product is the logarithmic blowup of the moduli space of rubber maps constructed in \S \ref{subsection rubber}. We will compute integrals over such spaces as part of the recursion (see \S \ref{section recursion algorithm}).

\subsubsection{Splitting degree and vanishing order} \label{subsubsection splitting degree} In each of the subsections \ref{subsubsection type A}--\ref{subsubsection type C+} above, we obtained a finite splitting morphism $\rho$ from $\Dcal$ to a fibre product of moduli spaces with smaller numerical data. Here we describe the degree of $\rho$ and calculate the vanishing order $\lambda_\Dcal$ of the section $s_k$ constructed in \S \ref{Section Gathmann line bundle}. This completely describes the terms appearing on the right-hand side of Theorem \ref{theorem recursion} which are of type I, II or III.

\begin{lemma}\label{lem:saturation} The degree of $\rho$ is given by:
\begin{equation*} \label{degree of gluing} \dfrac{\prod_{i=1}^r m_i}{\lcm(m_1,\ldots,m_r)}. \end{equation*}\end{lemma}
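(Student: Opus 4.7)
My plan is to compute $\deg \rho$ by combining an isomorphism in the fine (non-saturated) logarithmic category with a Smith normal form calculation that measures the resulting fs saturation.

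First, I would extend the argument of Lemma~\ref{Lemma type A gluing} to establish that $\rho$ is an isomorphism in the category of fine logarithmic stacks, for all three types I, II and III. The strategy---splitting at the gluing nodes, observing that the alignment and factorization conditions on the assembled curve are equivalent to their analogues on the individual pieces---should transfer verbatim: in each case the expanded level contains only the vertex $\sqC_0$, so the two contraction radii behave predictably under the splitting. Consequently the degree of $\rho$ will coincide with the degree of the fs saturation morphism from the fine fibre product to its saturation.

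Next, I would translate this saturation degree into a group-theoretic invariant. At a generic point of the image, each factor of the fibre product has trivial log structure (smooth source, unexpanded target, transverse contact), so the entire fine log structure of the fibre product is generated by the gluing data at the $r$ nodes. This gives a fine characteristic monoid $P$ with $r$ generators $e_1,\ldots,e_r$ (the smoothing parameters of the new nodes) subject to the $r-1$ independent relations $m_i e_i = m_{i+1} e_{i+1}$, expressing that each tropical edge, when scaled by its slope, realizes the common target expansion factor $\ell$. Passing to the fs saturation kills the torsion of $P^{\mathrm{gp}}$, and the saturation degree is $|T(P^{\mathrm{gp}})|$.

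Finally, a Smith normal form computation will show that $P^{\mathrm{gp}} = \mathbb{Z}^r / \langle m_i e_i - m_{i+1} e_{i+1} \rangle$ has torsion subgroup of order $\prod_i m_i / \lcm(m_1,\ldots,m_r)$; the special cases $r=1$ (trivial torsion) and $r=2$ (torsion of order $\gcd(m_1,m_2) = m_1 m_2/\lcm(m_1,m_2)$) serve as useful sanity checks. The hardest part will be the type-by-type verification in the first step, especially for type~III, where the rubber factor carries arithmetic genus one and the factorization condition must be tracked through a potential elliptic singularity on that side; once this is in place, the saturation computation is a routine finish.
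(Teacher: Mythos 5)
The paper does not prove this lemma---it refers to \cite[Section~7.9]{ChenDegeneration} and \cite[Section~5.3]{ACGSDecomposition}, and your proposal essentially reproduces the argument used in those sources. The Smith normal form arithmetic is correct: the relation matrix $(m_ie_i - m_{i+1}e_{i+1})_{1\le i\le r-1}$ has $(r-1)\times(r-1)$ minors equal to $\pm\prod_{j\ne i}m_j$, and a prime-by-prime valuation count gives $\gcd_i\bigl(\prod_{j\ne i}m_j\bigr) = \prod_j m_j/\lcm(m_1,\ldots,m_r)$, which is therefore the order of the torsion subgroup of the cokernel. Including the target length $\ell$ as an extra generator (subject to $m_ie_i=\ell$) gives the same torsion, so your implicit elimination of $\ell$ is harmless.

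The one step that deserves more justification is the identification $\deg\rho = |T(P^{\gp})|$. If $T(P^{\gp})$ is nontrivial then $\Spec\kfield[P]$ is reducible with $|T(P^{\gp})|$ branches, and the monoid-scheme saturation $\Spec\kfield[P^{\mathrm{sat}}]\to\Spec\kfield[P]$ is a normalization, hence generically of degree one on each branch rather than of degree $|T(P^{\gp})|$. What actually computes $\deg\rho$ is the number of fs liftings of a strict geometric point of the fine fibre product, equivalently the length of the closed fibre of the saturation morphism; the equality of this quantity with $|T(P^{\gp})|$ uses that the moduli stratum $\Ecal$ is irreducible while the underlying scheme of the fine fibre product has $|T(P^{\gp})|$ generic points, so the branches are all identified after pullback to the irreducible moduli stack and reappear as multiplicity. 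This bookkeeping is precisely what \cite[Section~5.3]{ACGSDecomposition} formalizes, so you should either reproduce that argument or cite it at that point rather than assert the identity. One more small remark: in type~III the rubber factor carries nontrivial log structure at a generic point (the target length parameter), so ``trivial log structure on each factor'' is not quite right as stated; but since that parameter is exactly the $\ell$ you identified with $m_ie_i$, it contributes no new generators to $P$ and the computation stands.
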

\begin{proof} This calculation is well known, see for instance~\cite[Section~7.9]{ChenDegeneration}, \cite[Section~5.3]{ACGSDecomposition}.
\end{proof}

\begin{lemma}\label{lemma vanishing order} The section $s_k$ vanishes along the divisor $\Dcal$ with order given by 
\[
\lambda_\Dcal = \lcm(m_1,\ldots,m_r).
\] 
\end{lemma}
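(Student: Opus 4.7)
The plan is to identify the vanishing order with a combinatorial quantity computable directly from the primitive generator of the ray $\tau \in \Sigma_{1,\alpha}(\PP^m|H,d)$ corresponding to $\Dcal$, together with the piecewise-linear function defining $s_k$. Recall from \S\ref{Section Gathmann line bundle} that $s_k$ is the section attached to the composite PL function
\[
\Sigma_{1,\alpha}(\PP^m|H,d) \to \Sigma^{\log} \to \RR_{\geq 0},
\]
which sends a tropical map to the image under $f$ of the vertex of $\sqC$ supporting $x_k$. The vanishing order $\lambda_\Dcal$ is then the value of this PL function on the primitive integral generator of $\tau$.

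First I would observe that for strata of types I, II, III the core is assigned positive degree, so as noted in the proof of Lemma~\ref{lem:combs} the central alignment subdivision is trivial: $\tau$ coincides with the corresponding ray $\epsilon$ in $\Sigma^{\Kim}$. This lets me work directly in Kim's tropical moduli space, where $\epsilon$ is parametrised by the position $\diamond \in \RR_{>0}$ of the unique vertex at higher level.

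Next I would use the slope data of the combinatorial type $\Delta$ to read off the lengths of the edges connecting $\sqC_0$ to each $\sqC_i$ ($i=1,\ldots,r$). The edge from $\sqC_0$ (mapped to $0 \in \widetilde\RR_{\geq 0}$) to $\sqC_i$ (mapped to $\diamond$) carries slope $m_i$, so continuity of the tropical map forces the edge length to satisfy $m_i \ell_i = \diamond$. Thus on $\epsilon$, every edge length is determined by $\diamond$ via $\ell_i = \diamond/m_i$. The primitive integral generator of $\epsilon$ in the lattice of the cone is therefore the smallest positive value of $\diamond$ for which all $\ell_i$ and $\diamond$ are simultaneously integers; this is $\diamond = \lcm(m_1,\ldots,m_r)$.

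Finally I evaluate the PL function $f \circ x_k$ at this primitive generator. Since $x_k$ lies on a component mapped to higher level (we are inside $\Dcal_{1,\alpha,k}$) and the only higher-level vertex in $\widetilde\RR_{\geq 0}$ is $\diamond$, the value is exactly $\diamond = \lcm(m_1,\ldots,m_r)$, which is $\lambda_\Dcal$. The only subtle point, which I expect to be the main obstacle, is confirming that the generator of $\tau$ really does coincide with the generator of $\epsilon$ in the full saturation of the integral structure after all subdivisions; this is precisely the saturation issue addressed in Lemma~\ref{lem:saturation} and in~\cite[Section~7.9]{ChenDegeneration}, and no additional saturation arises from the central alignment step here because that subdivision is trivial on $\tau$.
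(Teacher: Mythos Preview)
Your proposal is correct and follows essentially the same approach as the paper: both identify $\lambda_\Dcal$ as the value of the piecewise-linear function $f\circ x_k$ on the primitive integral generator of the ray $\tau$, compute that generator via the continuity constraint $m_i\ell_i=\diamond$ to obtain edge lengths $\ell_i=\lcm(m_1,\ldots,m_r)/m_i$, and then observe that the image in $\RR_{\geq 0}$ is $\diamond=\lcm(m_1,\ldots,m_r)$. The paper phrases this as the index of the linear map $\tau\to\RR_{\geq 0}$ and embeds $\tau$ directly in $\RR_{\geq 0}^r$ via the edge-length coordinates, whereas you parametrise by $\diamond$; your additional remarks on the triviality of the alignment subdivision for types I--III are a helpful sanity check that the paper leaves implicit.
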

\begin{proof} Let $\tau \in \Sigma_{1,\alpha}(\PP^m|H,d)$ be the cone corresponding to $\Dcal$ and let
\begin{equation*} \varphi \colon \Sigma_{1,\alpha}(\PP^m|H,d) \to \Sigma^{\log} \to \RR_{\geq 0} \end{equation*}
be the piecewise-linear function constructed in \S \ref{Section Gathmann line bundle}. It follows from the tropical description that $\lambda_\Dcal$ is equal to the index of the map of integral cones $\tau \to \RR_{\geq 0}$ obtained by restricting $\varphi$. Observe that $\tau \subseteq \RR_{\geq 0}^r$ with integral generator:
\begin{equation*} w = \left( \dfrac{\lcm(m_1,\ldots,m_r)}{m_1},\ldots,\dfrac{\lcm(m_1,\ldots,m_r)}{m_r} \right).\end{equation*}
The map $\tau \to \RR_{\geq 0}$ is given by projecting onto the $i$th factor and then multiplying by $m_i$. Note that by piecewise linear continuity of the tropical map, this is independent of $i$. We conclude that the index is equal to $\lcm(m_1,\ldots,m_r)$ as claimed.\end{proof}

In summary, the contribution of each term $\lambda_\Dcal [\Dcal]$ is given by integrating over the appropriate fibre product and multiplying the result by $\lambda_\Dcal \cdot \deg\rho = \prod_{i=1}^r m_i$.

\subsection{Recursive description of the divisors: type $\dag$}\label{subsection C0 splitting} The treatment of the type I, II and III strata are direct extensions of ideas of  Gathmann--Vakil. The type $\dag$ case is entirely new. We provide a recursive description of such boundary divisors; this forms the technical heart of the paper.

\subsubsection{Possible combinatorial types}\label{S:combinatorialdescription} A boundary divisor $\Dcal \subseteq \Dcal_{1,\alpha,k}(\PP^m|H,d)$ is said to have \textbf{type $\dag$} if the core is assigned degree zero. Divisors of this type occur as principal components in compactified torus bundles over strata in the space of relative maps. As a result, their combinatorial types exhibit degenerate behaviour. Precisely, over the generic point of such a divisor, the target may expand multiple times, and the source tropical curve need not be bipartite with a single interior vertex. An instance of this can be seen the left hand side of Figure \ref{fig:off_we_go}: at first sight, the multiple expansions seen in the target appear to depict a high codimension stratum, but due to the alignment condition, which is required to pick out the main component, these lengths cannot be varied independently. The degenerate contributions are at the heart of the reduced theory and cannot be removed.

The next two lemmas describe the combinatorial types associated to rays whose divisors $\Dcal$ have type $\dag$ and contribute nontrivially to the Gromov--Witten invariants.
\begin{lemma} \label{lemma type C0 combinatorial types}
 For every nonzero vertex $\diamond$ of the tropical target, the fibre of the tropical map over $\diamond$ contains exactly one stable vertex, which either is the core, or lies on the circle.
\end{lemma}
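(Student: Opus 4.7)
The plan is to adapt the projection-formula argument from the proof of Lemma~\ref{lem:combs}. Let $\tau \in \Sigma_{1,\alpha}(\PP^m|H,d)$ be the ray corresponding to $\Dcal$, and let $\sigma^{\log} \in \Sigma^{\log}$ be its image obtained by forgetting both the Kim image-ordering subdivision and the central alignment subdivision. The stratum $\Fcal^{\log} \subseteq \ol\Mcal^{\log}_{1,\alpha}(\PP^m|H,d)$ corresponding to $\sigma^{\log}$ (intersected with the main component) is generically unobstructed, so it has codimension $\dim \sigma^{\log}$, and the induced morphism $\Dcal \to \Fcal^{\log}$ has fibres of dimension $\dim \sigma^{\log} - 1$. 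Since all insertions are pulled back from $\Fcal^{\log}$, the projection formula forces $\Dcal$ to contribute trivially unless $\dim \sigma^{\log} = 1$; thus the hypothesis of nontrivial contribution reduces the lemma to a statement about rays with $\dim \sigma^{\log} = 1$.

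Next, I would make the parameter count in $\sigma^{\log}$ explicit. Its parameters are the image positions $f(v) \in \RR_{\geq 0}$ of the pre-Kim source vertices $v$ together with the lengths of horizontal (slope-zero) edges; edges of nonzero slope have their lengths determined by the slope-length equation $\mu_e \ell_e = f(w) - f(v)$. Crucially, image-ordering is imposed only after passing to $\Sigma^{\Kim}$, so distinct pre-Kim vertices that happen to share a level in $\sqC$ contribute \emph{independent} $f$-values to $\sigma^{\log}$, not identified ones.

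With this setup, I would argue by contradiction. If two pre-Kim vertices $v_1, v_2$ lie over a common nonzero $\diamond$, their $f$-values are independent in $\sigma^{\log}$, whence $\dim \sigma^{\log} \geq 2$. If the unique pre-Kim vertex $v$ over $\diamond$ is strictly inside the circle but is not the core, then $v \in D_F$ by the definition of $\delta_F$ as the distance from the core to the nearest component of $C \setminus D_F$; since $D_F$ is contracted by $f$ to a single point, $v$ and the core both map to a common nonzero level, reducing to the previous case. Finally, if $v$ is strictly outside the circle, the same definition of $\delta_F$ produces another pre-Kim vertex $v'$ with $\lambda(v') = \delta_F$ on the path from the core to $v$, and now the two pre-Kim vertices $v'$ and $v$ map to nonzero levels, yielding $\dim \sigma^{\log} \geq 2$ once more.

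The main obstacle I anticipate is the strictly-outside case, and more generally the need to carefully distinguish pre-Kim vertices of $\sqC$ from the two-valent chain vertices introduced by the Kim and alignment subdivisions, because $\dim \sigma^{\log}$ counts only the former. The argument hinges on $\delta_F$ being readable from the pre-Kim combinatorics, which I would verify by revisiting \S\ref{subsection factorisation} and checking that $D_F$ and its complement are unions of pre-Kim components, with no contribution from chain components produced by the target expansion. A secondary subtlety is the degenerate case where the core itself has cycle structure or sits at a nonzero level; here the parameter count must be redone with $f(\text{core})$ replacing the default $f(\text{core}) = 0$, but the same dichotomy between Kim-time identifications and $\Sigma^{\log}$-freedom continues to drive the argument.
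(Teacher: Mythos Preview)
Your treatment of the ``exactly one stable vertex'' clause is the same as the paper's: reduce to $\dim \sigma^{\log} \geq 2$ and apply the projection formula, exactly the argument inherited from Lemma~\ref{lem:combs}. The paper's proof in fact stops there --- the ``core or on the circle'' clause is not argued separately, and is asserted again without detail in the first line of the proof of Lemma~\ref{lem:stable_vertices} as a consequence of one-dimensionality of the tropical family.

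Your case analysis for the second clause goes beyond what the paper writes, but carries a genuine gap. The circle in the statement is the alignment circle of radius $\delta$, and the compatibility condition on the alignment (the circle passes through a vertex of positive degree in $\PP^m$ with only degree-zero vertices in its strict interior) forces $\delta = \delta_B$, not $\delta_F$. Your ``strictly inside'' case deduces $v \in D_F$, which requires $\lambda(v) < \delta_F$; but a vertex with $\delta_F \leq \lambda(v) < \delta_B$ lies strictly inside the alignment circle while belonging to $D_B \setminus D_F$, so it is contracted by $f_B$ but not by $f$ and need not share the core's level in the expansion. Your reduction to two stable vertices over a common $\diamond$ therefore does not cover this range. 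The ``strictly outside'' case has the difficulty you already anticipate --- the vertex $v'$ on the circle may be a two-valent destabilisation point rather than a pre-Kim stable vertex --- and a further one: there is no reason for the vertex realising $\delta_F$ to lie on the path from the core to $v$, so even producing $v'$ as stated is not clear.
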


\begin{lemma}\label{lem:stable_vertices} All the vertices on the circle are stable i.e. they are not Galois covers of fibers of a component of an expansion.
\end{lemma}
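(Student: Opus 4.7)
The approach is proof by contradiction via a dimension count on the cone $\tau \in \Sigma_{1,\alpha}(\PP^m|H,d)$ associated to $\Dcal$. Since $\Dcal$ is a divisor, $\tau$ is one-dimensional. Suppose for contradiction that some vertex $v$ on the circle is unstable: then $v$ has genus zero, valence two, carries no markings, and its source component maps as a degree-$k$ Galois cover of a $\PP^1$-fibre of some expansion component $\PP_j$. Label the two edges of $\sqC$ at $v$ by $e_1, e_2$, with tropical slopes $\pm k$; these carry $v$ to target edges $[j-1,j]$ and $[j,j+1]$ of $\widetilde{\RR}_{\geq 0}$.

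The plan is to exhibit two independent deformation directions in $\tau$. The first is the global radial scaling parameter $\delta$. For the second, I would examine the local tropical structure at $v$: the continuity relations take the form $k\,\ell(e_i) = L_{\text{target}(e_i)}$ for $i=1,2$, and because $v$ is valence two with no markings or further incidences, the two target edge lengths can be varied independently, subject only to the single linear constraint $\operatorname{dist}(\mathrm{core},v)=\delta$ coming from $v$ lying on the circle. This produces a local parameter transverse to $\delta$, so $\dim\tau\geq 2$, contradicting that $\Dcal$ is a divisor.

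The main obstacle will be in confirming that this local parameter genuinely survives all the tropical constraints cutting out $\Sigma_{1,\alpha}(\PP^m|H,d)$, in particular the well-spacedness condition (Proposition~\ref{prop: well-spaced}) and the central alignment of the remaining circle vertices. Well-spacedness at the contracted core is preserved by a variation localised at $v$, because the slope profile along any path from $\sqE$ to the first flag of nonzero slope is unchanged. For central alignment, Lemma~\ref{lemma type C0 combinatorial types} guarantees a unique stable vertex $w \neq v$ in the fibre $f^{-1}(f(v))$, which anchors level $j$; the local deformation moves $v$ together with $f(v)$ but leaves $w$ and hence the distances from the core to the other circle vertices undisturbed. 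Carrying out each of these compatibility checks rigorously completes the argument.
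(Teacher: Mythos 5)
Your proposal takes a genuinely different route from the paper, and it has a gap. The paper's proof is a \emph{contribution-vanishing} argument (in the same spirit as Lemmas~\ref{lem:combs} and~\ref{lemma type C0 combinatorial types}): a one-dimensional cone whose combinatorial type carries a strictly semistable vertex on the circle \emph{can} exist, but since $f_B$ is constant on the corresponding component, $\mathbb{G}_{\mathrm{m}}$-scaling the attaching-data coordinate attached to that vertex leaves the factorisation condition unaffected, so $\Dcal \to \Ecal$ has positive-dimensional generic fibre and the contribution to the invariant vanishes by the projection formula. Your proposal instead tries to show that such a one-dimensional cone cannot exist at all, which is a stronger claim than needed and, as stated, does not hold.

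The specific gap is the claim that, at an unstable vertex $v$ over an interior target vertex $\diamond_j$, ``the two target edge lengths can be varied independently, subject only to the single constraint $\operatorname{dist}(\mathrm{core},v)=\delta$.'' By Lemma~\ref{lemma type C0 combinatorial types}, each nonzero $\diamond$ also carries exactly one \emph{stable} vertex, and that vertex is either the core or lies on the circle; so the distances from the core to those stable vertices are pinned to $0$ or $\delta$. Through the continuity relations, this determines all the positions of the $\diamond$'s (and hence the target edge lengths) as fixed multiples of the single parameter $\delta$ once the slopes are fixed by the combinatorial type. The presence of $v$ on the circle then does not contribute an extra parameter transverse to $\delta$; it contributes an extra \emph{linear relation} $\lambda(v)=\delta$, which is either automatically implied by the combinatorics (and then $\tau$ is still one-dimensional) or incompatible (and then $\tau$ is empty). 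Concretely, a type where a Galois-cover vertex $v$ and a stable vertex $w$ both lie over the same $\diamond_j$, both at distance $\delta$ from the core, can give a one-dimensional cone; so the intended contradiction with $\dim\tau = 1$ does not materialise. The structural statement you would need is exactly what the paper's vanishing argument supplies instead.
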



\begin{proof}[Proof of Lemma \ref{lemma type C0 combinatorial types}]
Stability of maps to an expansion requires that at least one stable vertex lies over each vertex of the target. If there is more than one stable vertex, then we show as in the proof of Lemma \ref{lem:combs} that the corresponding locus $\Fcal$ in the moduli space of unexpanded maps has high codimension, i.e. the collapsing map has fibres of positive dimension. As both insertions and the factorisation property for the collapsed map to $\mathbb P^m$ are pulled back from the moduli space of maps without expansions, the contribution vanishes by the projection formula.
\end{proof}
\begin{proof}[Proof of Lemma \ref{lem:stable_vertices}]
Since the corresponding family of tropical maps is one-dimensional, for every vertex of the target there must be at least one vertex above it which lies on the circle. If the vertex on the circle is strictly semistable -- so that in particular the collapsed map $f_B$ is constant along the corresponding component -- the factorisation condition for $f_B$ is unaffected by $\Gm$-scaling the coordinate corresponding to that vertex in the moduli space of attaching data \cite[\S 2.2]{SMY2}, so the map $\Dcal\to\Ecal$ has generic fibre of positive dimension and the contribution vanishes.
\end{proof}
\subsubsection{Lines}\label{S:lines}
We begin with a description of the boundary divisor corresponding to a comb whose teeth all have degree $1$. This is a concrete but atypical case, since the teeth are generically transverse (not just dimensionally) to the hyperplane $H$. For simplicity, we assume that the contact order is $(d)$, concentrated at a marking $x_1$ supported on the elliptic curve $E$ contracted to $H$. The combinatorial type is as in Figure \ref{fig:comb111}.

\begin{figure}
 \begin{tikzpicture}
 \tikzset{cross/.style={cross out, draw=black, fill=none, minimum size=2*(#1-\pgflinewidth), inner sep=0pt, outer sep=0pt}, cross/.default={3pt}}
  \foreach \x in {(0,2),(0,1),(0,-1)}
  \draw[fill=black] \x circle (2pt);
  \draw[fill=blue] (0,-2) circle (2pt);
  \foreach \x in {(0,2),(0,1),(0,-1)}
  \draw \x --node[below,blue]{\SMALL$1$} (4,0);
  \draw (1,0) node{$\vdots$};
  \draw[->] (4,0) --node[above]{\color{blue} \SMALL$d$} (6,0);
  \draw[blue,->] (0,-2)--(6,-2);
  \draw[fill=white] (4,0) circle (2pt);
  \draw[fill=blue,blue] (4,-2) node[blue]{$\times$};
  \draw [red] plot [smooth cycle] coordinates {(0,2) (0,1) (0,-1) (2,-1.5) (7,0) (2,2.5)};
 \end{tikzpicture}
 \caption{The combinatorial type of a comb with degree $1$ teeth.}
 \label{fig:comb111}
\end{figure}
Start with the moduli space of logarithmic stable maps. There is a finite splitting morphism:
\[\Ecal \to \left(\M{0}{(1)}{\PP^m|H}{1}^{\times m}\times_{H^m}H\right)\times \overline{\Mcal}^{\leftrightarrow}_{1,(-1,\ldots,-1,d)}(\PP^1|0+\infty,d),\]
where the latter is the moduli space of genus one rubber maps. To obtain $\widetilde{\Dcal}$ from $\Ecal$ we replace the final factor by $\VZ^{\leftrightarrow}_{1,(-1,\ldots,-1,d)}(\PP^1|0+\infty,d)$ (see \S \ref{subsection rubber}).

The first  factor can be described very concretely as the choice of a point $p$ of $H$, and $d$ lines through it. An important observation is that, generically, the vertices which lie on the circle of radius $\delta$ and map to $0\in\mathbb R_{\geq 0}$ are automatically aligned. Indeed, if $R_i$ denotes the $i$th tail of the curve, glued to $E$ at the node $q_i$, then the stable map provides us with identifications:
\[\operatorname{d}\! f_{i,q_i}\colon T_{R_i,q_i}\cong N_{H/\PP^m,p}\]
which we may compose to obtain attaching data $\theta_{ij}$  (see \S \ref{S:ellsing}). This produces a contraction $C\to\overline C$ to an elliptic $d$-fold point. The dimension of $\widetilde\Dcal$ coincides with the dimension of $\Ecal$, which is $dm + m - 1$.

Assume first that $d\leq m$. For the absolute factorisation to hold, it is necessary (but not sufficient) for the $d$ lines $f_i(R_i)$ to span a subspace of $T_{\PP^m,p}$ of dimension at most $d-1$. The locus of maps satisfying this condition has dimension:
\begin{equation*} (dm+m-1) - (m-d+1) = dm + d -2.\end{equation*}
Without loss of generality, we may assume that $H=\{z_0=0\}$, $p=[0,\ldots,0,1]$, and that the $d$ lines span the subspace $\{z_{d-1}=\ldots=z_{m-1}=0\}$. The sections $z_i$ for $i=d-1,\ldots,m$ vanish identically near $E$, so descend automatically to the singularity. On the other hand the section $z_0$ descends to the singularity by the construction of $\overline C$. Finally, the sections $z_i$ for $i=1,\ldots,d-2$ impose independent conditions to descend to $\overline C$. Imposing these conditions, we find that $\Dcal$ has dimension 
\begin{equation*} (dm+d-2)-(d-2) = dm \end{equation*}
making it into a divisor in $\VZ_{1,(d)}(\PP^m|H,d)$. The case $d>m$ is similar, except that the lines $f_i(R_i)$ are automatically linearly dependent and so this condition does not need to be imposed.

We express the factorisation condition tautologically by combining the isomorphisms $\theta_{ij}$ above with the differential of $f$. Factorisation amounts to the vanishing of the following vector bundle map:
\[T_{R_1,q_1}\xrightarrow{(1,\theta_{12}\ldots,\theta_{1d})}\bigoplus_{i=1}^d T_{R_i,q_i}\xrightarrow{\sum\circ\operatorname{d}\!f=\operatorname{d}\!z_1+\ldots\operatorname{d}\!z_{d-2}} {N_{H\cap\PP^{d-1}|\PP^{d-1}}}|_p.\]
For strata with higher tangency, we will see that the automatic alignment only occurs for vertices on the circle which map to $0\in\RR_{\geq 0}$. For internal components which lie on the circle, the alignment must be imposed separately. This means that $\widetilde\Dcal$ is a compactified torus bundle over $\Ecal$ (see \S\ref{S:ellsing}). On the other hand, for higher contact orders, the vector bundle map $\Sigma\circ\operatorname{d}\!f$ to $T_{\PP^m,p}$ factors naturally through $T_{H,p}$. This is discussed in detail in \S\S \ref{subsection D from Dtilde} and \ref{section D from Dtilde second}.

\subsubsection{Recursive description of $\Ecal$} Recall that we have maps
\[
\VZ_{1,\alpha}(\PP^m|H,d)\hookrightarrow \widetilde{\VZ}_{1,\alpha}(\PP^m|H,d)\to \overline\Mcal_{1,\alpha}(\PP^m|H,d).
\]
The first morphism is a closed embedding cut out by the two factorisation properties. The second morphism is a logarithmic modification. Let $\tau \in \Sigma_{1,\alpha}(\PP^m|H,d)$ be the ray corresponding to $\Dcal \subseteq \VZ_{1,\alpha}(\PP^m|H,d)$, and to the virtual divisor $\widetilde \Dcal \subseteq \widetilde{\VZ}_{1,\alpha}(\PP^m|H,d)$. 

Let $\sigma \in \Sigma^{\Kim}$ be the minimal cone containing the image of $\tau$. The dimension of this cone could be large (see again the left-hand side of Figure \ref{fig:off_we_go}) -- its dimension is equal to the number of levels of the target expansion in the type $\Delta$. The cone $\sigma$ determines a stratum $\Ecal \subseteq \ol\Mcal_{1,\alpha}(\PP^m|H,d)$, and the morphism above restricts to a morphism
\[
\Dcal \to \Ecal.
\] 

\noindent
{\bf Splitting in the ordinary geometry.} The locus $\Ecal$ has the following description. Let $\Lambda_0$ denote the collection of stable vertices of the combinatorial type which are mapped to $0\in \RR_{\geq 0}$, and use $\Lambda_{>0}$ for the stable vertices which are mapped to $\RR_{>0}$. Each vertex $v$ has associated discrete data $\Gamma_v$ (genus, degree, contact order). The vertices over $0$ determine moduli spaces of relative stable maps, while the higher level vertices determine maps to rubber.  The stratum $\Ecal$ admits a finite splitting morphism
\begin{equation}\label{IIIa fibre product} \Ecal \xrightarrow{\rho} \left( \prod_{v \in \Lambda_0} \ol\Mcal_{\Gamma_v}(\PP^m|H) \times \prod_{v \in \Lambda_{>0}} \ol\Mcal^{\leftrightarrow}_{\Gamma_v}(\mathbb{P}|H_0+H_\infty) \right) \times_{H^{2\epsilon}} H^{\epsilon} \end{equation}
where $\epsilon$ is the number of edges of the stabilised dual graph, and the fibre product is taken over the appropriate evaluation maps. Splitting the diagonal of $H^{2\epsilon}$ we see that tautological integrals over $\Ecal$ are completely determined in terms of integrals over moduli spaces with ``smaller'' combinatorial data.\medskip

\noindent
{\bf Exportation to the reduced geometry.}  There is a virtual divisor $\widetilde\Dcal \subseteq \widetilde\VZ_{1,\alpha}(\PP^m|H,d)$ such that 
\[
\Dcal = \widetilde\Dcal \cap \VZ_{1,\alpha}(\PP^m|H,d).
\]
We wish to use the splitting of $\Ecal$ to recursively compute integrals on $\Dcal$. Noting that $\Dcal$ is cut out of the excess-dimensional space $\widetilde \Dcal$ by the factorisation properties, we make the reduction in two steps:
\begin{enumerate}
\item in \S \ref{subsection Dtilde from E} we explain how to construct $\widetilde\Dcal$ from $\Ecal$;
\item in \S \ref{subsection D from Dtilde} we explain how to express the class of $\Dcal \subseteq \widetilde\Dcal$ in terms of tautological classes.
\end{enumerate}
To compute integrals over $\Dcal$ we simply push down the appropriate classes from $\widetilde\Dcal$ to $\Ecal$ and then integrate over $\Ecal$ using the ordinary splitting axiom.

\begin{remark}[Splitting degree and vanishing order, revisited] Similarly to \S \ref{subsubsection splitting degree} above, we can calculate the degree of $\rho$ and the vanishing order $\lambda_{\Dcal}$ of the section $s_k$ along $\Dcal$. The degree of $\rho$ is given exactly as in Lemma \ref{lem:saturation}, where we obtain one such factor for each bounded edge of the tropical target. The vanishing order $\lambda_{\Dcal}$ is more delicate, but may be computed as the index of an explicit morphism of one-dimensional lattices, explained in the proof of Lemma \ref{lemma vanishing order}.
\end{remark}

\subsubsection{Compactified bundles over strata: $\widetilde\Dcal$ from $\Ecal$}\label{subsection Dtilde from E} Recall that the map $\widetilde\Dcal \to \Ecal$ is obtained by restricting the logarithmic blowup
\begin{equation*} \widetilde\VZ_{1,\alpha}(\PP^m|H,d) \to \ol\Mcal_{1,\alpha}(\PP^m|H,d)\end{equation*}
to the locus $\widetilde\Dcal$. Toroidal geometry constructs $\widetilde \Dcal$ as a compactification of a torus bundle over a locally closed stratum in $\overline\Mcal_{1,\alpha}(\PP^m|H,d)$. \medskip


\noindent
{\bf Logarithmic structures on strata.} Let $W\subseteq X$ be an irreducible stratum in a toroidal embedding $X$. The stratum $W$ is itself toroidal, or, equivalently, when equipped with the divisorial logarithmic structure coming from intersecting with the toroidal boundary of $X$, the resulting structure makes $W$ logarithmically smooth. Note that this differs from the logarithmic structure pulled back from $X$ along the inclusion of $W$ into $X$. 

If $X$ and $W$ are logarithmic but not toroidal, there is an analogue. There exists a strict map
\[
X\to \mathcal A_X,
\]
where $ \mathcal A_X$ is the Artin fan of $X$, which is logarithmically \'etale over a point. The logarithmic structure of $\mathcal A_X$ is divisorial in the smooth topology. The Artin fan $\mathcal A_X$ possesses closed strata. Fix an irreducible stratum $\mathcal A_W$. In identical fashion to the paragraph above, this stack $\mathcal A_W$ possesses a logarithmic structure by restricting the divisors on $\mathcal A_X$ to $\mathcal A_W$ and taking the induced logarithmic structure. This equips a logarithmic stratum $W\hookrightarrow X$ with a logarithmic structure by pullback from the Artin fan, mirroring the construction for toroidal embeddings above. 

\begin{remark}
The inclusion of a stratum $W\hookrightarrow X$ is \textit{not} logarithmic. The pullback logarithmic structure on a closed stratum includes residual information about the normal bundle of the stratum; the construction above removes these directions. By the equivalence of Artin fans with cone stacks~\cite{CavalieriChanUlirschWise}, the above construction can be combinatorialised. Given a cone $\sigma$ in a cone stack $\Sigma$, its \textbf{star} consists of all cones of $\Sigma$ that contain $\sigma$; these naturally form a new cone complex whose Artin fan gives the logarithmic structure on $W$ above.
\end{remark}

Equipped with the strata logarithmic structures, there is a logarithmic morphism
\[
\widetilde \Dcal\to \Ecal
\]
whose generic fibre is a toric variety.  \medskip

\noindent
{\bf Flattening the map.} The morphism $\widetilde \Dcal\to \Ecal$ is not flat. In order to push integrals forward, it will be convenient to flatten the map by blowing up $\Ecal$. This can be done universally~\cite{AK,Mol16}. 

\begin{figure}
 \begin{tikzpicture}
 \tikzset{cross/.style={cross out, draw=black, fill=none, minimum size=2*(#1-\pgflinewidth), inner sep=0pt, outer sep=0pt}, cross/.default={3pt}}
 
  \draw[fill=black] (1,1) circle (2 pt);
  \draw[fill=black] (0,-1) circle (2 pt);
  \draw[->] (1,1)--node[below,blue]{$m_1$} (2,0) (0,-1)--node[below,blue]{$m_2$} (2,0) (2,0)--node[below,blue]{$m_1+m_2$} (4,0);
  \draw (1,-.5) node[cross]{};
  \draw[fill=white] (2,0) circle (2 pt);
  
  \draw[->] (2,-1) -- (2,-1.8);
  
  \draw[fill=blue] (0,-2) circle (2 pt);
  \draw[blue,->] (0,-2)--node[below=.5cm]{$\Delta_\tau$} (4,-2);
   \draw[blue] (1,-2) node[blue]{$\times$};
   \draw[blue] (2,-2) node[blue]{$\times$};
  
  \draw [red] plot [smooth cycle] coordinates {(1,1) (0,-1) (3.5,0)};
  
  \draw[->] (6,-2) --node[below=.5cm]{$\sigma$} (10,-2) node[right]{$e_1$};
  \draw[->] (6,-2) -- (6,2) node[above]{$e_2$};
  \draw[gray] (6,-2) -- (10,1) node[right]{$(m_2,m_1)=\operatorname{Im}(\tau)$};
 \end{tikzpicture}
\caption{From $\Ecal$ to $\widetilde{\Ecal}$. Typical subdivision of a two-dimensional cone $\sigma$ along the image of a ray $\tau$.}
\label{fig:Etilde}
\end{figure}

The combinatorics of the flattening can made explicit. Consider the map on tropicalizations
\[
\tau=\widetilde{\Dcal}^{\operatorname{trop}} \to \Ecal^{\operatorname{trop}}=\sigma.
\]
The tropical curves parametrised on the left inherit a central alignment from $\widetilde{\VZ}_{1,\alpha}(\PP^m|H,d)$, which orders a certain subset of the edge lengths. The polyhedral criterion for equidimensionality is that every cone of the source maps surjectively onto a cone of the target. There is a universal logarithmic modification $\widetilde\Ecal \to \Ecal$ such that $\widetilde\Dcal \to \widetilde\Ecal$ is flat, constructed by taking the minimal subdivision of $\Ecal^{\operatorname{trop}}$ such that cones map surjectively to cones (see Figure \ref{fig:Etilde}).

This subdivision may be described explicitly by employing the formalism of extended tropicalizations, see for instance~\cite[Section~2]{ACP}. The subdividing cones are obtained from cones in the star of $\tau$ under the limit $e_1=\cdots=e_r=\infty$. Cones in the star of $\tau$ are given by (in)equalities relating distances of pairs of vertices, and there are three cases to consider:
\begin{enumerate}
\item both vertices are components of the subgraph $\sqC \setminus \sqC_0$;
\item both vertices are components of the subgraph $\sqC_0$;
\item one vertex is a component of $\sqC_0$ and one is a component of $\sqC \setminus \sqC_0$.
\end{enumerate}
As we pass to the limit, the first relations impose a central alignment condition on the genus zero moduli, the second relations impose a radial alignment condition on the genus one moduli, and the third relations disappear entirely. We are therefore led to the following moduli theoretic description of the logarithmic modification $ \widetilde \Ecal$.

Keeping in mind the fibre product description of $\Ecal$, construct a modification $\widetilde \Ecal$ from $\Ecal$ as follows:
\begin{enumerate}
\item Given a logarithmic map of type $\Delta$ with splitting nodes $q_1,\ldots,q_r$, remove the genus one piece $\sqC_0$ and formally glue together the $r$ vertices of the tropicalization which carry the splitting nodes, and declare the resulting vertex to be the root of the resulting tree. Impose the central alignment condition, where the special root takes on the r\^ole previously occupied by the core.
\item Replace the fibre product factor in $\Ecal$ associated to the vertex $\sqC_0$
\begin{equation*} \ol\Mcal^{\leftrightarrow}_{\Gamma_{\tiny{\sqC_0}}}(\mathbb P|H_0+H_\infty) \end{equation*}
with the radially aligned rubber space:
\begin{equation*} \VZ^{\leftrightarrow}_{\Gamma_{\tiny{\sqC_0}}}(\mathbb P|H_0+H_\infty).\end{equation*}
(Here we are imposing fibre-direction factorisation in order to isolate the main component.)
\end{enumerate}
This procedure induces a logarithmic modification $\widetilde\Ecal \to \Ecal$ which admits a modular description as a fibre product of genus zero centrally aligned maps coming from the vertices $\sqC_1,\ldots,\sqC_r$, and genus one aligned rubber maps satisfying fibre-direction factorisation from the vertex $\sqC_0$.


We have produced a factorisation
\begin{equation*}\widetilde\Dcal\to\widetilde \Ecal\to\Ecal,
\end{equation*}
where the first map is flat and the second map is a logarithmic modification. $\widetilde\Dcal$ is a compactification of a torus bundle over $\widetilde\Ecal$ whose strata can be read off from the tropicalization; in particular, we have the information of how (polynomials in) the classes of boundary strata push forward to $\widetilde\Ecal$. In fact, we can say more and describe the generic fibre of $\widetilde\Dcal \to \widetilde\Ecal$:
\begin{lemma}\label{lem:generic_proj_bundle}
Generically over $\widetilde\Ecal$, the map $\widetilde \Dcal\to\widetilde\Ecal$ is a projectivised tautological vector bundle.
\end{lemma}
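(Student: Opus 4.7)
The plan is to identify the generic fibre of $\widetilde\Dcal \to \widetilde\Ecal$ with the compactified moduli of attaching data introduced in \S \ref{S:ellsing}, and then to recognise this compactification as the projectivisation of a tautological vector bundle assembled from tangent lines at the splitting nodes.

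First I would fix a geometric point $\bar t$ in the open stratum of $\widetilde\Ecal$ and analyse the fibre of $\widetilde\Dcal \to \widetilde\Ecal$ over $\bar t$. By the modular description of $\widetilde\Ecal$ in \S \ref{subsection Dtilde from E}, the datum $\bar t$ consists of a centrally aligned tree of genus-zero maps with root components $\sqC_1, \ldots, \sqC_r$, together with a radially aligned genus-one rubber map from $\sqC_0$, attached at splitting nodes $q_1, \ldots, q_r$ with matching evaluations in $H$. Upgrading this to a point of $\widetilde\Dcal \subseteq \widetilde\VZ_{1,\alpha}(\PP^m|H,d)$ amounts to specifying the contraction of the subcurve containing $\sqC_0$ to a Gorenstein elliptic $r$-fold singularity whose branches are the $\sqC_i$ meeting at the images of the $q_i$. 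By Corollary \ref{cor:maps_from_elliptic_sing} together with the discussion in \S \ref{S:ellsing}, this is precisely the specification of attaching data at the elliptic $r$-fold point.

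Second I would invoke the linear-algebraic description from \S \ref{S:ellsing}. On the open part of the moduli $\Gm^r$, the attaching data correspond to a general line in the direct sum of branch tangent lines $V_{\bar t} = \bigoplus_{i=1}^r T_{\sqC_i, q_i}$, and the sprouting compactification described in \S \ref{S:ellsing} is exactly what allows this line to degenerate to a line meeting the coordinate hyperplanes. Hence the compactified fibre is the full $\PP(V_{\bar t})$. As $\bar t$ varies through the open stratum, the lines $T_{\sqC_i, q_i}$ are tautological: each is the dual of the universal cotangent line on the corresponding genus-zero factor of $\widetilde\Ecal$ at the splitting section. Consequently they assemble into a rank-$r$ vector bundle $\mathbf V$ on a dense open substack of $\widetilde\Ecal$, and the identifications above globalise to an isomorphism $\widetilde\Dcal \cong \PP(\mathbf V)$ over this open locus.

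The main obstacle I anticipate is promoting the pointwise identification to a family statement, that is, ruling out further subdivisions of the cone $\tau$ that could locally split a boundary stratum of $\Gm^r \subset \PP(V_{\bar t})$ and destroy the projective bundle structure. This reduces to a combinatorial check that the flattening construction of \S \ref{subsection Dtilde from E} has already absorbed every such subdivision—equivalently, that every wall in the star of $\tau$ corresponding to a sprouting direction is a face of a cone in $\widetilde\Ecal^{\mathrm{trop}}$. Once this verification is in place, the functoriality of the sprouting construction together with the fibre-product presentation of $\widetilde\Ecal$ supplies the tautological family structure and concludes the proof.
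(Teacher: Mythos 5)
Your proposal follows the same overall strategy as the paper (identify the generic fibre with the compactified moduli of attaching data, then realise that compactification as a projective bundle of tangent lines at the splitting nodes), but it misses a correction that is essential to get the rank of the bundle right. You assert the fibre is $\PP(V_{\bar t})$ for $V_{\bar t}=\bigoplus_{i=1}^r T_{\sqC_i,q_i}$, a rank-$r$ bundle. This overcounts. For every vertex $\sqC_i\in\Lambda_0$ (i.e.\ a circle vertex whose image under the tropical map is $0\in\RR_{\geq 0}$), the identification of $T_{q_i}C_i$ with the other branch tangent lines is already determined generically on $\widetilde\Ecal$: by the combinatorial description of rays there are no stable vertices strictly inside the circle, so all edges from a $\Lambda_0$ vertex to the core carry the same expansion factor $m$, whence $T_{q_i}C_i^{\otimes m}$ is canonically identified with the target tangent line in the fibre product \eqref{IIIa fibre product}, and after saturation the $T_{q_i}C_i$ themselves are canonically isomorphic. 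In concrete terms, the differential of the map to $(\PP^m,H)$ already supplies identifications $\mathrm{d}f_{i,q_i}\colon T_{R_i,q_i}\cong N_{H/\PP^m,p}$ for those branches, so that attaching datum is not a free parameter of the fibre; only one summand can be taken to represent all of $\Lambda_0$ in \eqref{eqn:projectivebundle}.

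The concrete consequence that your version fails is the dimension count in \S\ref{S:lines}: there $r=d$ and every tooth lies in $\Lambda_0$, so your $\PP(V_{\bar t})$ would be a $\PP^{d-1}$-bundle, whereas the paper checks that $\dim\widetilde\Dcal=\dim\Ecal$, i.e.\ the fibre is a point. The obstacle you flag (promoting the pointwise identification to a family, ruling out stray subdivisions) is not where the difficulty lies; the tropical picture directly exhibits the fan of a projective space once the $\Lambda_0$ collapse is taken into account. You would need to replace $V_{\bar t}$ by the quotient/sub-bundle obtained by merging the $\Lambda_0$ summands into one before the conclusion holds.
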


\begin{proof}
Let us assume that the curve does not degenerate, so that the dual graph is as prescribed by $\Delta$. In particular there are vertices $\plC_1,\ldots,\plC_r$ generically lying on the circle of radius $\delta$. The moduli of attaching data for the elliptic singularity is the complement of the coordinate hyperplanes in:
\begin{equation}\label{eqn:projectivebundle}
 \PP\left(\bigoplus_{i=1}^rT_{q_i}C_i\right).
\end{equation}
This can also be thought of as parametrising compatible identifications $\theta_{ij}\colon T_{C_i,q_i}\cong T_{C_j,q_j}$ (see \S \ref{S:ellsing}). A point in this space determines a map from the rational $r$-fold singularity to the elliptic $r$-fold singularity (this coincides with the fibre of the space of aligned curves over the space of nodal curves; see \cite[\S 3.4]{RSPW}). Along the boundary, some of the $\plC_i$ (though not all of them) may move off from the circle, so that the latter subdivides the edge between them and the core. Correspondingly, there is a destabilisation of the curve (semistable components are represented by crosses in our figures), and we obtain an elliptic singularity containing strictly semistable branches (\emph{sprouting}). This was already identified by Smyth as a compactification of the moduli space of attaching data carrying a Gorenstein universal curve \cite[\S 2.2]{SMY2}.

Tropically, pushing some of the $\plC_i$ off the circle creates new finite-length edges with coordinates $\ell_i\in\RR_{\geq_0}$ (see the right-hand side of Figure \ref{fig:off_we_go}). Since not all of them can be strictly positive, the resulting fan is that of a projective space. This describes $\widetilde\Dcal\to\widetilde\Ecal$ fibrewise over the interior.

A slight modification of the argument above is required to account for the fact that all vertices that lie on the circle and are mapped to $0\in\mathbb R_{\geq0}$ by the tropical map are already generically aligned on $\widetilde\Ecal$. Indeed, by the combinatorial description of rays in \S \ref{S:combinatorialdescription}, there are no stable vertices inside the circle, so all the edges going from a vertex of $\Lambda_0$ to the core have the same expansion factor $m$ (maximal among the contact orders at the core). Therefore, for all $i$ such that $\plC_i\in\Lambda_0$, the $T_{q_i}C_i^{\otimes m}$ can be identified to the target tangent line on the fibre product \eqref{IIIa fibre product}, and, after saturation, the $T_{q_i}C_i$ themselves are canonically isomorphic on $\widetilde\Ecal$. Thus, only one summand should be taken to represent all of them in the projective bundle \eqref{eqn:projectivebundle}.
\end{proof}

\begin{figure}
\includegraphics[scale=.7]{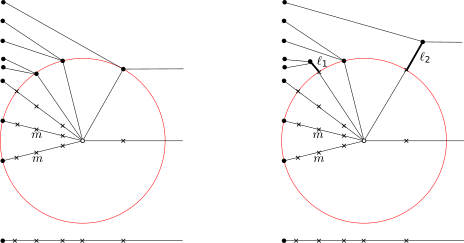}
\caption{Moving vertices off the circle creates new tropical parameters.}\label{fig:off_we_go}
\end{figure}

\subsubsection{$\Dcal$ from $\widetilde\Dcal$: first way}\label{subsection D from Dtilde} On $\widetilde\Dcal$ there is a tautological line bundle arising from the modular description of $\widetilde\VZ_{1,\alpha}(\PP^m|H,d)$, namely $\OO(\delta)$. Recall that the piecewise linear function $\delta$ may be viewed as a global section of the characteristic sheaf of the logarithmic structure on $\widetilde\VZ_{1,\alpha}(\PP^m|H,d)$. The toric geometry dictionary gives rise to an associated Cartier divisor. Alternatively, for any logarithmic stack $X$, the natural coboundary map induced by $$0\to \mathcal O_X^\star\to M_X^\text{gp}\to \overline M_X^\text{gp}\to 0$$ associates a $\mathcal O_X^\star$-torsor to a global section of the characteristic sheaf.

\begin{lemma}\label{lemma fibres of Odelta}
The fibre of $\OO(\delta)$ over a moduli point is naturally isomorphic to
\begin{equation*} T_{p}C^\prime \otimes \mathbb T \end{equation*}
where $C^\prime$ is any component of $C$ which lies on the circle of radius $\delta$, $p$ is the node separating it from the core, and $\mathbb T$ is the universal tangent line bundle on the moduli space for $\sqC_0$.
\end{lemma}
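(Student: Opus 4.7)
The plan is to identify $\mathcal{O}(\delta)$ by running the coboundary of
\[0 \to \mathcal{O}_X^\star \to M_X^{\mathrm{gp}} \to \overline M_X^{\mathrm{gp}} \to 0\]
against the tropical description of $\delta$ as the common distance from the core to the circle along any path.

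First, I would record the standard identification underlying the argument: for a single node $p$ in the universal curve with branches $C_1, C_2$ and logarithmic smoothing parameter $\ell_p \in \overline M_S$, the line bundle associated via the coboundary has fibre canonically isomorphic to $T_p C_1 \otimes T_p C_2$. This is the usual deformation-theoretic reading of $\ell_p$ on the universal curve over $\mathfrak{M}^{\mathrm{log}}_{g,n}$, and it is the only nontrivial ingredient needed beyond bookkeeping.

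Second, I would analyse the generic tropical picture along $\widetilde{\Dcal}$. The combinatorial classification of type $\dag$ strata (Lemmas \ref{lemma type C0 combinatorial types}--\ref{lem:stable_vertices} combined with \S \ref{S:combinatorialdescription}) implies that at a general point every component $C'$ on the circle of radius $\delta$ is attached to the contracted disc directly through a single node $p$, with no stable intermediate vertex along the path to $\sqC_0$. Hence along this stratum $\delta$ pulls back to the single edge length $\ell_p$, and the first step immediately gives
\[
\mathcal{O}(\delta)\big|_{\mathrm{fibre}} \;\cong\; T_p C' \otimes T_p \sqC_0.
\]
The second factor is the pullback, along the universal marking corresponding to $p$, of the universal tangent line bundle $\mathbb T$ on the moduli space parametrising $\sqC_0$. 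Independence from the choice of circle component is built in: the compatible identifications $\theta_{ij}\colon T_{q_i}C_i \cong T_{q_j}C_j$ provided by the attaching data (\S \ref{S:ellsing}) match up the resulting isomorphisms of $\mathcal{O}(\delta)|_{\mathrm{fibre}}$ with $T_p C' \otimes \mathbb T$ for different $C'$.

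The main obstacle I expect is extending the identification across the sprouting boundary of $\widetilde{\Dcal}$, where a strictly semistable $\mathbb{P}^1$ gets interposed between $\sqC_0$ and some $C'$, so that $\delta$ decomposes as a sum of two edge lengths and the naive formula would pick up an extra tangent-line contribution from the bridge. The bridge carries only the two distinguished nodes and is rigid, so its tangent lines at these nodes have opposite $\Gm$-weights under the stabiliser and consequently pair to a canonically trivial line. I would formalise this telescoping by tracing the relevant piecewise linear functions through the extended tropicalization used in the proof of Lemma \ref{lem:generic_proj_bundle}; once this cancellation is checked in families, the isomorphism globalises from the generic locus to all of $\widetilde{\Dcal}$.
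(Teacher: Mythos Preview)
Your approach is essentially the paper's: decompose $\delta$ as the sum of edge lengths along the path from $C'$ to the core, use the node-by-node identification $\mathcal O(\ell_e)\cong T_{e^-}\otimes T_{e^+}$, and telescope. The paper does this in one stroke rather than splitting into a generic case plus a sprouting boundary; it simply observes that every intermediate vertex on the path is rational and that the incoming and outgoing tangent lines pair trivially (citing \cite[\S 2.2]{VZ}), so the product collapses to $T_pC'\otimes\mathbb T$.

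One point to tighten: you isolate \emph{sprouting} as the only boundary phenomenon to check, but there is a second kind of degeneration you do not name, namely the internal degeneration of $\sqC_0$ itself within its moduli $\VZ^{\mathrm{rad}}_{1,n}$. When that happens the path from $C'$ to the core $\sqE$ passes through \emph{stable} rational components of the broken $\sqC_0$, and for those the $\mathbb G_{\mathrm m}$-weight argument you give for strictly semistable bridges does not apply. Relatedly, your identification ``$T_p\sqC_0$ is the pullback of $\mathbb T$'' is only literally correct over the locus where $\sqC_0$ is smooth; on the boundary of $\VZ^{\mathrm{rad}}_{1,n}$ the universal tangent line $\mathbb T$ differs from $T_p\sqC_0$ by exactly the exceptional twists recorded in its definition. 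These twists are what the residual tangent-line factors from the stable intermediate vertices supply, and this is precisely the content of the VZ reference the paper invokes. Once you note this, your argument and the paper's coincide; the paper's uniform telescoping just packages the two boundary cases (sprouting and $\sqC_0$-degeneration) into a single sentence rather than treating them separately.
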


We explain the meaning of $\mathbb{T}$. Isolating the subcurve $C_0 \subseteq C$ gives rise to a morphism from $\widetilde\Dcal$ to the moduli space of radially aligned curves of genus one:
\begin{equation*} \widetilde\Dcal \to \VZ^{\text{rad}}_{1,n}.\end{equation*}
The latter space coincides with the Vakil--Zinger blow-up of $\overline \Mcal_{1,n}$ \cite[\S 2.3]{VZ}. In particular, after twisting by exceptional divisors, the tangent line bundles at two marking $\widetilde{\mathbb T}_i$ and $\widetilde{\mathbb T}_j$ are identified (via the dual Hodge bundle $\mathbb E^\vee$), extending the identification on the smooth locus provided by regarding the elliptic curve as an algebraic group. This gives rise to a single line bundle -- the ``universal tangent line bundle'' -- on $\VZ^{\text{rad}}_{1,n}$; we let $\mathbb{T}$ denote its pullback to $\widetilde\Dcal$.
\begin{proof}
Choose a component $C^\prime$ of the curve which lies on the circle. Then $\OO(\delta)$ is equal to the tensor product of tangent line bundles along the path of edges connecting $C^\prime$ to the core. However, this tensor product of line bundles is telescoping: any vertex which appears in the middle of the path will have an incoming and outgoing node (and possibly other nodes that are irrelevant for this product), and since the vertex corresponds to a smooth rational curve these tangent spaces are naturally dual to each other~\cite[\S 2.2]{VZ}. Thus these factors cancel, and the result follows.
\end{proof}
The content of Lemma \ref{lem:generic_proj_bundle} is that the choice of a point in the fibre of $\widetilde\Dcal \to \Ecal$ provides an identification between the inward-pointing tangent lines on the circle. By Lemma \ref{lemma fibres of Odelta}, they can further be identified with the line bundle $\OO(\delta)\otimes \mathbb T^\vee$ on $\widetilde\Dcal$. 
Corollary \ref{cor:maps_from_elliptic_sing} therefore expresses the factorisation condition as the degeneracy of the following morphism of vector bundles on $\widetilde\Dcal$:
\begin{equation*} \Sigma \operatorname{d}\!f \colon \OO(\delta)\otimes \mathbb T^\vee \to \ev_q^\st T H. \end{equation*}
Here $\Sigma \operatorname{d}\!f$ is defined by summing the images of the tangent vectors at the inward-pointing nodes on the circle, using the natural identification of each of these tangent spaces with $\OO(\delta)\otimes \mathbb T^\vee$ established above. This morphism takes values in $\ev_q^\star TH$ rather than $\ev_q^\star T\PP^m$ because the contact order of the exterior components with $H$ is strictly larger than $1$ (except for the case of lines, dealt with separately in \S \ref{S:lines}). This gives a section of the vector bundle
\begin{equation*} V = (\ev_q^\st TH) \otimes \mathbb T \otimes \OO(-\delta) \end{equation*}
which is transverse and whose vanishing locus coincides with $\Dcal$. Transversality follows by construction, because any stratum of the boundary on which $\Sigma \operatorname{d}\!f$ vanishes has been (blown up and) included into $\mathbb T \otimes \OO(-\delta)$ as a twist; in other words, the circle of radius $\delta$ always passes through at least one component which is not contracted by the map to the expansion. If the vanishing locus of $\Sigma \operatorname{d}\!f$ does not contain any stratum of the boundary, it is the closure of its intersection with the open stratum. Thus we obtain:
\begin{prop} \label{class of D} $[\Dcal] = \operatorname{e}(V) \cap [\widetilde\Dcal]$.\end{prop}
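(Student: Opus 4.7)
The statement is a standard Euler-class identity once one has (i) a section of $V$ whose scheme-theoretic zero locus equals $\Dcal$, and (ii) transversality of this section to the zero section of $V$. First I would assemble the section from the data already constructed: the differential $\operatorname{d}\!f$ at each node on the circle of radius $\delta$, combined with the identification in Lemma \ref{lemma fibres of Odelta} and the identifications of inward-pointing tangent lines coming from Lemma \ref{lem:generic_proj_bundle}, assembles into a morphism $\OO(\delta)\otimes\mathbb T^\vee\to\ev_q^\st TH$ of vector bundles on $\widetilde\Dcal$, which is the same datum as a section $\sigma\in H^0(\widetilde\Dcal,V)$.

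Next I would check that the vanishing locus of $\sigma$ coincides set-theoretically with $\Dcal$. This is essentially the content of Corollary \ref{cor:maps_from_elliptic_sing}: a map from an elliptic $r$-fold point to a target is equivalent to a map from its seminormalisation whose differential annihilates the distinguished tangent vector $v$ at the rational $r$-fold point. Under the identifications parametrised fibrewise by the projective bundle in \eqref{eqn:projectivebundle}, the vector $v$ is precisely the sum of the inward-pointing tangent vectors on the circle. Vanishing of $\Sigma\operatorname{d}\!f$ therefore says exactly that the collapsed map $C\to\PP^m$ descends through the contraction to $\overline C_B$, which is the factorisation condition cutting $\Dcal$ out of $\widetilde\Dcal$. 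The fact that the image lies in $\ev_q^\st TH$ rather than $\ev_q^\st T\PP^m$ uses that the contact order of each exterior component with $H$ is strictly greater than $1$ in the type $\dag$ case treated here.

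The main content is verifying that $\sigma$ is transverse to the zero section, so that the scheme-theoretic zero locus has the expected codimension $\operatorname{rk} V = m-1$. Failure of transversality could only occur along a boundary stratum $B\subseteq\widetilde\Dcal$ on which $\Sigma\operatorname{d}\!f$ vanished identically, either because every component on the circle were contracted by $f$, or because the identifications of tangent lines provided by the attaching data collapsed. Both possibilities are ruled out by the construction of $\widetilde\Dcal$ in \S \ref{subsection Dtilde from E}: the central alignment condition requires that the circle of radius $\delta$ pass through at least one component of positive degree (see Lemma \ref{lem:stable_vertices}), and the sprouting/projective bundle step absorbs the locus where the tangent directions would degenerate into the twist by $\mathbb T\otimes\OO(-\delta)$. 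Concretely, I would enumerate the boundary strata via the combinatorial types described in \S \ref{S:combinatorialdescription} and Lemma \ref{lemma type C0 combinatorial types}, and verify case by case that each either lies generically outside $Z(\sigma)$ or has been blown up into the twist, so that the restriction of $\sigma$ to any boundary stratum is not identically zero.

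The hard part is precisely this transversality check, and specifically the exclusion of boundary strata contained in $Z(\sigma)$ — this is where the delicate interaction between elliptic singularities, their moduli of attaching data, and the central alignment condition enters, and it is what the compactified torus bundle structure of $\widetilde\Dcal$ over $\widetilde\Ecal$ is designed to encode. Once transversality is in hand, the identity $[\Dcal]=\operatorname{e}(V)\cap[\widetilde\Dcal]$ is the standard formula for the class of the transverse vanishing locus of a section of a vector bundle.
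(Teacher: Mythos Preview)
Your proposal is correct and follows essentially the same approach as the paper: construct the section $\Sigma\operatorname{d}\!f$ of $V$, identify its vanishing locus with $\Dcal$ via Corollary~\ref{cor:maps_from_elliptic_sing}, and argue transversality by observing that any boundary stratum on which the section would vanish identically has already been absorbed into the twist $\mathbb T\otimes\OO(-\delta)$, so that the circle of radius $\delta$ always meets a component not contracted by the map to the expansion. One small citation slip: the fact that the circle passes through a component of positive degree is the compatibility condition imposed in the definition of the central alignment (see the absolute geometry background in \S\ref{section construction}), not Lemma~\ref{lem:stable_vertices}, which rather asserts that all vertices on the circle are stable.
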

We claim that the class $\operatorname{e}(V)$ is tautological and computable. The only part which needs justification is $\OO(\delta)$. We claim that this can be obtained by pulling back a line bundle from $\Ecal$ and twisting by exceptional divisors.
\begin{lemma} For $i\in \{1,\ldots,r\}$ consider the line bundle
\begin{equation*} T_i = T_{q_i} C_i \otimes T_{q_i} C_0 \end{equation*}
on $\Ecal$, where $q_i$ is the splitting node. Then $\OO(\delta)$ is isomorphic to the pull-back of $T_i$ to $\widetilde\Dcal$ twisted by all exceptional divisors.\end{lemma}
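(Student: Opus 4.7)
The line bundle $\OO(\delta)$ on $\widetilde\VZ_{1,\alpha}(\PP^m|H,d)$ arises from the piecewise linear function $\delta$ via the coboundary map of $0\to\OO^\times\to M^{\mathrm{gp}}\to\overline M^{\mathrm{gp}}\to 0$. My plan is to compare $\OO(\delta)$ with $\pi^\ast T_i$ on $\widetilde\Dcal$, where $\pi\colon\widetilde\Dcal\to\Ecal$ is the restriction of the logarithmic modification $\widetilde\VZ_{1,\alpha}(\PP^m|H,d)\to\ol\Mcal_{1,\alpha}(\PP^m|H,d)$, by constructing a canonical isomorphism on the generic locus and then propagating it across the exceptional divisors.

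\textbf{Step 1.} On the dense open locus of $\widetilde\Dcal$ where no sprouting has occurred, the external vertex $\sqC_i$ is directly attached to the core $\sqC_0$ through the node $q_i$, and the central alignment condition forces $\delta = \ell_{q_i}$, where $\ell_{q_i}\in\overline M$ is the smoothing parameter at $q_i$. By the standard identification of the line bundle associated to a smoothing parameter with the tensor product of the two branch tangent lines at the node, $\OO(\delta)\cong T_{q_i}C_i \otimes T_{q_i}C_0$ on this open locus. This realises the claim over the generic point.

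\textbf{Step 2.} To extend across the exceptional locus, I examine a stratum where sprouting introduces intermediate semistable components $R^{(1)},\ldots,R^{(s)}$ between $\sqC_i$ and $\sqC_0$, with edge-length parameters $\ell_1,\ldots,\ell_s$. The alignment now reads $\delta = \ell_{q_i} + \ell_1 + \cdots + \ell_s$. Translating this identity to line bundles, and using that the tangent lines at the two marked points of each semistable component $R^{(j)}\cong\PP^1$ are canonically dual and therefore telescope, I obtain
\[
\OO(\delta) \;\cong\; T_{q_i}C_i \,\otimes\, \mathbb T \,\otimes\, \OO(\text{sprouting exceptional divisors})
\]
globally on $\widetilde\Dcal$, where $\mathbb T$ is the pullback of the universal tangent line from the Vakil--Zinger space $\VZ^{\text{rad}}_{1,n}$. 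This upgrades the fibrewise statement of Lemma \ref{lemma fibres of Odelta} to an isomorphism of line bundles. Finally, by the discussion following that lemma and \cite[\S 2.3]{VZ}, $\mathbb T$ agrees with the marked-point psi-line $T_{q_i}C_0$ after twisting by the Vakil--Zinger exceptional divisors on $\VZ^{\text{rad}}_{1,n}$, pulled back to $\widetilde\Dcal$. Combining the two identifications yields the claim.

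The main obstacle is the bookkeeping in Step 2: one must identify exactly which exceptional divisors of $\pi$ appear, and with what multiplicities, in the global identification. The exceptional divisors split into two families -- those arising from central alignment subdivisions (the ``sprouting'' divisors above) and those arising from the Vakil--Zinger blow-up on the genus-one factor -- and together they account for the full exceptional locus of $\pi$, which is the content of the phrase ``twisted by all exceptional divisors'' in the statement. Verifying the multiplicities is a finite combinatorial computation on the cones of $\Sigma_{1,\alpha}(\PP^m|H,d)$, rendered tractable by the indexing of strata from \S\ref{subsection indexing strata}.
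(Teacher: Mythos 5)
Your proposal takes essentially the same route as the paper: interpret $\OO(\delta)$ as the line bundle of the distance-to-core piecewise linear function, identify it on the generic locus with the smoothing line bundle $T_{q_i}C_i \otimes T_{q_i}C_0$ at the node $q_i$, and extend across the exceptional locus via the telescoping of tangent lines along chains of semistable $\PP^1$'s. The paper itself offers no formal proof of this lemma, only the worked example that follows it, so your sketch is at a comparable level of rigour.

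One remark on Step~2: routing the comparison through $\mathbb T$ before equating $\mathbb T$ with a twist of $T_{q_i}C_0$ is a detour. The cleaner path, and the one exhibited in the paper's example, is to compare $\OO(\delta)$ to $\pi^\st T_i$ directly: the PL difference $\delta - \ell_{q_i}$ is a nonnegative combination of the intermediate edge lengths on the chain from $\sqC_i$ to the core, the associated product of smoothing line bundles telescopes against $\pi^\st T_i^{-1} \otimes \OO(\delta)$, and what survives is precisely $\OO$ of the exceptional divisors. You are right to flag that the remaining content is to check each exceptional divisor enters with coefficient one (which the statement ``twisted by all exceptional divisors'' tacitly assumes); the paper elides this too, verifying it only in its displayed example, so your honest acknowledgement of the bookkeeping matches the paper's own treatment rather than falling short of it.
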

We illustrate this lemma in the following example, which brings out the interplay between logarithmic moduli and blowups.
\begin{example}
Consider the locus $\Ecal$ given by $\sigma$ below, and the codimension-one locus $\widetilde\Dcal_\rho \subseteq \widetilde\Dcal_\sigma$ given by the two-dimensional (due to the alignment) cone $\rho$:
\begin{figure}[!htbp]
\begin{minipage}{0.3\textwidth}
\centering
\begin{tikzpicture}

\draw[fill=black] (0,0) circle[radius=2pt];
\draw (0,0) node[above]{\small$\sqC_1$};
\draw[fill=black] (0,0) -- (2,-1);
\draw (1,-0.95) node[above]{\small$e_1$};
\draw[color=blue] (1,-0.5) node[above]{\small$2$};

\draw[fill=black] (0,-2) circle[radius=2pt];
\draw (0,-2) node[above]{\small$\sqC_2$};

\draw[->] (0,-2) -- (0,-2.5);
\draw[color=blue] (0,-2.5) node[below]{\small$0$};

\draw[fill=black] (0,-2) -- (2,-1);
\draw (1,-2) node[above]{\small$e_2$};
\draw[color=blue] (1,-1.5) node[above]{\small$2$};

\draw[fill=black,->] (2,-1) -- (3.5,-1);
\draw[color=blue] (2.35,-1) node[above]{\small$4$};

\draw[fill=white] (2,-1) circle[radius=2pt];
\draw (2,-1) node[below]{\small$\sqC_0$};

\draw[color=blue,->] (2,-2) -- (2,-3);

\draw[color=blue,->] (0,-3.5) -- (3,-3.5);
\draw[fill=blue] (0,-3.5) circle[radius=2pt];
\draw[fill=blue] (2,-3.5) circle[radius=2pt];
\end{tikzpicture}
\caption{$\sigma$}
\end{minipage}\hfill
\begin{minipage}{0.7\textwidth}
\centering
\begin{tikzpicture}
\draw [red] plot [smooth cycle] coordinates {(-4,-2) (-2,-1) (-2,1) (0,1) (3,-1) (-1,-3.5)};

\draw[fill=black] (0,0) circle[radius=2pt];
\draw (0,0) node[above]{\small$\sqC_1^0$};
\draw[fill=black] (0,0) -- (2,-1);
\draw (1,-0.95) node[above]{\small$e_1$};
\draw[color=blue] (1,-0.5) node[above]{\small$2$};

\draw[fill=black] (0,-2) circle[radius=2pt];
\draw (0,-2) node[above]{\small$\sqC_2^0$};
\draw[fill=black] (0,-2) -- (2,-1);
\draw (1,-2) node[above]{\small$e_2$};
\draw[color=blue] (1,-1.5) node[above]{\small$2$};

\draw[fill=black,->] (2,-1) -- (3.5,-1);
\draw[color=blue] (2.35,-1) node[above]{\small$4$};

\draw[fill=white] (2,-1) circle[radius=2pt];
\draw (2,-1) node[below]{\small$\sqC_0$};

\draw (-2,1) -- (0,0);
\draw[fill=black] (-2,1) circle[radius=2pt];
\draw (-2,1) node[above]{\small$\sqC_1^1$};
\draw (-1,0.5) node[below]{\small$f_1$};
\draw[color=blue] (-1,0.5) node[above]{\small$1$};

\draw (-2,1) -- (-4,1);
\draw[fill=black] (-4,1) circle[radius=2pt];
\draw (-4,1) node[above]{\small$\sqC_1^3$};
\draw (-3,1) node[below]{\small$g_1$};
\draw[color=blue] (-3,1) node[above]{\small$1$};

\draw (-2,-1) -- (0,0);
\draw[fill=black] (-2,-1) circle[radius=2pt];
\draw (-2,-1) node[above]{\small$\sqC_1^2$};
\draw (-1,-0.5) node[below]{\small$f_2$};
\draw[color=blue] (-1,-0.5) node[above]{\small$1$};

\draw (-2,-1) -- (-4,-1);
\draw[fill=black] (-4,-1) circle[radius=2pt];
\draw (-4,-1) node[above]{\small$\sqC_1^4$};
\draw (-3,-1) node[below]{\small$g_2$};
\draw[color=blue] (-3,-1) node[above]{\small$1$};

\draw (0,-2) -- (-2,-2);
\draw[fill=black] (-2,-2) circle[radius=2pt];
\draw (-2,-2) node[above]{\small$\sqC_2^1$};
\draw (-1,-2) node[below]{\small$f_3$};
\draw[color=blue] (-1,-2) node[above]{\small$2$};
\draw[->] (-2,-2) -- (-2,-2.5);
\draw[color=blue] (-2,-2.5) node[below]{\small$0$};

\draw (-2,-2) -- (-4,-2);
\draw[fill=black] (-4,-2) circle[radius=2pt];
\draw (-4,-2) node[above]{\small$\sqC_2^2$};
\draw (-3,-2) node[below]{\small$g_3$};
\draw[color=blue] (-3,-2) node[above]{\small$2$};

\draw[color=blue,->] (0,-2.5) -- (0,-3.5);

\draw[color=blue,->] (-4,-4) -- (3,-4);
\draw[fill=blue] (0,-4) circle[radius=2pt];
\draw[fill=blue] (2,-4) circle[radius=2pt];
\draw[fill=blue] (-2,-4) circle[radius=2pt];
\draw[fill=blue] (-4,-4) circle[radius=2pt];
\end{tikzpicture}
\caption{$\rho$}
\end{minipage}
\end{figure}

On the cone $\rho$ we have the following tropical continuity equations
\begin{align*}
e:=e_1 = e_2, \qquad f:=f_1 = f_2 = 2f_3, \qquad g:=g_1 = g_2 = 2g_3
\end{align*}
together with the equation imposed by the choice of alignment:
\begin{align*} e_1 + f_1 = e_2 + f_3 + g_3 \qquad (\Longleftrightarrow f = g).
\end{align*}

In this example we have:
\[\lambda(\plC^0_1)=\lambda(\plC^0_2)=e,\quad \lambda(\plC^1_1)=\lambda(\plC^2_1)=\lambda(\plC^2_2)=e+f=\delta,\quad \lambda(\plC^3_1)=\lambda(\plC^4_1)=e+2f.\]
From now on we use the same symbol to denote an edge length and the corresponding node on $\widetilde\Dcal_\rho$. On this locus we have line bundles pulled back from $\Ecal$
\begin{align*} T_1 & = T_{e_1} C_1^0 \otimes T_{e_1} C_0 \\
T_2 & = T_{e_2} C_2^0 \otimes T_{e_2} C_0\end{align*}
as well as the logarithmic line bundle
\begin{equation*} \OO(\delta) = T_{f_1} C_1^1 \otimes T_{e_1} C_0 = T_{f_2} C_1^2 \otimes T_{e_1} C_0 = T_{g_3} C_2^2 \otimes T_{e_2}C_0 
\end{equation*}
where the identifications follow from Lemma \ref{lemma fibres of Odelta}. We claim that: 
\begin{equation} \label{Odelta eqn} \OO(\delta) = T_1\otimes \OO(\widetilde\Dcal_\rho) = T_2 \otimes \OO(\widetilde\Dcal_\rho).\end{equation}
Notice that $\OO(\widetilde\Dcal_\rho) = \OO(f)=\OO(g)$. Thus we have
\begin{align*} T_1 \otimes \OO(\widetilde\Dcal_\rho) & = \OO(f_1) \otimes T_{e_1} C_1^0 \otimes T_{e_1}C_0 \\
& = T_{f_1}C_1^1 \otimes T_{f_1} C_1^0 \otimes T_{e_1} C_1^0 \otimes T_{e_1}C_0  \\
& = T_{f_1}C_1^1\otimes T_{e_1}C_0 \qquad (= \OO(\delta))
\end{align*}
where in the last line we have used the same telescoping trick used in the proof of Lemma \ref{lemma fibres of Odelta}, namely that $T_{f_1}C_1^0 \otimes T_{e_1} C_1^0 = \OO$. On the other hand, for $T_2$ we may write $f=f/2 + g/2=f_3+g_3$ in order to obtain:
\begin{equation*} T_2 \otimes \OO(\widetilde\Dcal_\rho) = \OO(g_3) \otimes \OO(f_3) \otimes T_{e_2} C_2^0 \otimes T_{e_2} C_0 = T_{g_3}C_2^2 \otimes T_{e_2}C_0 = \OO(\delta) \end{equation*}
where we have used the same telescoping trick. Thus we have proven \eqref{Odelta eqn}. This observation generalises, giving a precise relation between the intrinsically-defined line bundle $\OO(\delta)$ on $\widetilde\Dcal$ (which we use to impose the factorisation condition) and the line bundles $T_i$ pulled back from $\Ecal$. This allows us to express the Chern classes of $\OO(\delta)$ (and hence $V$) in terms of pull-backs of tautological classes from $\Ecal$ and boundary strata on $\widetilde\Dcal$, which is sufficient in order to compute integrals.
\end{example}

\subsubsection{$\Dcal$ from $\widetilde\Dcal$: second way} \label{section D from Dtilde second} 
We analyse the degeneracy locus of $\Sigma\operatorname{d}\!f$, in order to have a more direct construction of $\Dcal$ from the fibre product \eqref{IIIa fibre product}. On $\Ecal$ consider the piecewise linear function $\delta_i$ ($i=1,\ldots,r$), defined as the minimal distance from a component of (i.e. generising to) the core $\plC_0$ to a non-contracted component of the $i$-th tail $\plC_i$, and the associated line bundle $\OO(\delta_i)$.
Construct
\begin{equation*}\mathcal P=\PP_{\Ecal}\left(\bigoplus_{i=1}^r\OO(\delta_i)\right)\xrightarrow{p}\Ecal.\end{equation*}
When all fibre coordinates are non-zero, a point of $\mathcal P$ induces compatible isomorphisms $\OO(\delta_i)\cong\OO(\delta_j)$ for all $i,j$. We think of this as saying that all the $\plC_i$ are at the same distance from the core. When some of the coordinates are $0$, the corresponding tail is further from the core. This corresponds to an alignment of $\plC_1,\ldots,\plC_r$ with respect to the radius $\delta$, that is the distance of the closest of them from the core. Therefore, by the universal property,
there is a morphism $\widetilde \Dcal\to\mathcal P$ lifting $\widetilde \Dcal\to \Ecal$. 

In order to express factorisation, consider the map of vector bundles on $\mathcal P$:
\begin{equation*}  \OO_{\mathcal P}(-1)\to p^*\left(\bigoplus_{i=1}^r\OO(\delta_i)\right)\xrightarrow{\sum\operatorname{d}\!f}\ev_q^*TH.\end{equation*}
The composite, regarded as a section $s$ of $W=\OO_{\mathcal P}(1)\otimes \operatorname{ev}_q^*(TH)$, vanishes precisely where $f$ factors through the elliptic singularity determined by the point in the fibre of $p$. Generically on $\widetilde\Ecal$, this is exactly what we are after (compare with Lemma \ref{lem:generic_proj_bundle}). On the boundary, though, it may happen that a simultaneous degeneration in the base and the fibre makes this factorisation trivial, in the sense that the map $\bar f$ is constant on all the branches of the elliptic singularity. These loci may be identified explicitly: let $\Lambda=(\Lambda_B,\Lambda_F)$ with $\Lambda_B\subseteq \{\sqC_1,\ldots,\sqC_r\}$ and $\Lambda_F=\{\sqC_1,\ldots,\sqC_r\}\setminus \Lambda_B$, and let $\Xi_\Lambda$ be the locus in $\mathcal P$ where:
\begin{itemize}
 \item[(B)] $\forall i\in\Lambda_B$, the $i$-th component of the fibre product \eqref{IIIa fibre product}, namely the map $f_i\colon C_i\to(\PP^m|H)$, degenerates so that it is constant near the attaching point to the core.
 \item[(F)] $\forall j\in\Lambda_F$, the $j$-th coordinate in the fibre of $p\colon\mathcal P\to\Ecal$ vanishes.
\end{itemize}
It should be noted that the degeneracy loci of (B) are among Gathmann's comb loci \cite{Ga}, similar to the loci $\pazocal D$ that we are considering, but in genus $0$, and therefore recursively computable.
\begin{lemma} 
 The $\Xi_\Lambda$ are precisely the boundary loci on which $s$ vanishes.
\end{lemma}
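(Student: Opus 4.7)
\begin{sketch}
The plan is to verify the two inclusions separately, starting from the pointwise formula
\[
s(x, [a_1:\ldots:a_r]) = \sum_{i=1}^r a_i \cdot \operatorname{d}\!f_i(\partial_{q_i}) \in \ev_q^* TH,
\]
which follows directly from the construction of $s$ as the composite $\OO_{\mathcal P}(-1) \to p^*\bigl(\bigoplus_i \OO(\delta_i)\bigr) \to \ev_q^* TH$. Here $\operatorname{d}\!f_i(\partial_{q_i})$ lies in $\ev_q^* TH$ rather than in $\ev_q^* T\PP^m$ because the contact order at $q_i$ is strictly greater than $1$, the case of lines having been dealt with separately in \S \ref{S:lines}.

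For the inclusion $\Xi_\Lambda \subseteq \{s = 0\}$, I would fix a point $(x,[a_\bullet]) \in \Xi_\Lambda$ and observe that every summand of the formula above vanishes: those with $i \in \Lambda_F$ because $a_i = 0$ by (F), and those with $i \in \Lambda_B$ because $\operatorname{d}\!f_i(\partial_{q_i}) = 0$ by (B), $f_i$ being constant on a neighbourhood of $q_i$ in that case.

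For the reverse inclusion the plan is as follows. Suppose $\Xi \subseteq \mathcal P$ is a boundary stratum on which $s$ vanishes identically, and set $\Lambda_F := \{i : a_i \equiv 0 \text{ on } \Xi\}$ and $\Lambda_B := \{1,\ldots,r\} \setminus \Lambda_F$; I claim $\Xi \subseteq \Xi_\Lambda$. At the generic point of $\Xi$ the coordinates $\{a_i\}_{i \in \Lambda_B}$ are free homogeneous parameters on the fibre of $p$, so the identity $\sum_{i \in \Lambda_B} a_i \, \operatorname{d}\!f_i(\partial_{q_i}) = 0$ holding for all values of these parameters forces each $\operatorname{d}\!f_i(\partial_{q_i})$ to vanish individually on the image $p(\Xi) \subseteq \Ecal$, which is precisely condition (B). The degeneration of $f_i$ to a map constant near $q_i$ is a Gathmann-type comb locus, recursively computable in genus zero.

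The main obstacle will be the claim of independence of $\{a_i\}_{i \in \Lambda_B}$ on $\Xi$. To handle this I would use the standard decomposition of boundary strata in a projective bundle: the boundary of $\mathcal P$ is the union of pull-backs of boundary strata of $\Ecal$ together with vanishing loci of subsets of fibre coordinates, and every stratum is a suitable intersection of these. Consequently the generic fibre of $\Xi \to p(\Xi)$ is the projectivisation of the sub-bundle of $\bigoplus_i \OO(\delta_i)$ spanned by the non-vanishing summands, on which the $\{a_i\}_{i \in \Lambda_B}$ vary freely. This reduces the claim to the elementary fact that a linear combination $\sum a_i v_i$ vanishing for all values of free parameters $a_i$ must have each $v_i = 0$, completing the argument.
\end{sketch}
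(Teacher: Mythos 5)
The paper leaves this lemma without proof, offering only a pointer to Vakil--Zinger \cite[\S 3.2]{VZ} for an analogous statement, so there is no internal argument to compare against. Your proposal is correct and gives a clean direct argument. The pointwise formula for $s$ is exactly what the composite $\OO_{\mathcal P}(-1)\to p^*\bigl(\bigoplus_i\OO(\delta_i)\bigr)\to \ev_q^*TH$ unwinds to, the forward inclusion is immediate, and the key step in the reverse inclusion---that the fibre coordinates indexed by $\Lambda_B$ run over a dense open in the projectivisation of the surviving sub-bundle, because boundary strata of $\mathcal P$ are intersections of $p^{-1}(\partial\Ecal)$ with coordinate subspaces of the split bundle---is the right observation.

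One step is slightly elided and is worth spelling out: after deducing that $\operatorname{d}\!f_i(\partial_{q_i})$ vanishes \emph{identically} on $p(\Xi)$ for each $i\in\Lambda_B$, you assert this ``is precisely condition (B),'' and your supporting remark about comb loci explains why the condition is computable but not why the equivalence holds. The missing sentence is: $p(\Xi)$ is a boundary stratum of $\Ecal$ indexed by a combinatorial type; if that type assigns positive degree to the component of $C_i$ carrying $q_i$, then $f_i$ restricted to that component is a non-constant map from a rational curve, whose differential is generically nonzero (the tangency $m_i\geq 2$ only kills the normal component, which is why the target is $TH$ rather than $T\PP^m$), contradicting identical vanishing; hence the type must contract that component, which is exactly (B). With that inserted, the argument is complete.
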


See \cite[\S 3.2]{VZ} for an analogous phenomenon in the non-relative case. Clearly, these loci may be in excess with respect to the expected codimension of $V(s)$, that is $N-1$. These loci are logarithmic strata, so there exists a logarithmic blowup that principalises them. We claim that $\widetilde\Dcal$ is one. 
\begin{lemma}
 The pullback of $\Xi_\Lambda$ is principal in $\widetilde\Dcal$.
\end{lemma}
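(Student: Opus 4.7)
The plan is to identify $\Xi_\Lambda$ as a logarithmic stratum of $\mathcal P$ cut out by a monomial ideal, and then to argue that the logarithmic modification $\widetilde{\mathcal D}\to\mathcal P$ induced by central alignment subdivides its cone complex in such a way that this monomial ideal becomes principal.

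First I would reinterpret $\Xi_\Lambda$ logarithmically. Condition (B) on $i\in\Lambda_B$ asks that the component of $C_i$ adjacent to the attaching node $q_i$ be a rational tail contracted into $H$; this is a toroidal boundary condition on the $i$-th factor of the fibre product \eqref{IIIa fibre product}, pulled back to $\mathcal E$ and hence to $\mathcal P$. Condition (F) on $j\in\Lambda_F$ is the vanishing of the tautological coordinate $\mathcal O(\delta_j)\hookrightarrow\bigoplus_i \mathcal O(\delta_i)$ defining the projective bundle $\mathcal P\to\mathcal E$, which is a toric divisor in the fibre direction of $\mathcal P$. Both types of condition are monomial in the characteristic sheaf $\overline M_{\mathcal P}$, so $\Xi_\Lambda$ is a union of logarithmic strata, and in particular corresponds to a union of cones in $\mathcal P^{\mathrm{trop}}$ cut out by monomial vanishing.

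Next I would compare with the tropicalization of $\widetilde{\mathcal D}$. By the construction of \S\ref{subsection Dtilde from E}, $\widetilde{\mathcal D}$ arises from a subdivision of $\mathcal P^{\mathrm{trop}}$ that totally orders the tropical distances $\delta_1,\ldots,\delta_r$ relative to the common central radius $\delta$ and, simultaneously, imposes the central alignment on the tails $C_i$; in particular, for each $j$ the (F)-length $\delta_j$ becomes comparable with $\delta$, and for each $i$ the (B)-length becomes comparable with $\delta_i$ (hence with $\delta$). Consequently, on any maximal cone $\tau$ of the tropicalization of $\widetilde{\mathcal D}$, all of the monomials defining $\Xi_\Lambda$ become positive integer multiples of a single primitive generator of the dual ray.

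Finally I would read off principality: if on each local toric chart of $\widetilde{\mathcal D}$ every generator of the ideal sheaf of the pullback of $\Xi_\Lambda$ is a power of a single character, the ideal is principal and the pullback is a Cartier divisor. The main obstacle is the uniformity of the middle step across the combinatorial cases of Lemma~\ref{lemma type C0 combinatorial types} and \S\ref{S:combinatorialdescription}: one has to check type-by-type that the radial alignment on the genus-one side of the fibre product and the central alignment on the genus-zero tails are compatible with the (B)- and (F)-parameters in the way described, so that in every case the combined vanishing locus collapses onto a single monomial rather than a proper monomial ideal. This is essentially a bookkeeping exercise on the cone complex, made tractable by the explicit tropical description of $\widetilde{\mathcal D}\to\widetilde{\mathcal E}$ as a compactified torus bundle whose strata are parameterised by subsets of $\{\sqC_1,\ldots,\sqC_r\}$ moving off the circle, matching exactly the index set of the $\Lambda$.
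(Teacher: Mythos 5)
Your approach is essentially the paper's: identify the ideal of $\Xi_\Lambda$ as monomial with respect to the logarithmic structure on $\mathcal P$, with generators corresponding to the tropical lengths that measure the (B)- and (F)-degenerations, and then argue that the subdivision producing $\widetilde\Dcal$ refines the cone structure until these generators are comparable, so that the smallest one locally generates. Two things are worth flagging, however. First, the intermediate assertion that on a maximal cone all the monomials of $\Xi_\Lambda$ ``become positive integer multiples of a single primitive generator of the dual ray'' is stronger than needed and generally false: the cones in the tropicalization of $\widetilde\Dcal$ are multi-dimensional (stars of the ray of $\Dcal$), so the relevant linear functions are not proportional; what one actually needs, and what holds, is that they are totally ordered under the divisibility relation in the dual monoid, so a unique minimal one generates. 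Second, the step you defer as a ``bookkeeping exercise'' is in fact the substantive point, and the paper discharges it with a specific geometric observation rather than by a case analysis: on $\Xi_\Lambda$ the circle of the alignment becomes degenerate (it passes through no non-contracted vertex, since every candidate has been pushed off by (B) or (F)), whereas a point of $\widetilde\Dcal$ must carry a nondegenerate circle, which forces a choice of which of the lengths $\delta_i-\lambda(q_i)$ ($i\in\Lambda_B$) and $\ell_j$ ($j\in\Lambda_F$) is smallest. That forced choice is exactly what totally orders the generators and yields principality. Your plan points in the right direction but leaves this argument implicit; without it the comparability claim is unjustified.
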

\begin{proof}We analyse the ideal of $\Xi_\Lambda$: it is generated by
 \begin{enumerate}
  \item equations on $\Ecal$ cutting the locus where $C_i$ degenerates so that $f_i$ is constant on a neighbourhood of $q_i$ for $i\in \Lambda_B$: these correspond to the tropical functions $\delta_i-\lambda(q_i)$ where $\lambda$ is the distance function to the core;
  \item equations on the fibre of $p$ for the linear subspace corresponding to $i\in \Lambda_F$: tropically, these correspond to the $\ell_i$ discussed in Lemma \ref{lem:generic_proj_bundle}.
 \end{enumerate}
By construction, on $\widetilde \Dcal$ it is always possible to tell which of these tropical lengths is the shortest. Figure \ref{fig:principalisation} represents the situation: the circle we have drawn on $\Xi_\Lambda$ is degenerate - it passes through no non-contracted vertex - therefore we need to decide for a strictly larger circle in order to lift to $\widetilde D$. This shows that the ideal is principalised on $\widetilde \Dcal$.
\end{proof}

\begin{figure}[hbt]
\includegraphics{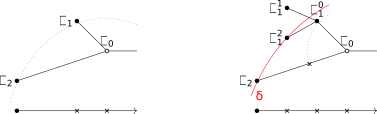}
\caption{On the left, a generic point of $\mathcal P$. On the right, a point of $\Xi$. The dotted circle is the one we draw on $\mathcal P$. The solid circle represents a radius $\delta$ enlarging the dotted one (a stratum of $\widetilde\Dcal$).}\label{fig:principalisation}
\end{figure}

Finally, on $\widetilde\Dcal$ 
there are exceptional divisors $\widetilde \Xi_\Lambda$ corresponding to $\Xi_\Lambda$ in $\mathcal P$. The $\Xi_\Lambda$ exhaust the excess-dimensional components of the vanishing of $s\in H^0(\mathcal P,W)$, so the latter induces a section $\tilde s\in H^0(\widetilde\Dcal,W(-\Sigma_\Lambda \widetilde\Xi_\Lambda))$ by pullback and twisting; the vanishing locus is dimensionally transverse to the boundary, and on a dense open coincides with the generic point of $\Dcal$, therefore $V(\tilde s)=\Dcal$.

\begin{prop}
 \[ [\Dcal]=e(W(-\Sigma_\Lambda \widetilde\Xi_\Lambda))\cap[\widetilde{\mathcal P}]\]
 where $\widetilde{\mathcal P}$ denotes the blow-up of $\mathcal P$ in the $\Xi_\Lambda$.
\end{prop}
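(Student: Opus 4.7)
The plan is to construct on $\widetilde{\mathcal P}$ a section $\tilde s$ of $W(-\sum_\Lambda \widetilde\Xi_\Lambda)$ whose scheme-theoretic vanishing equals $\Dcal$ and whose codimension matches $\operatorname{rank}(W)$; the Euler class formula for regular sections then delivers the identity.

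First, I would pull back the section $s\in H^0(\mathcal P,W)$ along the blowup morphism $\pi\colon \widetilde{\mathcal P}\to\mathcal P$. Since $s$ vanishes along each excess locus $\Xi_\Lambda$ by the preceding lemma, the pullback $\pi^{\ast}s$ vanishes along every exceptional divisor $\widetilde\Xi_\Lambda$. A local coordinate computation — based on the explicit generators of the ideal of $\Xi_\Lambda$ identified earlier, namely the base equations corresponding to $\delta_i-\lambda(q_i)$ for $i\in\Lambda_B$ and the fibre-direction coordinates $\ell_j$ for $j\in\Lambda_F$ — shows that the vanishing order is exactly one along each $\widetilde\Xi_\Lambda$. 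This produces a factorisation $\pi^{\ast}s = \tilde s\cdot\prod_\Lambda \sigma_\Lambda$, where $\sigma_\Lambda$ is a defining section of $\widetilde\Xi_\Lambda$, and hence a well-defined section
\[ \tilde s \in H^0\!\left(\widetilde{\mathcal P},\; W\!\left(-\sum_\Lambda \widetilde\Xi_\Lambda\right)\right). \]

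Next, I would identify the vanishing locus of $\tilde s$. By construction it is the strict transform of $V(s)\subset \mathcal P$; the excess-dimensional pieces have been absorbed into the twist, so what remains is the closure of the locus of maps that genuinely factor through an elliptic singularity. By Corollary~\ref{cor:maps_from_elliptic_sing} together with the fibrewise description of $\widetilde{\mathcal P}\to\widetilde\Ecal$ furnished in Lemma~\ref{lem:generic_proj_bundle}, this closure coincides with (the image of) $\Dcal$ in $\widetilde{\mathcal P}$. Logarithmic smoothness of $\VZ_{1,\alpha}(\PP^m|H,d)$ from Theorem~\ref{thm: log-smoothness} ensures that $\Dcal$ has the expected codimension $N-1=\operatorname{rank}(W)$ in $\widetilde{\mathcal P}$, so $\tilde s$ is a regular section cutting out $\Dcal$ scheme-theoretically. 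Applying the Euler class formula for regular sections then gives $[\Dcal]=e(W(-\sum_\Lambda \widetilde\Xi_\Lambda))\cap [\widetilde{\mathcal P}]$.

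The main obstacle is the local vanishing-order calculation in the first step: one must verify that the contributions to $\pi^{\ast}s$ coming from the various branches of the elliptic singularity combine cleanly into a product of divisor equations, rather than producing higher-order vanishing or unexpected cancellations between branches indexed by $\Lambda_B$ and $\Lambda_F$. This is a toric calculation in the local monoid chart of $\widetilde{\mathcal P}$, controlled by the slope data of $f$ at the nodes of the singularity and by the explicit tropical form of the generators of the ideals of the $\Xi_\Lambda$ recalled above. Once the order-one vanishing is established, the dimensional transversality asserted in the second step follows from a dimension count comparing $\dim\Dcal$, provided by Theorem~\ref{thm: log-smoothness}, with $\dim\widetilde{\mathcal P} - \operatorname{rank}(W)$.
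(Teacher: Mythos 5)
Your proposal follows the paper's strategy: pull $s$ back to the blowup, observe that it vanishes along every exceptional divisor $\widetilde\Xi_\Lambda$, twist to obtain $\tilde s$ as a section of $W(-\sum_\Lambda\widetilde\Xi_\Lambda)$, identify $V(\tilde s)$ with $\Dcal$, and conclude by the Euler class formula. The one place you diverge is in treating the order-one vanishing along each $\widetilde\Xi_\Lambda$ as a local monoid-chart computation that still needs to be carried out; the paper deduces this indirectly. After twisting, it argues that $V(\tilde s)$ is dimensionally transverse to the logarithmic boundary --- a consequence of the factorisation condition cutting out the expected codimension everywhere, going back to Theorem~\ref{thm: log-smoothness} --- so that no component of $V(\tilde s)$ can sit inside any $\widetilde\Xi_\Lambda$; combined with the observation that $V(\tilde s)$ and $\Dcal$ agree on the dense open stratum, this forces $V(\tilde s)=\Dcal$ and, a posteriori, yields the order-one vanishing you flagged as your main obstacle. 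The global dimension argument thus does the work of your proposed local calculation, sidestepping what would otherwise be a delicate case analysis across the $\Lambda_B$ and $\Lambda_F$ directions.
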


Applying \cite[Corollary 4.2.2]{FUL}, we get:
\begin{cor} In $A_*(\mathcal P)$ the following formula holds:
 \[ [\Dcal]=\sum_{\Lambda}\sum_i s_i(\Xi_\Lambda,\mathcal P)c_{N-1-i}(\ev_q^*TH\otimes\OO_\mathcal P(1))\]
\end{cor}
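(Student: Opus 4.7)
The preceding proposition gives $[\Dcal] = c_{N-1}\bigl(W(-\sum_\Lambda \widetilde\Xi_\Lambda)\bigr)\cap[\widetilde{\mathcal P}]$ in $A_*(\widetilde{\mathcal P})$, where $W = \ev_q^*TH\otimes\OO_{\mathcal P}(1)$ has rank $N-1$. The plan is to push this equality forward along the blowdown $\pi\colon\widetilde{\mathcal P}\to\mathcal P$: since $\pi$ is birational, the pushforward carries $[\Dcal]$ to its class in $A_*(\mathcal P)$, and the task reduces to computing $\pi_*$ of the twisted top Chern class on the right-hand side.

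First I would expand the twisted top Chern class using the standard identity $c_r(W\otimes L) = \sum_{j=0}^{r}c_{r-j}(W)\,c_1(L)^j$ with $r = N-1$ and $L = \OO(-\sum_\Lambda\widetilde\Xi_\Lambda)$. Each factor $c_{r-j}(W)$ is pulled back from $\mathcal P$, so the projection formula allows these classes to be pulled out of $\pi_*$; the problem then reduces to evaluating $\pi_*\bigl(c_1(\OO(-\sum_\Lambda\widetilde\Xi_\Lambda))^j\bigr)$ in $A_*(\mathcal P)$. This is precisely the content of \cite[Corollary 4.2.2]{FUL}: the pushforward of powers of the exceptional divisor class of a blowup assembles into the Segre class of the blown-up centre. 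Summing over $j$ and reindexing $i = j - 1$, the contribution from each $\Lambda$ reorganises as $\sum_i s_i(\Xi_\Lambda,\mathcal P)\,c_{N-1-i}(W)$, and summing over $\Lambda$ yields the formula upon substituting the explicit expression for $W$.

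The step I expect to be the main obstacle is controlling the cross-terms that arise from the presence of multiple exceptional divisors: expanding $c_1\bigl(\OO(-\sum_\Lambda\widetilde\Xi_\Lambda)\bigr)^j$ produces mixed monomials $[\widetilde\Xi_{\Lambda_1}]^{j_1}\cdots[\widetilde\Xi_{\Lambda_k}]^{j_k}$ involving distinct $\Lambda$, while the single-centre form of Fulton's formula only directly controls powers of a single exceptional divisor. To deal with these, I would invoke the combinatorial description of $\Xi_\Lambda$ in terms of partitions $(\Lambda_B,\Lambda_F)$ of the tails $\sqC_1,\ldots,\sqC_r$: distinct partitions impose incompatible vanishing conditions on the separate factors of the fibre product~\eqref{IIIa fibre product} and on the linear fibre coordinates of $p\colon\mathcal P\to\Ecal$, so intersections of distinct $\Xi_\Lambda$ drop strictly more in dimension than the naive expectation. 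Either by principalising the $\Xi_\Lambda$ iteratively so that their strict transforms in $\widetilde{\mathcal P}$ become mutually disjoint, or by checking that the mixed monomials sit in the wrong codimension and therefore vanish after $\pi_*$, the cross-terms are eliminated and the clean expression asserted by the corollary results.
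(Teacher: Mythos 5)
Your approach is essentially the paper's: the entire printed ``proof'' of the corollary is the single clause ``Applying \cite[Corollary~4.2.2]{FUL}, we get:'', and your expansion of the twisted Euler class, pushforward along the blowdown, and identification of $\pi_*$ of powers of the exceptional divisor with Segre classes is exactly what that citation packs into one line. Flagging the multi-centre issue is a genuine improvement on the paper's terseness: the $\Xi_\Lambda$ are not obviously disjoint (different partitions $(\Lambda_B,\Lambda_F)$ impose conditions that can hold simultaneously), so the expansion of $\bigl(\sum_\Lambda[\widetilde\Xi_\Lambda]\bigr)^j$ really does produce cross-monomials, and one of the two mechanisms you sketch (disjointness of strict transforms after iterated principalisation, or collapse of the mixed loci to excess codimension killing them under $\pi_*$) has to be supplied to justify the sum-over-$\Lambda$ form of the statement.

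Two bookkeeping points you should settle before declaring victory. First, the $j=0$ term of the expansion pushes forward to $c_{N-1}(\ev_q^*TH\otimes\OO_{\mathcal P}(1))\cap[\mathcal P]$; with your reindexing $i=j-1$ this term has nowhere to go, since $i=-1$ falls outside the sum, yet the term is not zero in general. Second, with $i=j-1$ the Chern factor becomes $c_{N-2-i}$ rather than the $c_{N-1-i}$ appearing in the statement. Both discrepancies are resolved together once you pin down the precise indexing convention the paper uses for $s_i(\Xi_\Lambda,\mathcal P)$ (Fulton's $s_k$ is graded so that $s_k\in A_{\dim\Xi_\Lambda-k}$, whereas the formula as printed reads most naturally if $s_i$ is taken to have cohomological degree $i$ in $\mathcal P$ with $s_0$ contributing the fundamental class), and once you absorb the sign $(-1)^{k-1}$ from Fulton's $s(X,Y)=\sum_{k\geq1}(-1)^{k-1}\eta_*[\widetilde X]^k$ against the $(-1)^j$ coming from $c_1\bigl(\OO(-\sum\widetilde\Xi_\Lambda)\bigr)^j$. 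These are not conceptual gaps, but as written your reindexing does not literally reproduce the displayed formula, and the discrepancy is worth chasing down, in part because it will tell you whether the statement in the paper is itself recording the $c_{N-1}$ term implicitly or simply omitting it.
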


It is possible to push the formula to $\Ecal$ and apply \cite[\S3.1]{FUL} to calculate $[\Dcal]$ in terms of:
\begin{itemize}
 \item evaluation classes, such as $\ev_q^*TH$;
 \item Chern classes of the logarithmic line bundles $\OO(\delta_i)$;
 \item Segre classes of Gathmann's comb loci indexed by $\Lambda_B$.
\end{itemize}

\begin{cor}
 After pushforward to $\Ecal$, the contribution of $\Dcal$ to the invariants can be computed in terms of tautological integrals on moduli spaces of maps with lower numerics.
\end{cor}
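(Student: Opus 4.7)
The plan is to push forward the explicit formula of the previous corollary through $p\colon\mathcal P\to\Ecal$, then further through the finite splitting morphism $\rho$ of \S\ref{subsection C0 splitting}, and verify at the end that every tautological class that survives is supported on a moduli space of strictly smaller numerical type.

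Since $\mathcal P=\PP_{\Ecal}\bigl(\bigoplus_i\OO(\delta_i)\bigr)$, the projective bundle formula turns each power of $c_1(\OO_\mathcal P(1))$, under $p_*$, into a Segre class of $\bigoplus_i\OO(\delta_i)$, i.e. a polynomial in the $c_1(\OO(\delta_i))$. For the Segre classes $s_\bullet(\Xi_\Lambda,\mathcal P)$ I would exploit the product decomposition of $\Xi_\Lambda$: the (B)-conditions cut out a subscheme in the base $\Ecal$ (a product of Gathmann-type comb loci on the genus zero factors indexed by $\Lambda_B$), while the (F)-conditions cut out a linear subspace in the fibre of $p$. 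These two sets of conditions are transverse, living in independent base versus fibre directions, so the Segre class factors as the product of the base Segre class, pulled back from $\Ecal$, and the fibre Segre class, which pushes forward cleanly via $p_*$ to a polynomial in the $c_1(\OO(\delta_j))$ for $j\in\Lambda_F$.

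After this first pushforward, $p_*[\Dcal]\in A_*(\Ecal)$ is a polynomial in (a) evaluation classes $\ev_q^\st H$, (b) first Chern classes of the logarithmic line bundles $\OO(\delta_i)$, and (c) Segre classes of Gathmann's genus zero comb loci. Applying the splitting morphism $\rho$, which presents $\Ecal$ as a finite cover of a fibre product of relative moduli spaces of smaller degree, fewer markings, or genus zero, together with a single genus one rubber factor, each class in (a) becomes an evaluation class at a gluing marking on one of the factors; each $\OO(\delta_i)$ becomes a combination of psi classes and exceptional divisors via the tautological description of \S\ref{subsection D from Dtilde}; and each Segre class in (c) is computed by Gathmann's own genus zero recursion. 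The lone genus one rubber factor is itself governed by the recursion over types I, II, III and $\dag$ developed earlier in this section.

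The main obstacle is bookkeeping rather than new geometry: one has to check, across all type $\dag$ strata and all decompositions $\Lambda=(\Lambda_B,\Lambda_F)$, that every factor appearing in the final fibre product carries strictly smaller numerical data than $\VZ_{1,\alpha}(\PP^m|H,d)$ in an appropriate partial order on $(g,\beta,\alpha,n)$, so that the induction terminates either at Gathmann's genus zero theory or at a base case of the reduced theory. This is a type-by-type verification following the templates already set up in Lemmas \ref{lem:combs} and \ref{lemma type C0 combinatorial types}.
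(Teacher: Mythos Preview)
Your proposal is correct and follows essentially the same approach as the paper: push the formula for $[\Dcal]$ from $\mathcal P$ down to $\Ecal$ via the projective bundle structure, so that what remains is a polynomial in evaluation classes, Chern classes of the $\OO(\delta_i)$, and Segre classes of Gathmann's genus zero comb loci, all of which decompose along the splitting morphism $\rho$ into tautological classes on moduli of smaller numerics. The paper is quite terse here (it simply cites \cite[\S3.1]{FUL} and lists the three resulting types of classes), and your elaboration of how the Segre class of $\Xi_\Lambda$ decouples into a base part pulled back from $\Ecal$ and a fibre part coming from the regular embedding of the sub-projective-bundle is exactly the mechanism implicit in that citation.
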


\subsection{Recursion for general $(X,Y)$}\label{section recursion for general pair} Now let $(X,Y)$ be a general smooth very ample pair. In \S \ref{subsection virtual} we studied the moduli space $\VZ_{1,\alpha}(X|Y,\beta)$ of centrally aligned maps to $(X,Y)$ and equipped it with a virtual fundamental class by diagonal pull-back from $\VZ_{1,\alpha}(\PP^m|H,d)$.

This construction of the virtual fundamental class means that once we have established a recursion formula for $(\PP^m,H)$, it pulls back to give a recursion formula for $(X,Y)$. To be more precise, Theorem \ref{theorem recursion} holds \emph{mutatis mutandis}, with the same values of $\lambda_\Dcal$. The computation of the splitting orders is exactly the same, and the recursive description of the boundary strata pulls back to give an entirely analogous description on $\VZ_{1,\alpha}(X|Y,\beta)$.

\section{Quantum Lefschetz algorithm}\label{section recursion algorithm}
\noindent Consider a smooth pair $(X,Y)$ with $Y$ very ample, and let $\mathbb{P}$ be the projective bundle $\mathbb{P}=\PP_Y(\operatorname{N}_{Y|X} \oplus\OO_Y)$. We assume that all genus zero and reduced genus one Gromov--Witten invariants of $X$ are known. From these starting data, we will apply our recursion formula to compute:
\begin{enumerate}
\item the genus one \textbf{reduced restricted absolute Gromov--Witten theory} of $Y$;
\item the genus one \textbf{reduced relative Gromov--Witten theory} of the pair $(X,Y)$;
\item the genus one \textbf{reduced rubber theory} of $\mathbb{P}$.
\end{enumerate}

\subsection{Reduced absolute, relative and rubber invariants} To be precise: by a reduced invariant of $Y$ we mean an integral over $\VZ_{1,n}(Y,\beta)$ of a product of evaluation classes and pullbacks of psi classes along morphisms forgetting subsets of the marked points:
\begin{equation*} \int_{\virt{\VZ_{1,n}(Y,\, \beta)}} \prod_{i=1}^n \fgt_{S_i}^\star \psi_i^{k_i} \cdot \ev_i^\star \gamma_i.\end{equation*}
Here $S_i \subseteq \{1,\ldots,n\}$ with $i \not\in S_i$ for all $i$; taking $S=\emptyset$ gives the ordinary  psi classes. The evaluation maps are viewed as mapping into $X$ (hence the adjective ``restricted''). Reduced relative invariants of $(X,Y)$ are defined in the same way
\begin{equation*} \int_{\virt{\VZ_{1,\alpha}(X|Y,\, \beta)}} \prod_{i=1}^n \fgt_{S_i}^\star \psi_i^{k_i} \cdot \ev_i^\star \gamma_i\end{equation*}
except now the forgetful morphism maps into a space of absolute stable maps:
\begin{equation*} \fgt_S \colon \VZ_{1,\alpha}(X|Y,\beta) \to \VZ_{1,n-\#S}(X,\beta).\end{equation*}
Note that even the case $S=\emptyset$ entails a nontrivial forgetful morphism. In particular, all the psi classes which we consider are \textbf{collapsed psi classes}, meaning that they are relative cotangent line classes for the corresponding collapsed stable map. Note that, unlike in the absolute case, in the relative case it may well be that the entire insertion is pulled back along a single forgetful map $\fgt_S$. The reduced rubber invariants of $\mathbb{P}$ are defined similarly, using collapsed psi classes.

\begin{remark}The systems of invariants defined above are equivalent to the classical systems of invariants (which do not use forgetful morphisms) by well-known topological recursion relations.\end{remark}

\begin{remark} In the following discussion we may omit the adjective ``reduced'' when discussing genus one invariants, but it is always implicit; all genus one invariants we consider will be reduced.\end{remark}

\subsection{Fictitious and true markings} Roughly speaking, we induct on the degree (meaning $Y\cdot\beta$), number of marked points and total tangency. We adopt Gathmann's point of view on the relative theory \cite{Ga}, where only a subset of the contact points with the divisor are required to be marked; this allows for ``total tangency'' strictly smaller than $Y\cdot\beta$.

In order to make sense of this in the logarithmic setting (where all contact points must be marked), we introduce the concept of \textbf{fictitious markings}. Consider a moduli space $\VZ_{1,\alpha}(X|Y,\beta)$ of reduced relative stable maps and a corresponding integrand $\gamma$. We let $F \subseteq \{1,\ldots,n\}$ be the maximal subset of marked points such that the following two conditions are satisfied:
\begin{enumerate}
\item $\alpha_i = 1$ for all $i \in F$;
\item the entire integrand $\gamma$ is pulled back along $\fgt_F$.
\end{enumerate}
This subset is uniquely determined, and its elements are referred to as \textbf{fictitious markings}. We note that $\alpha_i=1$ is not sufficient for $x_i$ to be fictitious; it must also be the case that all the insertions are pulled back along forgetting $x_i$. Markings which are not fictitious are referred to as \textbf{true markings}. When inducting on relative invariants we will always be interested in the number of true markings (as opposed to the total number of markings) and the \textbf{true tangency}
\begin{equation*} t:= \sum_{i \not\in F} \alpha_i \leq Y\cdot \beta =d\end{equation*}
as opposed to the total tangency, which is always $d$. This formalises the idea that relative invariants with non-maximal tangency $t<d$ can be obtained by adding $d-t$ fictitious markings of tangency $1$; see \cite[Lemma 1.15(i)]{Ga}.

\subsection{Structure of the recursion} Given the genus zero Gromov--Witten theory of $X$, the results of \cite{Ga} give an effective algorithm to reconstruct the genus zero theories of $Y$ and $(X,Y)$; the genus zero rubber theory of $\mathbb{P}$ is equal to the genus zero theory of $Y$ \cite{GathmannThesis}. Thus we may assume that all genus zero invariants are known. We assume in addition that we know the genus one reduced theory of $X$. The structure of the recursion is then as follows:\bigskip

\begin{algorithm}[H]
\DontPrintSemicolon
\For{$d \geq 0$}{
\For{$n \geq 0$}{
\For{$t \geq 0$\medskip}{
\textbf{\, Step 1: } Compute forgetful relative invariants of $(X,Y)$ (degree~$d$, $n+1$ true markings, true tangency $t$); see below.
}
\medskip \textbf{Step 2: } Compute absolute invariants of $Y$ (degree $d$, $n$ markings).\;
\For{$t \geq 0$\medskip}{
\textbf{\, Step 3: } Compute relative invariants of $(X,Y)$ (degree $d$, $n$ true markings, true tangency $t$).\;
}
}
\For{$n \geq 0$\medskip}{
\For{$m \geq 0$\medskip}{
\textbf{\, Step 4: } Compute rubber invariants of $\mathbb{P}$ (degree $d$, $n$ relative markings, $m$ non-relative markings).
}
}
}
\end{algorithm}\bigskip

\noindent Here each loop begins at $0$ and proceeds through successively larger values, with the invariants at each step expressed recursively in terms of invariants computed in the preceding steps and genus zero invariants. Although the loops involving $d, n$ and $m$ have infinite length, in order to compute any single invariant it is only necessarily to iterate the preceding loops for a finite amount of time.

\begin{remark}[Forgetful relative invariants] In Step 1 above, a \textbf{forgetful relative invariant} of $(X,Y)$ is by definition a reduced relative invariant for which there exists a marking $x_0$ such that all of the insertions are pulled back along $\fgt_{x_0}$. In our recursion, we first deal with the special class of forgetful relative invariants (with $n+1$ true markings), before computing the absolute invariants (with $n$ markings) and then returning to compute all of the relative invariants (with $n$ true markings). The need to separately treat a particular subclass of the relative invariants is an inescapable feature of the genus one recursion.\end{remark}

\begin{remark}[The base case]
The base terms of the recursion all have $d=0$ and are easily computed: the relative invariants of $(X,Y)$ are absolute invariants of $X$ (i.e. without specified tangency conditions), and the absolute invariants of $Y$ are given by obstruction bundle integrals over Deligne--Mumford spaces. The rubber invariants of $\mathbb{P}$ in degree zero are given by integrals over the main component of the double ramification cycle (in the rubber case there may be markings with tangency even for $d=0$, since $d$ only records the degree covering the base); this can likewise be computed as an integral over Deligne--Mumford space, by correcting the formula for the ordinary double ramification cycle \cite{Hain,JPPZ} by the obvious contribution from the boundary irreducible component (the observation that enables this in genus one is that the rubber moduli space is pure-dimensional, and the virtual class is simply the sum of the fundamental classes of the components). We will now explain how to perform each of the four inductive steps outlined above.\end{remark}

\begin{notation}Given tuples $\mathbf{a}=(a_1,\ldots,a_k)$ and $\mathbf{b}=(b_1,\ldots,b_k)$, we say that $\mathbf{a}<\mathbf{b}$ if there exists an $i \in \{1,\ldots,k\}$ such that $a_j = b_j$ for $j < i$ and $a_i < b_i$.\end{notation}

\subsection*{Step 1} Suppose we are given a forgetful relative invariant to compute. That is, we have a relative space $\VZ_{1,\alpha}(X|Y,\beta)$ of degree $d$, with $n+1$ true markings and true tangency $t$, and a marking $x_0$ such that the insertion $\gamma=\fgt_{x_0}^\star \delta$ for some class $\delta$. We assume inductively that every absolute, relative and rubber invariant with $(d^\prime,n^\prime) < (d,n)$ has been computed, as well as every forgetful relative invariant with $(d^\prime,n^\prime,t^\prime) < (d,n+1,t)$. As noted above, we may assume that $d \geq 1$ which means that there exists a true marking $x_1$ with $\alpha_1 \geq 1$. We decrease the tangency at $x_1$. Consider the vector of tangency orders
\begin{equation*} \alpha - e_1 \end{equation*}
obtained from $\alpha$ by replacing the entry $\alpha_1$ by $\alpha_1-1$. Consider then the moduli space:
\begin{equation*} \VZ_{1,(\alpha-e_1) \cup (1)}(X|Y,\beta). \end{equation*}
Denote the newly-introduced marking by $y$ and take the integrand $\tilde\gamma=\fgt_y^\st \gamma$. Applying our recursion formula to $x_1$, we obtain:
\begin{equation}\label{step 1 recursion}\left( (\alpha_1-1)\psi_1 + \ev_1^\st Y\right) \tilde\gamma \cap [\VZ_{1,(\alpha-e_1) \cup (1)}(X|Y,\beta)] = \tilde\gamma \cap [\Dcal(1)].\end{equation}
We examine the left-hand side. The class $\psi_1$ differs from $\fgt_y^\st \psi_1$ by the locus where $x_1$ and $y$ are contained on a contracted rational bubble. This locus consists of stable maps of the form \medskip

\begin{center}
\begin{tikzpicture}[scale=1]
\draw (0,0) to (2,0);
\draw [color=blue] (1,0) node[above]{\tiny$\alpha_1$};

\draw [fill=white] (0,0) circle[radius=3pt];
\draw (0,-0.1) node[below]{\small$d$};
\draw (0,0) node[above]{\small$\sqC_1$};

\draw [fill=black] (2,0) circle[radius=3pt];
\draw (2,-0.1) node[below]{\small$0$};
\draw (2,0) node[above]{\small$\sqC_0$};

\draw [->] (2,0.1) -- (4,0.1);
\draw (3.9,0.1) node[above]{\small$x_1$};
\draw [color=blue](3,0.1) node[above]{\tiny$\alpha_1-1$};

\draw [->] (2,-0.1) -- (4,-0.1);
\draw (3.9,-0.1) node[below]{\small$y$};
\draw [color=blue] (3,-0.1) node[below]{\tiny$1$};

\draw [color=blue,->] (1,-0.3) -- (1,-0.7);

\draw [color=blue,->] (0,-1) to (4,-1);
\draw [fill=blue,color=blue] (0,-1) circle[radius=3pt];
\draw[color=blue] (2,-1) node{$\times$};
\end{tikzpicture}
\end{center}
with all other marked points contained on $\sqC_1$. This is isomorphic to $\VZ_{1,\alpha}(X|Y,\beta)$ and when we restrict $\tilde\gamma$ to this locus we obtain precisely the class $\gamma$ which we started with. Thus \eqref{step 1 recursion} may be written as
\begin{equation} \label{step 1 recursion 2} (\alpha_1-1) I + \fgt_y^\st \left( (\alpha_1-1)\psi_1 + \ev_1^\st Y\right)  \tilde\gamma \cap [\VZ_{1,(\alpha-e_1)\cup(1)}(X|Y,\beta)] = \tilde\gamma \cap [\Dcal(1)]. \end{equation}
where $I$ is the invariant we are trying to compute. We now show that the other terms may be expressed in terms of invariants computed earlier in the recursion.

The second term on the left-hand side is a forgetful relative invariant with the same degree and number of true markings (since $y$ is fictitious), and strictly smaller true tangency; hence it has been computed at an earlier step in the recursion. We now examine the right-hand side of \eqref{step 1 recursion 2}. Recall that all of the genus zero data has already been computed, so we only need to focus on the genus one pieces. First consider the type I loci. The genus one piece has strictly smaller degree (and so has already been computed) except in the following situation (with some stable distribution of the remaining markings):
\begin{center}
\begin{tikzpicture}[scale=1]
\draw (0,0) to (2,0);
\draw [fill=white] (0,0) circle[radius=3pt];
\draw (0,-0.1) node[below]{\small$d$};
\draw (0,0) node[above]{\small$\sqC_1$};
\draw [fill=black] (2,0) circle[radius=3pt];
\draw (2,-0.1) node[below]{\small$0$};
\draw (2,0) node[above]{\small$\sqC_0$};
\draw [->] (2,0) -- (4,0);
\draw (3.9,0) node[right]{$x_1$};

\draw [color=blue,->] (1,-0.3) -- (1,-0.7);

\draw [color=blue,->] (0,-1) to (4,-1);
\draw [fill=blue,color=blue] (0,-1) circle[radius=3pt];
\draw[color=blue] (2,-1) node{$\times$};
\end{tikzpicture}
\end{center}
In this situation, $C_1$ contains at most $n+1$ true markings. We consider three possible cases:
\begin{enumerate}
\item If $C_1$ has $n-1$ or fewer true markings, then its contribution has already been computed earlier in the recursion.
\item If $C_1$ has exactly $n$ true markings, then $C_0$ contains exactly one true marking besides $x_1$ (which itself may be true or fictitious). In this situation we must have $y \in C_1$, since otherwise we would have a moduli space for $C_0$ given by $\ol\Mcal_{0,k}$ with $k \geq 4$, and applying the projection formula with $\fgt_y$ we would conclude that the contribution is zero. Thus we have $y \in C_1$, and so the genus one contribution is a forgetful relative invariant with $n$ true markings, and so has already been computed.
\item Finally, if $C_1$ contains exactly $n+1$ true markings, then the only additional markings on $C_0$ are fictitious, and by the same argument as in the previous paragraph there can only be one. Thus for each fictitious marked point we obtain a locus isomorphic to $\VZ_{1,\alpha}(X|Y,\beta)$ and $\tilde\gamma$ restricts to $\gamma$ here (from the point of view of computing invariants, the fictitious marked points are indistinguishable, meaning that the contributions are all the same). Thus for each fictitious marked point (of which there is at least one, namely $y$) we get a contribution of $\alpha_1 I$ to the right-hand side of \eqref{step 1 recursion 2}, where $I$ is the invariant we are trying to compute.
\end{enumerate}\medskip
The contributions of the type II loci only involve genus zero data and can be computed algorithmically following \cite{Ga}. The contributions of the type $\dag$ loci are determined by genus zero data and tautological integrals on genus one Deligne--Mumford spaces, which can be computed algorithmically using the string and dilaton equations.

It remains to consider type III loci. If the degree of the genus one piece is less than $d$ then we have a rubber invariant of strictly smaller degree. The other possibility is that the entire curve is mapped into the divisor. In this case we apply the projection formula with $\fgt_y$ to identify this with an integral over $\VZ_{1,m}(Y,\beta)$ for some (possibly large) number $m$ of marked points. But by assumption there is another marked point $x_0$ with all of the insertions pulled back along $\fgt_{x_0}$, so a further application of the projection formula shows that this contribution vanishes. To conclude, we rearrange \eqref{step 1 recursion 2} to obtain an expression of the form
\begin{equation*} \lambda I = \text{polynomial in previously computed invariants} \end{equation*}
with $\lambda$ an explicit nonzero scalar; we have determined the forgetful relative invariant $I$, completing Step 1.

\subsection*{Step 2} Consider now the absolute space $\VZ_{1,n}(Y,\beta)$ with an insertion $\gamma$, and suppose inductively that we have computed all forgetful relative invariants with $(d^\prime,n^\prime) \leq (d,n+1)$, all relative invariants and absolute invariants with $(d^\prime,n^\prime) < (d,n)$, and all rubber invariants with $d^\prime < d$. \medskip

\noindent \emph{Step 2a.} Consider the following moduli space with $n+1$ markings:
\begin{equation*} \VZ_{1,(d,0,\ldots,0)}(X|Y,\beta). \end{equation*}
Let $x_0$ denote the relative marking and consider the integrand $\tilde\gamma= \fgt_{x_0}^\st \gamma$. Now recurse at $x_1$ to obtain:
\begin{equation}\label{step 2 recursion} \ev_1^\st Y \cdot \tilde\gamma \cap [\VZ_{1,(d,0,\ldots,0)}(X|Y,\beta)] = \tilde\gamma\cap[\Dcal(1)].\end{equation}
The left-hand side is a forgetful relative invariant of degree $d$ and $\leq n+1$ true markings, and so has already been computed.

On the right-hand side, similar arguments to those in Step~1 show that the contributions of type~II and type~$\dag$ loci can be expressed in terms of invariants computed earlier in the recursion. Similarly, the contributions of type III loci have already been computed, except for the locus where the entire curve is mapped into the divisor. On this locus we apply the projection formula to $\fgt_{x_0}$ to identify the contribution with
\begin{equation*} d^2 \cdot \gamma \cap [\VZ_{1,n}(Y,\beta)] = d^2 \cdot I \end{equation*}
where $I$ is the invariant we are trying to compute. Finally, consider loci of type I. The genus one contributions from each locus have $(d^\prime,n^\prime) < (d,n)$ (and hence have already been computed) except in the following case
\begin{center}
\begin{tikzpicture}[scale=1]
\draw (0,0) to (2,0);
\draw [color=blue] (1,0) node[above]{\tiny$d$};

\draw [fill=white] (0,0) circle[radius=3pt];
\draw (0,-0.1) node[below]{\small$d$};
\draw (0,0) node[above]{\small$\sqC_1$};

\draw [fill=black] (2,0) circle[radius=3pt];
\draw (2,-0.1) node[below]{\small$0$};
\draw (2,0) node[above]{\small$\sqC_0$};

\draw [->] (2,0.1) -- (4,0.1);
\draw (3.9,0.1) node[above]{\small$x_1$};
\draw [color=blue](3,0.1) node[above]{\tiny$0$};

\draw [->] (2,-0.1) -- (4,-0.1);
\draw (3.9,-0.1) node[below]{\small$x_0$};
\draw [color=blue] (3,-0.1) node[below]{\tiny$d$};

\draw [color=blue,->] (1,-0.3) -- (1,-0.7);

\draw [color=blue,->] (0,-1) to (4,-1);
\draw [fill=blue,color=blue] (0,-1) circle[radius=3pt];
\draw[color=blue] (2,-1) node{$\times$};
\end{tikzpicture}
\end{center}
which contributes
\begin{equation*} d\cdot \gamma \cap [\VZ_{1,(d,\underbrace{0,\ldots,0}_{n-1})}(X|Y,\beta)]\end{equation*}
to the right-hand side of \eqref{step 2 recursion}. Here $\gamma$ is the insertion we started with; the difference is that we are now considering relative maps to $(X,Y)$ with maximal tangency at $x_1$, rather than absolute maps to $Y$. Putting this all together, we obtain from \eqref{step 2 recursion}
\begin{equation}\label{step 2 recursion 2} I = (-\gamma/d) \cap [\VZ_{1,(d,\underbrace{0,\ldots,0}_{n-1})}(X|Y,\beta)] + \text{polynomial in previously computed invariants} \end{equation}
where on the right-hand side there are $n-1$ non-relative markings $x_2,\ldots,x_n$, and a relative marking $x_1$.\medskip

\noindent \emph{Step 2b.} We now apply the recursion to the right-hand side of \eqref{step 2 recursion 2}. Consider the space
\begin{equation*} \VZ_{1,(d-1,1,0,\ldots,0)}(X|Y,\beta) \end{equation*}
where $x_1$ now has tangency $d-1$ and we have introduced a new marking $y$ with tangency $1$. As usual we take the insertion $\tilde\gamma=\fgt_y^\st \gamma$. Recursing at $x_1$ we obtain:
\begin{equation}\label{step 2 recursion 3} \left( (d-1)\psi_1 + \ev_1^\st Y \right)\cdot(-\tilde\gamma/d) \cap [\VZ_{1,(d-1,1,0,\ldots,0)}(X|Y,\beta)] = (-\tilde\gamma/d) \cap [\Dcal(1)].\end{equation}
The difference between $\psi_1$ and $\fgt_y^\st \psi_1$ is given by the locus where $x_1$ and $y$ belong to a collapsed rational bubble:
\begin{center}
\begin{tikzpicture}[scale=1]
\draw (0,0) to (2,0);
\draw [color=blue] (1,0) node[above]{\tiny$d$};

\draw [fill=white] (0,0) circle[radius=3pt];
\draw (0,-0.1) node[below]{\small$d$};
\draw (0,0) node[above]{\small$\sqC_1$};

\draw [fill=black] (2,0) circle[radius=3pt];
\draw (2,-0.1) node[below]{\small$0$};
\draw (2,0) node[above]{\small$\sqC_0$};

\draw [->] (2,0.1) -- (4,0.1);
\draw (3.9,0.1) node[above]{\small$y$};
\draw [color=blue](3,0.1) node[above]{\tiny$1$};

\draw [->] (2,-0.1) -- (4,-0.1);
\draw (3.9,-0.1) node[below]{\small$x_1$};
\draw [color=blue] (3,-0.1) node[below]{\tiny$d-1$};

\draw [color=blue,->] (1,-0.3) -- (1,-0.7);

\draw [color=blue,->] (0,-1) to (4,-1);
\draw [fill=blue,color=blue] (0,-1) circle[radius=3pt];
\draw[color=blue] (2,-1) node{$\times$};
\end{tikzpicture}
\end{center}
The contribution of this locus to the left-hand side of \eqref{step 2 recursion 3} is:
\begin{equation*} (d-1)\cdot(-\gamma/d) \cap [\VZ_{1,(d,\underbrace{0,\ldots,0}_{n-1})}(X|Y,\beta)].\end{equation*}
What remains on the left-hand side  of \eqref{step 2 recursion 3} is a forgetful relative invariant with degree $d$ and $\leq n+1$ true markings (here $y$ is the ``forgetful marking''); this has been computed earlier in the recursion. On the right-hand side, the type I loci are recursively known except possibly in the following special cases (with some stable distribution of the remaining non-relative markings):
\begin{center}
\begin{minipage}{0.4\textwidth}
\begin{tikzpicture}[scale=1]
\draw (0,-0.1) to (2,-0.1);
\draw [color=blue] (1,-0.1) node[below]{\tiny$d-1$};

\draw [fill=white] (0,0) circle[radius=3pt];
\draw (0,-0.1) node[below]{\small$d$};
\draw (0,0) node[above]{\small$\sqC_1$};

\draw [fill=black] (2,0) circle[radius=3pt];
\draw (2,-0.1) node[below]{\small$0$};
\draw (2,0) node[above]{\small$\sqC_0$};

\draw [->] (0,0.1) -- (1.5,0.1);
\draw (1.5,0.1) node[above]{\small$y$};
\draw [color=blue](0.75,0.1) node[above]{\tiny$1$};

\draw [->] (2,0) -- (4,0);
\draw (3.9,0) node[below]{\small$x_1$};
\draw [color=blue] (3,0) node[below]{\tiny$d-1$};

\draw [color=blue,->] (1,-0.5) -- (1,-0.9);

\draw [color=blue,->] (0,-1) to (4,-1);
\draw [fill=blue,color=blue] (0,-1) circle[radius=3pt];
\draw[color=blue] (2,-1) node{$\times$};

\draw (2,-1.3) node[below]{\small{Case 1}};
\end{tikzpicture}
\end{minipage}
\begin{minipage}{0.4\textwidth}
\begin{tikzpicture}[scale=1]
\draw (0,0) to (2,0);
\draw [color=blue] (1,0) node[above]{\tiny$d$};

\draw [fill=white] (0,0) circle[radius=3pt];
\draw (0,-0.1) node[below]{\small$d$};
\draw (0,0) node[above]{\small$\sqC_1$};

\draw [fill=black] (2,0) circle[radius=3pt];
\draw (2,-0.1) node[below]{\small$0$};
\draw (2,0) node[above]{\small$\sqC_0$};

\draw [->] (2,0.1) -- (4,0.1);
\draw (3.9,0.1) node[above]{\small$y$};
\draw [color=blue](3,0.1) node[above]{\tiny$1$};

\draw [->] (2,-0.1) -- (4,-0.1);
\draw (3.9,-0.1) node[below]{\small$x_1$};
\draw [color=blue] (3,-0.1) node[below]{\tiny$d-1$};

\draw [color=blue,->] (1,-0.3) -- (1,-0.7);

\draw [color=blue,->] (0,-1) to (4,-1);
\draw [fill=blue,color=blue] (0,-1) circle[radius=3pt];
\draw[color=blue] (2,-1) node{$\times$};


\draw (2,-1.3) node[below]{\small{Case 2}};
\end{tikzpicture}
\end{minipage}
\end{center}
We deal with these cases individually:
\begin{itemize}
\item In Case 1, the contribution from $C_1$ is a forgetful relative invariant with $\leq n$ true markings, and hence has previously been computed. 
\item In Case 2, we first note that there cannot be any more markings on $C_0$ (since otherwise applying the projection formula with $\fgt_y$ shows that the contribution vanishes). We thus obtain a single locus, which contributes precisely:
\begin{equation*} d\cdot(-\gamma/d) \cap [\VZ_{1,(d,\underbrace{0,\ldots,0}_{n-1})}(X|Y,\beta)] = -\gamma\cap[\VZ_{1,(d,\underbrace{0,\ldots,0}_{n-1})}(X|Y,\beta)].\end{equation*}
\end{itemize}
As in Step~1, we see that the type II and type $\dag$ contributions have already been computed earlier in the algorithm. The only type III locus whose contribution has not previously been computed is when the entire curve is mapped into the divisor. In this case we apply the projection formula with $\fgt_y$ to calculate its contribution as:
\begin{equation*} (-\gamma/d) \cap [\VZ_{1,n}(Y,\beta)] = -I/d.\end{equation*}
Thus \eqref{step 2 recursion 3} gives:
\begin{equation*} -(\gamma/d) \cap [\VZ_{1,(d,\underbrace{0,\ldots,0}_{n-1})}(X|Y,\beta)] = I/d + \text{polynomial in previously computed invariants}.\end{equation*}
Substituting into \eqref{step 2 recursion 2}, we obtain
\begin{equation*} I\cdot(1-d^{-1}) = \text{polynomial in previously computed invariants} \end{equation*}
which completes the recursion step as long as $d \neq 1$. But since $\VZ_{1,n}(H,1)=\emptyset$ as noted in Remark~\ref{rem: constant-maps}, it follows that $\VZ_{1,n}(Y,\beta)=\emptyset$ if $d=Y\cdot\beta=1$, so we may assume $d \geq 2$ in this step. Step 2, computing the reduced absolute invariant of $Y$ of degree $d$ and $n$ markings, is complete.

\subsection*{Step 3} Now suppose we are given a relative space $\VZ_{1,\alpha}(X|Y,\beta)$ with an insertion $\gamma$, and suppose inductively that we have computed all forgetful relative invariants with $(d^\prime,n^\prime) \leq (d,n+1)$, all absolute invariants with $(d^\prime,n^\prime) \leq (d,n)$, all relative invariants with $(d^\prime,n^\prime,t^\prime)<(d,n,t)$ and all rubber invariants with $d^\prime < d$.

Choose a true marking $x_1$ with $\alpha_1 \geq 1$ and consider the moduli space
\begin{equation*} \VZ_{1,(\alpha-e_1)\cup(1)}(X|Y,\beta) \end{equation*}
where $y$ is the newly-introduced marking. As usual consider the insertion $\tilde\gamma=\fgt_y^\star \gamma$. Recursing at $x_1$ we obtain:
\begin{equation}\label{step 3} \left( (\alpha_1-1)\psi_1 + \ev_1^\st H\right) \tilde\gamma \cap [\VZ_{1,(\alpha-e_1)\cup(1)}(X|Y,\beta)] = \tilde\gamma \cap [\Dcal(1)].\end{equation}
The left-hand side is a relative invariant with the same degree and number of true markings, but smaller true tangency; hence it has already been computed in the recursion. On the right-hand side, the type I contributions have already been computed except for those of the following form
\begin{center}
\begin{tikzpicture}[scale=1]
\draw (0,0) to (2,0);
\draw [color=blue] (1,-0.05) node[above]{\tiny$\alpha_1$};
\draw [fill=white] (0,0) circle[radius=3pt];
\draw (0,-0.1) node[below]{\small$d$};
\draw (0,0) node[above]{\small$\sqC_1$};
\draw [fill=black] (2,0) circle[radius=3pt];
\draw (2,-0.1) node[below]{\small$0$};
\draw (2,0) node[above]{\small$\sqC_0$};
\draw [->] (2,0) -- (4,0);
\draw (3.9,0) node[right]{$x_1$};
\draw [color=blue] (3,-0.05) node[above]{\tiny$\alpha_1-1$};

\draw [color=blue,->] (1,-0.3) -- (1,-0.7);

\draw [color=blue,->] (0,-1) to (4,-1);
\draw [fill=blue,color=blue] (0,-1) circle[radius=3pt];
\draw[color=blue] (2,-1) node{$\times$};
\end{tikzpicture}
\end{center}
where $C_0$ contains a single fictitious marking (if it had multiple fictitious markings then the contribution would vanish by projection formula) and all the other markings are on $C_1$. There is one such locus for each fictitious marking, and each contributes $\alpha_1 I$ where $I$ is the invariant we are trying to compute. Note that $\alpha_1 \neq 0$, and that this term appears at least once since $y$ is a fictitious marking; so we obtain a nonzero multiple of $I$.

Using the same arguments as in Step~1, we see that the type II and $\dag$ contributions have already been computed. The type III contributions are determined by lower-degree rubber invariants, except for when the whole curve is mapped into $Y$; however in this case we may apply the projection formula with $\fgt_y$ to identify the contribution with an absolute invariant of $Y$ with degree $d$ and $n$ markings, which has also been computed earlier (in Step 2). We therefore see that \eqref{step 3} may be rearranged to give:
\begin{equation*} I = \text{polynomial in previously computed invariants}.\end{equation*}
We have thus computed the reduced relative invariant of $(X,Y)$ with degree $d$, $n$ true markings and true tangency $t$, which completes Step 3.

\subsection*{Step 4} For the final step, consider a rubber space $\VZ^{\leftrightarrow}_{1,\alpha}(\mathbb P|Y_0+Y_\infty,\beta)$ with insertion $\gamma$. Suppose inductively that we have computed all absolute and relative invariants with $d^\prime \leq d$ and all rubber invariants with $(d^\prime,n^\prime,m^\prime) < (d,n,m)$.

We note the following important reduction: if there exists a relative marking $x_k$ such that all insertions are pulled back along $\fgt_{k}$, then we may apply the projection formula, together with the following identity (arising from the torsion in the Jacobian of an elliptic curve)
\begin{equation*} (\fgt_{k})_\st [\VZ^{\leftrightarrow}_{1,\alpha}(\mathbb P|Y_0+Y_\infty,\beta)] = \alpha_k^2 \cdot [\VZ_{1,n+m-1}(Y,\beta)] \end{equation*}
to identify the rubber invariant with a multiple of a reduced invariant of $Y$, which has the same degree and hence has been computed earlier.

We will deal with the general case by reducing to the one above. Consider the moduli space
\begin{equation*}\label{step 4 recursion space} \VZ^{\leftrightarrow}_{1,\alpha\cup (0)}(\mathbb P|Y_0+Y_\infty,\beta)\end{equation*}
obtained by introducing a marked point $y$ with no tangency. Let $x_1$ be a positive-tangency marking (such a marking always exists since $d \geq 2$) and let $\tilde\gamma$ be the insertion obtained from $\gamma$ by replacing $\ev_1$ and $\psi_1$ by $\ev_y$ and $\psi_y$, and then pulling back along $\fgt_1$.

We make use of a recursion formula for rubber spaces analogous to the recursion formula for relative spaces used in Steps 1--3. Following \cite{EKatz}, there is a line bundle
\begin{equation*} L_y^{\not\in \operatorname{bot}} \text{ on } \VZ^{\leftrightarrow}_{1,\alpha\cup (0)}(\mathbb P|Y_0+Y_\infty,\beta)\end{equation*}
together with a section $s_y^{\not\in\operatorname{bot}}$ whose vanishing locus consists of the locus $\Dcal(y)$ of rubber maps where $y$ is not mapped into the bottom level of the expanded target. As in \S \ref{Section Gathmann line bundle}, we can give a logarithmic interpretation of this: it corresponds to the piecewise-linear function on the tropical moduli space which associates, to every rubber tropical map, the distance between $\varphi(\sqC_y)$ and the leftmost vertex of the tropical target, where $\sqC_y$ is the vertex of the source curve containing the flag corresponding to $y$. Using this tropical description, we calculate the vanishing orders of $s_y^{\not\in\operatorname{bot}}$ along the various components of $\Dcal(y)$, and show that
\begin{equation*}\cchern_1(L_y^{\not\in\operatorname{bot}}) = \Psi_0 - \ev_y^\st Y\end{equation*}
where $\Psi_0$ is a target psi class (see \cite{EKatzLB} for an analogue in the standard theory). From this, we obtain a rubber recursion formula
\begin{equation}\label{step 4 recursion} (\Psi_0 - \ev_y^\st Y)\cdot \tilde\gamma \cap [\VZ^{\leftrightarrow}_{1,\alpha\cup(0)}(\mathbb P|Y_0+Y_\infty,\beta)] = \tilde\gamma \cap [\Dcal(y)]\end{equation}
where the fundamental class $[\Dcal(y)]$ is weighted by vanishing orders on the components.\medskip

\noindent \emph{Step 4a.} We will first show that the left-hand side can be expressed in terms of previously computed invariants. By construction the classes $\tilde\gamma$ and $\ev_y^\star Y$ are both pulled back along $\fgt_{1}$. It remains to examine $\Psi_0$. If there exists a negative-tangency marking $x_2$, then we have \cite[Construction 5.1.17]{GathmannThesis}
\begin{equation*}\label{Psi0 formula} \Psi_0 = -\alpha_2 \hat\psi_2 - \ev_2^\st Y \end{equation*}
where $\hat\psi_2$ is a \textbf{non-collapsed} psi class. (If there are no negative-tangency markings, then the construction given in \cite[\S 1.5.2]{MaulikPandharipande} shows that $\Psi_0=0$.)

Now, $\hat\psi_2 - \psi_2$ is given by the locus where $x_2$ belongs to a trivial bubble. This entails a splitting of the curve into pieces, each of which contributes a rubber integral. Typically each such piece will have $(g^\prime,d^\prime,n^\prime,m^\prime) < (1,d,n,m)$ and hence has previously been computed. The one exception is when all of the genus and degree is concentrated on the top level of the expansion, with the bottom level containing only a single non-relative marking in addition to all the negative-tangency markings. But in this case the contribution is a rubber invariant where all of the insertions are pulled back along $\fgt_1$, and hence we may apply our fundamental reduction, using the projection formula to identify the rubber invariant with (a multiple of) an absolute invariant of $Y$, which has previously been computed. We conclude that, up to previously computed terms, we may replace $\hat\psi_2$ by $\psi_2$ in the left-hand side of \eqref{step 4 recursion}.

The difference between $\psi_2$ with $\fgt_1^\st \psi_2$ is given by the locus where $x_1$ and $x_2$ belong to a collapsed rational piece. The contribution of this locus consists of rubber invariants with strictly fewer relative markings, and hence is recursively known. Up to previously computed invariants, \eqref{step 4 recursion} becomes
\begin{equation}\label{step 4 recursion 2} \fgt_1^\st (-\alpha_2\psi_2 - \ev_y^\st Y)\tilde\gamma \cap [\VZ^{\leftrightarrow}_{1,\alpha\cup(0)}(\mathbb P|Y_0+Y_\infty,\beta)] = \tilde\gamma\cap[\Dcal(y)]\end{equation}
and the left side can be expressed via previously computed invariants, by the projection formula. \medskip

\noindent \emph{Step 4b.} Let us now examine the right-hand side of \eqref{step 4 recursion 2}. The components of $\Dcal(y)$ are indexed by splittings of the curve, and the contributions can be expressed in terms of previously computed invariants unless there is a piece of the curve carrying all of the genus and degree. We therefore restrict to considering these cases. Denote subcurve carrying all of the genus and degree by $C^\prime\subseteq C$.
\begin{enumerate}
\item If $C^\prime$ is mapped to top level, then either it contains $x_1$, in which case we apply $\fgt_1$ to compute the contribution, or it does not contains $x_1$, in which case it has fewer than $n$ relative markings and is known recursively. 
\item  If on the other hand $C^\prime$ is not mapped to top level, then generically it is mapped to the bottom level (since generically the core is not contracted on this locus, so the desingularisation process does nothing). One possible contribution is given by the following locus
\begin{center}
\begin{tikzpicture}[scale=1]
\draw (0,0) to (2,0);
\draw [color=blue] (1,0) node[below]{\tiny$\alpha_1$};

\draw [fill=white] (0,0) circle[radius=3pt];
\draw (0,-0.1) node[below]{\small$d$};

\draw [->] (2,0) -- (2,1);
\draw (2,0.9) node[left]{$y$};
\draw [color=blue] (2,0.5) node[right]{\tiny$0$};

\draw [->] (2,0) -- (3,0);
\draw (3,0) node[right]{$x_1$};
\draw [color=blue] (2.5,0) node[below]{\tiny$\alpha_1$};

\draw [fill=black] (2,0) circle[radius=3pt];
\draw (2,-0.1) node[below]{\small$0$};

\draw [color=blue,->] (1,-0.5) -- (1,-0.9);

\draw [color=blue] (0,-1) to (2,-1);
\draw [fill=blue,color=blue] (0,-1) circle[radius=3pt];
\draw [fill=blue,color=blue] (2,-1) circle[radius=3pt];
\draw [color=blue,->] (2,-1) -- (3,-1);
\draw [color=blue,->] (0,-1) -- (-1,-1);
\end{tikzpicture}
\end{center}
with all other markings located on the genus one vertex. This contributes a nonzero multiple of the invariant $I$ we are trying to compute.

For the other possibilities, first note that, unless every component at the top level contains a single positive-tangency marking and a single node (together with possibly some tangency-zero markings), then $C^\prime$ has fewer than $n$ relative markings and hence the contribution is known recursively. On the other hand, if any non-relative marking other than $y$ is mapped to the top level, $C^\prime$ has $\leq n$ relative markings and less than $m$ non-relative markings, and hence again the contribution is known recursively. The only remaining possibilities are when $x_1$ is replaced by another positive-tangency marking $x_k$ in the above picture; but then the contribution can be calculated by applying the projection formula with $\fgt_{1}$.
\end{enumerate}
We conclude that the only contribution to the right-hand side of \eqref{step 4 recursion 2} which cannot be expressed in terms of previously computed invariants is a nonzero multiple of the invariant we were trying to compute. We thus compute the reduced rubber invariant of $\PP$ with degree $d$, $n$ relative markings and $m$ non-relative markings, which completes Step 4 and the recursion.

\footnotesize


\bigskip\bigskip

\noindent Luca Battistella\\
Mathematisches Institut, Ruprecht-Karls-Universit\"at Heidelberg, Im Neuenheimer Feld 205, 69120 Heidelberg, Germany \\
\href{mailto:lbattistella@mathi.uni-heidelberg.de}{lbattistella@mathi.uni-heidelberg.de}\\

\noindent Navid Nabijou \\
Department of Pure Mathematics and Mathematical Statistics, University of Cambridge, Centre for Mathematical Sciences, Wilberforce Road, Cambridge CB3 0WB, United Kingdom \\
\href{mailto:nn333@cam.ac.uk}{nn333@cam.ac.uk}\\

\noindent Dhruv Ranganathan \\
Department of Pure Mathematics and Mathematical Statistics, University of Cambridge, Centre for Mathematical Sciences, Wilberforce Road, Cambridge CB3 0WB, United Kingdom \\
\href{mailto:dr508@cam.ac.uk}{dr508@cam.ac.uk}

\end{document}